\newtheorem{theorem}{Theorem}[section]
\newtheorem{assumption}[theorem]{Assumption}
\newtheorem{definition}[theorem]{Definition}
\newtheorem{proposition}[theorem]{Proposition}
\newtheorem{prop}[theorem]{Proposition}
\newtheorem{corollary}[theorem]{Corollary}
\newtheorem{lemma}[theorem]{Lemma}
\newtheorem{remark}[theorem]{Remark}
\newtheorem{example}[theorem]{Example}
\newtheorem{examples}[theorem]{Examples}
\newtheorem{open question}[theorem]{Open Question}
\newtheorem{c/p}[theorem]{Conjecture/Proposition}
\def\Xint#1{\mathchoice
    {\XXint\displaystyle\textstyle{#1}}%
    {\XXint\textstyle\scriptstyle{#1}}%
    {\XXint\scriptstyle\scriptscriptstyle{#1}}%
    {\XXint\scriptscriptstyle\scriptscriptstyle{#1}}%
    \!\int}
    \def\XXint#1#2#3{{\setbox0=\hbox{$#1{#2#3}{\int}$}
    \vcenter{\hbox{$#2#3$}}\kern-.5\wd0}}
    \def\fint{\Xint-}
\def\vint{\mathop{\mathchoice%
 {\setbox0\hbox{$\displaystyle\intop$}\kern 0.22\wd0%
 \vcenter{\hrule width 0.6\wd0}\kern -0.82\wd0}%
 {\setbox0\hbox{$\textstyle\intop$}\kern 0.2\wd0%
 \vcenter{\hrule width 0.6\wd0}\kern -0.8\wd0}%
 {\setbox0\hbox{$\scriptstyle\intop$}\kern 0.2\wd0%
 \vcenter{\hrule width 0.6\wd0}\kern -0.8\wd0}%
 {\setbox0\hbox{$\scriptscriptstyle\intop$}\kern 0.2\wd0%
 \vcenter{\hrule width 0.6\wd0}\kern -0.8\wd0}}%
 \mathopen{}\int}
\newcommand{\ve}{\varepsilon}
\title{Orlicz-Sobolev embeddings \\ and heat kernel based Besov classes}
\author{Patricia Alonso Ruiz, Fabrice Baudoin\footnote{F.B.'s research is partially funded by Villum Fonden through a Villum Investigator grant and by grant 10.46540/4283-00175B from Independent Research Fund Denmark}}
\date{\today}
\begin{document}

\maketitle
\begin{abstract}
    This paper investigates functional inequalities involving Besov spaces and functions of bounded variation, when the underlying metric measure space displays different local and global structures. Particular focus is put on the $L^1$ theory and its applications to sets of finite perimeter and isoperimetric inequalities, which can now capture such structural differences.
\end{abstract}

\tableofcontents

\newpage

\section{Introduction}

There are instances, when the local and global structure of a space differ, affecting the space-time scaling behavior of its intrinsic diffusion process: instead of being just of power type, the scaling becomes of ``variable'' power type.  
Such spaces commonly appear in research on fractals, see e.g.~\cite{BBK06,GY24} and references therein. 
Concerning research in functional inequalities, in particular those involving Besov spaces and functions of bounded variation, previous work 
in the metric measure space  setting~\cite{ABCRST1,ABCRST3,ARB21,gao2025heatkernelbasedpenergynorms}, was not able to capture differences in local and global behaviors. The chief reason for that shortcoming was that function spaces and inequalities were restricted to using standard $L^p$-norms, which could not take on account variable exponents. In the Euclidean context, Besov spaces with variable exponents have recently been investigated in~\cite{Dri19,DZ23}. 

\medskip

The main characteristic features of the metric measure spaces $(X,d,\mu)$ motivating this project are spelled out in terms of the heat semigroup $\{P_t\}_{t\geq 0}$ associated with their intrinsic diffusion: First, the semigroup admits a heat kernel $p_t(x,y)$ satisfying upper and lower estimates that change depending on the time-space scaling. More precisely, 
\begin{equation}\label{E:HK_estimates_intro}
p_t(x,y) \asymp \frac{1}{\sigma_{\alpha_1/\beta_1,\alpha_2/\beta_2 }(t)} \exp\bigg(\!\!-c t \sigma_{\beta_2/(\beta_2-1),\beta_1/(\beta_1-1) } \bigg( \frac{d(x,y)}{t}\bigg)\bigg)
\end{equation}
for some $0<\alpha_i<\beta_i$ with $2\leq \beta_i\leq 1+\alpha_i$ and $\mu$-a.e. $x,y\in X$, where
\[
\sigma_{\nu_1,\nu_2} (r):=r^{\nu_1} 1_{[0,1]}(r) +r^{\nu_2} 1_{(1,+\infty)}(r),\qquad r>0.
\]
The expression $A\asymp B$ is meant to indicate that $cA\leq B\leq CA$ for some constants $C,c>0$ which are irrelevant. 

\medskip

A second property has a more geometric flavor and is a generalization of what is generally referred to as \emph{weak Bakry-\'Emery} condition, see e.g.~\cite{ABCRST3,GY24}. Specifically, the heat semigroup $\{P_t\}_{t\geq 0}$ is said to satisfy the property ${\rm wBE}(\kappa_1,\kappa_2)$ if the H\"older estimate
\begin{equation}\label{E:wBE_intro}
|P_tf(x)-P_tf(y)|\leq C_{\rm wBE}\frac{\sigma_{\kappa_1,\kappa_2}\big(d(x,y)\big)}{\sigma_{\frac{\kappa_1}{\beta_1},\frac{\kappa_2}{\beta_2}}(t)}\|f\|_{L^\infty}
\end{equation}
holds for some suitable indices $\kappa_1,\kappa_2$, and $\beta_1,\beta_2$ from~\eqref{E:HK_estimates_intro}. Condition~\eqref{E:wBE_intro} in particular renders the space a certain \emph{weak monotonicity} property, c.f. Assumption~\ref{A:PPI}, which will be key to obtain the main results of the paper. This condition has lately played a key role in the investigation of first-order Sobolev spaces and $p$-energies on fractals, see e.g.~\cite{Bau24,CGYZ24,KS24} and references therein.

\medskip

Back to the question concerning the ``correct'' function space to capture the different structure of the underlying fractal, we will see how Orlicz spaces provide the adequate replacement. Orlicz spaces are a generalization of $L^p$ spaces and thus natural candidates to study  embeddings in domains with properties involving different power types. Sobolev embeddings into Orlicz spaces on Euclidean domains and manifolds have been investigated in~\cite{CM23}. 

\medskip

Besides Orlicz-Sobolev inequalities, the present paper connects the latter with the semigroup based Besov spaces $\mathbf{B}^{h,p}(X)$ introduced in~\cite{ABCRST1} and their characterization as Korevaar-Schoen energy functionals 
\[
E_{p,\Psi}(f,r)=\frac{1}{\Psi(r)^p}\int_X\fint_{B(y,r)}|f(x)-f(y)|^pd\mu(y)\,d\mu(x),
\]
where $\Psi\colon [0,\infty)\to[0,\infty)$ is and increasing homeomorphism such that $\Psi(0)=0$, and $B(y,r)$ denotes a ball in $X$ of radius $r>0$ centered at $y\in X$. These functionals have been subject of extensive work in the fractal setting, see e.g.~\cite{ABCRST3,KS24,MS25,ARB25}. Of particular interest in the present paper are results in the case $p=1$, which has not yet been treated in related investigations~\cite{KS24,MS25}.

\medskip

The paper is organized as follows: Section~\ref{S:hBesov_spaces} introduces generalized heat semigroup based Besov spaces and proves in Theorem~\ref{T:pol} the relevant Orlicz-Sobolev inequality. Section~\ref{S:KS_char} deals with a Korevaar-Schoen type characterization of the aforementioned Besov spaces that will allow to show the continuity of the semigroup and a useful pseudo-Poincar\'e inequality, c.f. Theorem~\ref{T:continuity} and Proposition~\ref{P:pseudoPI} respectively, under the weak monotonicity assumption. Section~\ref{S:case_p_1} shows how the weak Bakry-\'Emery condition~\eqref{E:wBE_intro} implies the latter weak monotonicity when $p=1$, see Theorem~\ref{T:weak_monotonicity}.

\section{Generalized heat semigroup based Besov classes}\label{S:hBesov_spaces}
Throughout this section, the underlying space $(X,\mu)$ is a measurable space equipped with a $\sigma$-finite measure $\mu$, for which the measure decomposition theorem holds and for which there exists a countable family generating the $\sigma$-algebra of $X$, see e.g.~\cite[page 7]{BGL} for a description of this framework. The space of Borel functions on $X$ will be denoted by $\mathcal{B}(X)$, and the space of non-negative Borel functions by $\mathcal{B}^+(X)$.

\medskip

Further, let $(\mathcal{E},\mathcal{F}=\mathbf{dom}(\mathcal{E}))$ be a Dirichlet form on $L^2(X,\mu)$, and let $\{P_{t}\}_{t\geq 0}$ denote the associated (heat) semigroup on $L^2(X,\mu)$ associated with the Dirichlet space $(X,\mu,\mathcal{E},\mathcal{F})$. 
It will always be assumed that the semigroup $\{P_{t}\}_{t\geq 0}$ is conservative, i.e. that $P_t 1=1$. Due to this assumption, the semigroup yields a family of heat kernel measures, c.f.~\cite[Theorem 1.2.3]{BGL}. In particular, for every bounded $f\in\mathcal{B}(X)$ or $f\in\mathcal{B}(X)$, 
\begin{align}\label{heat kernel measure}
P_tf (x)=\int_X f(y) p_t(x,dy)
\end{align}
for any $t\geq 0$ and $x\in X$, where $p_t(x,dy)$ is a probability kernel for each $t>0$. 

\subsection{Preliminaries: Orlicz spaces}
Since one of the main tools in the paper are Orlicz spaces, we start by recalling their definition and some basic results, referring the reader to the textbook~\cite{RR91} for further details. 

\begin{definition}\label{D:Young_fct}
Let $\Phi\colon[0,+\infty)\to[0,+\infty]$ be a right-continuous, positive, non-decreasing function satisfying $\Phi(0)=0$ and $\lim\limits_{\tau\to\infty}\Phi(\tau)=+\infty$.
A function $\phi\colon[0,+\infty)\to[0,+\infty]$ that admits the representation
\begin{equation}\label{E:Young_rep}
\phi(x)=\int_0^x\Phi(\tau)\,d\tau
\end{equation}
is called a Young function. The function given by the Fenchel transform
\begin{equation}\label{E:def_Young_conjugate}
\psi(t):=\sup_{s>0}\{st-\phi(s)\}    
\end{equation}
is called the Young conjugate of $\phi$.
\end{definition}
From the representation~\eqref{E:Young_rep} one can infer that a Young function is convex, continuous, increasing and $\lim\limits_{x\to 0}\phi(x)=0$. In addition, any pair of Young conjugate functions $\phi,\psi$ always satisfies
\begin{equation*}\label{E:Young_conj}
s\leq\phi^{-1}(s)\psi^{-1}(s)\leq 2s\qquad\forall\,s>0.
\end{equation*}


In this exposition, the Orlicz space associated with $\phi$ is defined by duality.

\begin{definition}\label{D:Orlicz_sp}
Let $\phi$ be a Young function with Young conjugate $\psi$. The Orlicz space is defined as
\begin{equation*}\label{E:Orilcz_sp}
L^{\phi}(X,\mu):=\Big\{f\in\mathcal{B}(X)\,\colon\,\int_X |f| g  \,d\mu<\infty\text{ for all }g\in\mathcal{B}^+(X)\,\text{ with }\,\int_X\psi(g)\,d\mu<\infty\Big\},
\end{equation*}
and its corresponding norm as
\begin{equation}\label{Orlicz definition}
\|f\|_{\phi}:=\sup\Big\{\int_X|fg|\,d\mu\,\colon\, g\in \mathcal{B}(X), \, \int_X\psi(|g|)\,d\mu\leq 1\Big\}.
\end{equation}
\end{definition}
In particular, if $\phi(x)=x^p$ for some $p \ge 1$, then $L^\phi(X,\mu)=L^p(X,\mu)$.
If the function $\phi$ is doubling, i.e. there exists $B>0$ such that
\begin{equation}\label{E:Cond_Delta2}
\phi(2s)\leq B\phi(s)\qquad\forall\,s>0,
\end{equation}
then the norm $\|f\|_\phi$ is equivalent to the Luxembourg norm
\begin{equation}\label{E:Luxembourg}
\inf\Big\{s>0\,\colon\,\int_X\phi\bigg(\frac{|f|}{s}\bigg)\,d\mu\leq 1\Big\}.
\end{equation}

The Orlicz spaces involved in the results of the present paper will mainly involve doubling Young functions of the type
\begin{equation*}
    \phi(s)=s^\gamma \wedge s^{\kappa} \qquad \text{with}\qquad 1 \le \gamma \le \kappa.
\end{equation*}
We finish this paragraph by pointing out  the following known property of Orlicz norms, including the proof for the reader's convenience.

\begin{lemma}\label{P:Orlicz_properties}
Let $\phi$ be a Young function with Young conjugate $\psi$. For any Borel set $E\subset X$ with positive and finite measure,
        \begin{equation}\label{E:Norm_1E}
        \|\mathbf{1}_E\|_\phi=\mu(E)\psi^{-1} \bigg(\frac{1}{\mu(E)}\bigg).
        \end{equation}
\end{lemma}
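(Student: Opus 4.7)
The plan is to compute the supremum in the definition \eqref{Orlicz definition} directly, by combining a Jensen-type upper bound with the explicit lower bound attained by a constant multiple of $\mathbf{1}_E$.

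First I would write out the definition for $f=\mathbf{1}_E$, reducing to
\[
\|\mathbf{1}_E\|_\phi=\sup\Big\{\int_E |g|\,d\mu\,:\, g\in\mathcal{B}(X),\ \int_X\psi(|g|)\,d\mu\leq 1\Big\}.
\]
For the upper bound, I would fix an admissible $g$ and apply Jensen's inequality to the convex function $\psi$ with respect to the probability measure $\mu(E)^{-1}\mathbf{1}_E\,d\mu$:
\[
\psi\!\left(\frac{1}{\mu(E)}\int_E|g|\,d\mu\right)\leq \frac{1}{\mu(E)}\int_E \psi(|g|)\,d\mu\leq \frac{1}{\mu(E)}.
\]
Taking the (generalized) inverse $\psi^{-1}$, which is non-decreasing, then yields
\[
\int_E|g|\,d\mu\leq \mu(E)\,\psi^{-1}\!\left(\frac{1}{\mu(E)}\right),
\]
and, as this bound is uniform in $g$, so is the supremum.

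For the matching lower bound, I would choose the concrete test function $g_0:=\psi^{-1}(1/\mu(E))\,\mathbf{1}_E$. Then
\[
\int_X \psi(|g_0|)\,d\mu=\psi\!\big(\psi^{-1}(1/\mu(E))\big)\,\mu(E)\leq 1,
\]
so $g_0$ lies in the admissible class, and
\[
\int_X |\mathbf{1}_E\, g_0|\,d\mu=\mu(E)\,\psi^{-1}\!\left(\frac{1}{\mu(E)}\right),
\]
which realizes the upper bound.

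The only delicate point is the proper interpretation of $\psi^{-1}$: since $\psi$ may fail to be strictly increasing or may take the value $+\infty$, one should work with the generalized inverse $\psi^{-1}(y):=\inf\{s\geq 0:\psi(s)>y\}$. With this convention, both $\psi(\psi^{-1}(y))\leq y$ (used in the lower bound) and the monotone inversion step (used in the upper bound) remain valid, which is what makes the two bounds meet exactly.
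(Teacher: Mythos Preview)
Your proof is correct and follows essentially the same approach as the paper: Jensen's inequality on $E$ for the upper bound and the test function $g_0=\psi^{-1}(1/\mu(E))\mathbf{1}_E$ for the lower bound. The only cosmetic difference is that the paper applies Jensen to the concave function $\psi^{-1}$ rather than to the convex function $\psi$, and your added remark on the generalized inverse is a useful technical clarification that the paper omits.
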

\begin{proof}
First, applying~\eqref{Orlicz definition} with $g=\psi^{-1}\Big(\frac{1}{\mu(E)} \Big) \mathbf{1}_E$ yields
 \[
 \mu(E)\psi^{-1} \Big(\frac{1}{\mu(E)}\Big) \le  \|\mathbf{1}_E\|_\phi .
 \]
Let now $g\in \mathcal{B}^+(X), \, \int_X\psi(g)\,d\mu\leq 1$. From Jensen's inequality, it follows that
\begin{align*}
\int_E  g d\mu =\int_E \psi^{-1} (\psi (g)) d\mu \le \mu(E) \psi^{-1} \left( \frac{1}{\mu(E)} \int_X\psi(g)\,d\mu \right)\le \mu(E)\psi^{-1} \Big(\frac{1}{\mu(E)}\Big)
\end{align*}
and therefore $\mu(E)\psi^{-1} \Big(\frac{1}{\mu(E)}\Big) \ge  \|\mathbf{1}_E\|_\phi$.
\end{proof}





\subsection{Generalized heat semigroup based Besov classes}

Heat semigroup based Besov classes were introduced in \cite{ABCRST1,ABCRST3}, we generalize here their definitions. Throughout this section, we consider $1 \le p <+\infty$ and $h\colon[0,+\infty)\to [0,+\infty)$ a non-decreasing function such that $h(0)=0$. 

\begin{definition}\label{def:Besov}
The $h$-Besov seminorm is defined as
\begin{equation}\label{E:def_seminorm_ph}
\| f \|_{p,h}:= \sup_{t >0} \frac{1}{h(t)} \left( \int_X P_t (|f-f(y)|^p)(y) d\mu(y) \right)^{1/p}
\end{equation}
for any $f \in L^p(X,\mu)$.
\end{definition}

\begin{remark}
In terms of the heat kernel measure $p_t(x,dy)$ associated with the heat semigroup $\{P_t\}_{t\geq 0}$, the integral in~\eqref{E:def_seminorm_ph} reads $\displaystyle\int_X \int_X |f(x)-f(y)|^p p_t(x,dy)d\mu(y)$.   
\end{remark}

The purpose of the paper is to study functional inequalities involving the Besov type spaces
\begin{equation}\label{eq:def:Besov}
\mathbf{B}^{p,h}(X):=\{ f \in L^p(X,\mu)\, :\,  \| f \|_{p,h} <+\infty \}
\end{equation}
whose associated norm is defined as
\[
\|f \|_{\mathbf{B}^{p,h}(X)}:=\| f \|_{L^p(X,\mu)} + \| f \|_{p,h}.
\]
The same proof as~\cite[Proposition 4.14]{ABCRST1} works in this case to show:
\begin{prop}
The space $(\mathbf{B}^{p,h}(X), \| \cdot \|_{\mathbf{B}^{p,h}(X)})$ is a Banach space. 
\end{prop}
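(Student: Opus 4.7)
The plan is to verify the norm axioms and then establish completeness via a Fatou argument along an a.e.-convergent subsequence.

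The norm axioms are straightforward except for the triangle inequality on $\|\cdot\|_{p,h}$. For each fixed $t>0$, the expression $\bigl(\int_X P_t(|f-f(y)|^p)(y)\,d\mu(y)\bigr)^{1/p}$ is the $L^p$-norm of the function $(x,y)\mapsto f(x)-f(y)$ with respect to the measure $p_t(y,dx)\,d\mu(y)$. Minkowski's inequality in this $L^p$-space yields subadditivity at each $t$, which is preserved when taking the supremum in $t$. Homogeneity is immediate, and definiteness follows from the $L^p$-term of the total norm.

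For completeness, take a Cauchy sequence $\{f_n\}$ in $\mathbf{B}^{p,h}(X)$. Since $\|\cdot\|_{L^p}\le\|\cdot\|_{\mathbf{B}^{p,h}(X)}$, it is Cauchy in $L^p(X,\mu)$ and converges to some $f\in L^p(X,\mu)$. Pass to a subsequence so that $f_{n_k}\to f$ $\mu$-a.e. By symmetry of the semigroup together with $P_t 1=1$, one has
\begin{equation*}
\int_X P_t(|f_{n_k}-f|^p)(y)\,d\mu(y)=\|f_{n_k}-f\|_{L^p(X,\mu)}^p\to 0,
\end{equation*}
so along a further subsequence, $f_{n_k}(x)\to f(x)$ for $p_t(y,dx)\,d\mu(y)$-a.e. $(x,y)$. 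Combined with the $\mu$-a.e. convergence in $y$, the integrand $|(f_n-f_{n_k})(x)-(f_n-f_{n_k})(y)|^p$ converges pointwise $p_t(y,dx)\,d\mu(y)$-a.e. to $|(f_n-f)(x)-(f_n-f)(y)|^p$. Fatou's lemma then yields, for every fixed $t>0$ and $n$,
\begin{equation*}
\frac{1}{h(t)^p}\int_X P_t\bigl(|(f_n-f)-(f_n(y)-f(y))|^p\bigr)(y)\,d\mu(y)\ \le\ \sup_{m\ge n}\|f_n-f_m\|_{p,h}^p.
\end{equation*}
Since the right-hand side is independent of $t$, taking the supremum in $t$ gives $\|f_n-f\|_{p,h}\le \sup_{m\ge n}\|f_n-f_m\|_{p,h}$, which tends to $0$ as $n\to\infty$ by the Cauchy property. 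Combined with $L^p$-convergence, this shows $f_n\to f$ in $\mathbf{B}^{p,h}(X)$, and in particular $f\in\mathbf{B}^{p,h}(X)$.

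The main obstacle is the passage to the limit under the heat kernel integral: $\mu$-a.e. convergence does not automatically yield convergence $p_t(y,\cdot)$-a.e., since the probability kernel $p_t(y,\cdot)$ need not be absolutely continuous with respect to $\mu$. This is resolved by exploiting symmetry of the semigroup together with $P_t 1=1$, which reduces convergence with respect to the product measure $p_t(y,dx)\,d\mu(y)$ to ordinary $L^p(\mu)$-convergence, thereby enabling the requisite subsequence extraction.
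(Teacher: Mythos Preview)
Your argument is correct and is the standard Fatou-based route to completeness; the paper does not supply its own proof but simply cites \cite[Proposition~4.14]{ABCRST1}, where an analogous argument appears.

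One simplification worth noting, precisely because you flag the passage under the heat kernel integral as ``the main obstacle'': the identity you already use,
\[
\int_X P_t(|g|^p)\,d\mu=\int_X |g|^p\,d\mu,
\]
says exactly that the $x$-marginal of the measure $p_t(y,dx)\,d\mu(y)$ equals $\mu$. Consequently, once $f_{n_k}\to f$ $\mu$-a.e., you automatically have $f_{n_k}(x)\to f(x)$ for $p_t(y,dx)\,d\mu(y)$-a.e.\ $(x,y)$, for \emph{every} $t>0$ simultaneously. The $L^p$-to-$L^1$ reduction and the further $t$-dependent subsequence extraction are therefore unnecessary; the single $\mu$-a.e.\ convergent subsequence suffices for the Fatou step at all $t$. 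This does not affect correctness---your version works because the right-hand side is independent of the particular subsequence---but it streamlines the point you were worried about.
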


We also provide following alternative characterization of $\mathbf{B}^{p,h}(X)$.

\begin{lemma}\label{Lemma limsup debut}
\[
\mathbf{B}^{p,h}(X)
=\left\{ f \in L^p(X,\mu)\, :\,  \limsup_{t \to 0} \frac{1}{h(t)} \left( \int_X P_t (|f-f(y)|^p)(y) d\mu(y) \right)^{1/p} <+\infty \right\}.
\]
Furthermore, for any $f \in \mathbf{B}^{p,h}(X)$ and $t >0$, 
\begin{align}\label{equivalence sup-limsup}
\| f \|_{p,h} \le \frac{2}{h(t)} \| f \|_{L^p(X,\mu)} +\sup_{s\in (0,t]} \frac{1}{h(s)} \left( \int_X P_s (|f-f(y)|^p)(y) d\mu(y) \right)^{1/p}.
\end{align}
\end{lemma}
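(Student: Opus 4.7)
The first identity is a two-way containment, of which $\subseteq$ is immediate since $\limsup_{t\to 0}\leq \sup_{t>0}$. So the substance is the reverse inclusion, and the display~\eqref{equivalence sup-limsup} is what drives it. I would prove~\eqref{equivalence sup-limsup} first, and then deduce the characterization as a corollary.

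The plan for~\eqref{equivalence sup-limsup} is to split the supremum defining $\|f\|_{p,h}$ into $\sup_{s\in(0,t]}$ and $\sup_{s>t}$, and to bound the second piece by $\frac{2}{h(t)}\|f\|_{L^p}$ uniformly in the large-time regime. The key calculation is a crude triangle-inequality bound:
\[
|f(x)-f(y)|^p\leq 2^{p-1}\bigl(|f(x)|^p+|f(y)|^p\bigr),
\]
so that
\[
P_s(|f-f(y)|^p)(y)\leq 2^{p-1}\bigl(P_s(|f|^p)(y)+|f(y)|^p\bigr).
\]
Integrating in $y$ and using that the conservative Markov semigroup $\{P_s\}$ is a contraction on $L^1(X,\mu)$ (which also follows from $P_s1=1$ together with the symmetry of $P_s$ inherited from the Dirichlet form), one obtains
\[
\int_X P_s(|f-f(y)|^p)(y)\,d\mu(y)\leq 2^{p-1}\cdot 2\int_X |f|^p\,d\mu=2^p\|f\|_{L^p}^p,
\]
hence $\bigl(\int_X P_s(|f-f(y)|^p)(y)\,d\mu(y)\bigr)^{1/p}\leq 2\|f\|_{L^p}$ for every $s>0$. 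Since $h$ is non-decreasing, for $s>t$ one has $1/h(s)\leq 1/h(t)$, so
\[
\sup_{s>t}\frac{1}{h(s)}\Bigl(\int_X P_s(|f-f(y)|^p)(y)\,d\mu(y)\Bigr)^{1/p}\leq \frac{2}{h(t)}\|f\|_{L^p},
\]
and adding the remaining sup over $s\in(0,t]$ yields~\eqref{equivalence sup-limsup}.

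To deduce the characterization, I would argue as follows. The containment $\subseteq$ is trivial. Conversely, suppose the $\limsup$ at $0$ is finite. By definition of $\limsup$ as $\inf_{\delta>0}\sup_{s\in(0,\delta)}$, there exists some $t_0>0$ such that $\sup_{s\in(0,t_0]} \frac{1}{h(s)}\bigl(\int_X P_s(|f-f(y)|^p)(y)\,d\mu(y)\bigr)^{1/p}<+\infty$. Applying~\eqref{equivalence sup-limsup} at $t=t_0$ then shows $\|f\|_{p,h}<+\infty$, i.e. $f\in\mathbf{B}^{p,h}(X)$.

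The only step that requires caution is the $L^1$-contraction bound $\int P_s(|f|^p)\,d\mu\leq \int |f|^p\,d\mu$; under the standing assumptions (Dirichlet form, hence $P_s$ symmetric Markov, with $P_s 1=1$) this follows from $\int P_s g\,d\mu=\int g\,P_s 1\,d\mu=\int g\,d\mu$ applied to $g=|f|^p\in L^1$. Everything else is bookkeeping.
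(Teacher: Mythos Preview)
Your proof is correct and follows essentially the same approach as the paper: both use the elementary bound $|f(x)-f(y)|^p\le 2^{p-1}(|f(x)|^p+|f(y)|^p)$, conservativeness, and the $L^1$-contraction of $P_s$ to control the large-time piece by $\frac{2}{h(t)}\|f\|_{L^p}$, combined with monotonicity of $h$. The only difference is organizational---you prove~\eqref{equivalence sup-limsup} first and deduce the set equality, whereas the paper argues the converse inclusion directly and then notes that~\eqref{equivalence sup-limsup} follows from the same computation.
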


\begin{proof}
If $f \in \mathbf{B}^{p,h}(X)$, then we clearly have
\[
\limsup_{t \to 0} \frac{1}{h(t)} \left( \int_X P_t (|f-f(y)|^p)(y) d\mu(y) \right)^{1/p} \le \| f \|_{p,h}.
\]
Conversely, if $\limsup\limits_{t \to 0} \frac{1}{h(t)} \left( \int_X P_t (|f-f(y)|^p)(y) d\mu(y) \right)^{1/p} <+\infty$, then
there is some $\ve >0$ for which
\[
\sup_{t \in (0,\ve]} \frac{1}{h(t)} \left( \int_X P_t (|f-f(y)|^p)(y) d\mu(y) \right)^{1/p} <\infty.
\]
For $t >\ve$, applying $|f(x)-f(y)|^p \le 2^{p-1} (|f(x)|^p+|f(y)|^p)$, the fact that semigroup is conservative and the bound $\int_XP_t(|f|^p)(x)\, d\mu(x)\le \int_X|f(x)|^p\, d\mu(x)$,
we have
\begin{align*}
 \frac{1}{h(t)} \left( \int_X P_t (|f-f(y)|^p)(y) d\mu(y) \right)^{1/p} \le  \frac{2}{h(\ve)} \| f \|_{L^p(X,\mu)}.
\end{align*}
Therefore, 
\[
\sup_{t > 0} \frac{1}{h(t)} \left( \int_X P_t (|f-f(y)|^p)(y) d\mu(y) \right)^{1/p} <+\infty.
\]
and~\eqref{equivalence sup-limsup} follows.
\end{proof}

As a direct consequence of the latter,

\begin{corollary}
  Let $g:[0,+\infty) \to [0,+\infty)$ be a non decreasing function such that $g(0)=0$ and 
  \[
  a\, h(t) \le g(t) \le b\, h(t)
  \]
  for $t \in [0,t_0]$, where $a,b,t_0>0$. Then, $\mathbf{B}^{p,g}(X)=\mathbf{B}^{p,h}(X)$ with equivalent norms.
\end{corollary}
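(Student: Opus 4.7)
The plan is to reduce the statement to a small-time question via Lemma~\ref{Lemma limsup debut}, since that lemma shows membership in $\mathbf{B}^{p,h}(X)$ depends only on the behavior of the relevant quantity near $t=0$, which is precisely where the hypothesis $a\,h(t)\le g(t)\le b\,h(t)$ applies.

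\textbf{Set equality.} First I would show $\mathbf{B}^{p,g}(X)=\mathbf{B}^{p,h}(X)$ as sets. Given $f\in L^p(X,\mu)$, set
\[
A_p(f,t):=\left( \int_X P_t (|f-f(y)|^p)(y)\, d\mu(y) \right)^{1/p}.
\]
By Lemma~\ref{Lemma limsup debut}, $f\in \mathbf{B}^{p,h}(X)$ if and only if $\limsup_{t\to 0} h(t)^{-1} A_p(f,t)<+\infty$, and similarly for $g$. Since $a\,h(t)\le g(t)\le b\,h(t)$ for $t\in[0,t_0]$ and both $g,h$ are positive on $(0,t_0]$, we have $b^{-1}h(t)^{-1}\le g(t)^{-1}\le a^{-1} h(t)^{-1}$ on $(0,t_0]$, so the two limsups are finite simultaneously.

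\textbf{Norm equivalence.} For the quantitative estimate, I would apply inequality~\eqref{equivalence sup-limsup} with $t=t_0$ to the seminorm $\|f\|_{p,g}$:
\[
\|f\|_{p,g} \le \frac{2}{g(t_0)}\|f\|_{L^p(X,\mu)} + \sup_{s\in(0,t_0]} \frac{1}{g(s)}A_p(f,s) \le \frac{2}{a\,h(t_0)}\|f\|_{L^p(X,\mu)} + \frac{1}{a}\|f\|_{p,h},
\]
where the second inequality uses $g(s)\ge a\,h(s)$ on $(0,t_0]$ together with the trivial bound $\sup_{s\in(0,t_0]}\le \sup_{s>0}$. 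Hence
\[
\|f\|_{\mathbf{B}^{p,g}(X)}\le \left(1+\frac{2}{a\,h(t_0)}\right)\|f\|_{L^p(X,\mu)}+\frac{1}{a}\|f\|_{p,h}\le C\,\|f\|_{\mathbf{B}^{p,h}(X)}
\]
for some constant $C>0$ depending only on $a,h(t_0)$. The reverse inequality follows by exchanging the roles of $g$ and $h$, using the bound $h(s)\le a^{-1}g(s)$ on $(0,t_0]$ together with~\eqref{equivalence sup-limsup} applied to $\|f\|_{p,h}$.

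Since there is no serious obstacle, the only point requiring a little care is that the hypothesis controls $g$ and $h$ only on $[0,t_0]$, not for all $t>0$; this is precisely why the bound~\eqref{equivalence sup-limsup} from Lemma~\ref{Lemma limsup debut}, which replaces the supremum over $t>0$ by a supremum over $(0,t_0]$ plus an $L^p$ term, is the right tool.
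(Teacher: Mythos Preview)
Your proof is correct and follows exactly the approach the paper intends: the corollary is stated there as a direct consequence of Lemma~\ref{Lemma limsup debut} with no further proof. One minor slip in the last line: for the reverse inequality the bound you actually need is $h(s)\ge b^{-1}g(s)$ (coming from $g\le b\,h$), not $h(s)\le a^{-1}g(s)$; the symmetry argument then yields $\|f\|_{p,h}\le \tfrac{2b}{g(t_0)}\|f\|_{L^p}+b\,\|f\|_{p,g}$.
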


We finish this paragraph with a pseudo-Poincar\'e type inequality that will play an important role in the sequel.

\begin{lemma}\label{pseudo-poincare 1}
For every $f \in \mathbf{B}^{p,h}(X)$ and $t \ge 0$
\[
\| P_tf -f \|_{L^p(X,\mu)} \le h(t) \| f \|_{p,h}
\]
\end{lemma}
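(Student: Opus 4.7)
The plan is to combine the heat kernel measure representation with Jensen's inequality and then conclude via the definition of the Besov seminorm.

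First I would use conservativity to write, for each $y\in X$,
\[
P_t f(y)-f(y)=\int_X\bigl(f(x)-f(y)\bigr)\,p_t(y,dx),
\]
since $p_t(y,\cdot)$ is a probability measure on $X$ with $P_t 1(y)=1$. Then, because $|\cdot|^p$ is convex and $p_t(y,\cdot)$ is a probability measure, Jensen's inequality yields
\[
|P_tf(y)-f(y)|^p\leq \int_X |f(x)-f(y)|^p\,p_t(y,dx)=P_t\bigl(|f-f(y)|^p\bigr)(y).
\]

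Next, I would integrate this pointwise inequality with respect to $\mu$ in the variable $y$ to obtain
\[
\|P_tf-f\|_{L^p(X,\mu)}^p\leq \int_X P_t\bigl(|f-f(y)|^p\bigr)(y)\,d\mu(y).
\]
The right-hand side is exactly the quantity appearing inside the supremum in the definition~\eqref{E:def_seminorm_ph} of $\|f\|_{p,h}$ (raised to the power $p$ and multiplied by $h(t)^p$), so it is bounded by $h(t)^p\|f\|_{p,h}^p$. Taking $p$-th roots gives the claim, with the case $t=0$ being trivial since $P_0f=f$ and $h(0)=0$.

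No obstacle is expected here: the only subtle point is the justification of passing from $P_tf-f$ to the average $\int(f(x)-f(y))p_t(y,dx)$, but this is immediate from the representation~\eqref{heat kernel measure} together with $P_t 1=1$, both of which are in force by assumption.
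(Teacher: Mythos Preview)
Your proof is correct and follows essentially the same approach as the paper: use conservativity to write $P_tf(y)-f(y)$ as an average against the probability kernel $p_t(y,\cdot)$, apply Jensen (the paper phrases it as H\"older) to pass $|\cdot|^p$ inside, integrate in $y$, and bound by the definition of $\|f\|_{p,h}$.
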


\begin{proof}
Since the semigroup is conservative, H\"older's inequality yields
\begin{align*}
 \left(\int_X | P_t f (x)-f(x)|^p d\mu(x)\right)^{1/p} & = \left(\int_X | P_t (f -f(x))(x)|^p d\mu(x)\right)^{1/p} \\
 & \le \left( \int_X P_t (|f-f(x)|^p)(x) d\mu(x) \right)^{1/p}
 \le h(t)\, \| f \|_{p,h}.
\end{align*}
\end{proof}

\subsection{Isoperimetric inequalities}
Throughout this section, we assume $p=1$. As in \cite{ABCRST1,ABCRST3}, the case $p=1$ has a special geometric interpretation related to the theory of BV functions, that will be further analyzed in Section~\ref{S:case_p_1}. In particular, we generalize to our setting the definitions of \cite{ABCRST1}.

\begin{definition}\label{D:h-perimeter}
    Let $E\subset X$ be a measurable set of finite $\mu$-measure. The $h$-perimeter of $E$ is defined as
    \begin{equation*}
        {\rm Per}_h(E):=\|\mathbf{1}_E\|_{1,h}
    \end{equation*}
   
\end{definition}

In the present  context, it is possible to obtain the following general isoperimetric inequality.
\begin{prop}\label{P:general_isoperimetric}
Assume that $h$ is increasing with $\lim_{t\to +\infty} h(t)=+\infty$ and that there exists a non-decreasing function $V\colon [0,\infty)\to [0,\infty)$ satisfying $\lim\limits_{t \to +\infty} V(t)=+\infty$ such that 
\begin{equation}\label{eq:Ptinfty_to_L1}
\| P_t f \|_{L^\infty(X,\mu)} \leq \frac{1}{V(t)}\|  f \|_{L^1(X,\mu)}
\end{equation}
for every $f \in L^1(X,\mu)$ and any $t> 0$.
Then, for any set $E\subset X$ of finite $h$-perimeter,
    \begin{equation}\label{E:general_isoperimetric}
        \mu(E)^2 \, \varphi  \left( \frac{2}{\mu(E)} \right) \le {\rm Per}_h(E) ,
    \end{equation}
    where $\varphi$ is the Fenchel transform
    \[
\varphi(t)= \sup_{s>0} \left( st-\frac{s}{V\left(h^{-1}(1/s) \right)} \right).
    \]
\end{prop}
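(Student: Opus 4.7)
The approach is to exploit the indicator structure of $\mathbf{1}_E$ in the definition of $\|\mathbf{1}_E\|_{1,h}$, apply the ultracontractive hypothesis~\eqref{eq:Ptinfty_to_L1} to the resulting expression, and repackage the final inequality via the Fenchel transform $\varphi$.

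First, I would simplify $\int_X P_t(|\mathbf{1}_E - \mathbf{1}_E(y)|)(y)\,d\mu(y)$ by a case split on whether $y\in E$ or $y\in E^c$: the integrand equals $1 - P_t\mathbf{1}_E(y)$ in the former case and $P_t\mathbf{1}_E(y)$ in the latter. Combining this with the identity $\int_X P_t\mathbf{1}_E\,d\mu = \mu(E)$ (which follows from $P_t 1 = 1$ together with the symmetry of $P_t$), one obtains
\[
\int_X P_t(|\mathbf{1}_E - \mathbf{1}_E(y)|)(y)\,d\mu(y) \;=\; 2\Big(\mu(E) - \int_E P_t\mathbf{1}_E\,d\mu\Big).
\]

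Next, I would apply~\eqref{eq:Ptinfty_to_L1} to $f=\mathbf{1}_E$ to obtain $\|P_t\mathbf{1}_E\|_{L^\infty}\le \mu(E)/V(t)$ and hence $\int_E P_t\mathbf{1}_E\,d\mu\le \mu(E)^2/V(t)$. Plugging this into the preceding identity and dividing by $h(t)$ yields, for every $t>0$,
\[
\frac{2\mu(E)}{h(t)} - \frac{c\,\mu(E)^2}{h(t)\,V(t)} \;\le\; \|\mathbf{1}_E\|_{1,h} \;=\; {\rm Per}_h(E)
\]
with an explicit constant $c$. Performing the bijective substitution $s = 1/h(t)$ (valid on $(0,\infty)$ thanks to the monotonicity and range hypotheses on $h$) rewrites the left-hand side as $2s\mu(E) - c\,s\mu(E)^2/V(h^{-1}(1/s))$; taking the supremum over $s>0$ produces, by the very definition of $\varphi$ as the Fenchel conjugate of $s\mapsto s/V(h^{-1}(1/s))$, the desired expression $\mu(E)^2\,\varphi(2/\mu(E))$, once the constant $c$ is absorbed into the argument of $\varphi$.

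The main technical point I anticipate is the clean bookkeeping of the numerical constants in the substitution step, to make the supremum produce exactly $\varphi(2/\mu(E))$ rather than some rescaled variant; this may require slightly sharpening the elementary bound $\int_E P_t\mathbf{1}_E\,d\mu\le \mu(E)^2/V(t)$ by a factor-of-two. The assumptions $\lim_{t\to\infty}h(t)=\lim_{t\to\infty}V(t)=+\infty$ enter precisely to guarantee that $\varphi(2/\mu(E))$ is finite and positive for every $E$ of finite $\mu$-measure, so that the resulting isoperimetric estimate is nontrivial. Beyond this bookkeeping, no deeper analytic obstacle is expected, since the essential input—the dispersive estimate~\eqref{eq:Ptinfty_to_L1}—is built into the hypotheses.
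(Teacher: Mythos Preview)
Your approach is essentially identical to the paper's: the paper phrases the identity step as $\|P_t\mathbf{1}_E-\mathbf{1}_E\|_{L^1}=2\bigl(\mu(E)-\int_E P_t\mathbf{1}_E\,d\mu\bigr)$ and then invokes the pseudo-Poincar\'e inequality (Lemma~\ref{pseudo-poincare 1}), but for indicators that $L^1$ norm coincides with the Besov integrand $\int_X P_t(|\mathbf{1}_E-\mathbf{1}_E(y)|)(y)\,d\mu(y)$ you compute directly, so the two routes merge immediately. Your constant $c$ is exactly $2$ and no extra sharpening of $\int_E P_t\mathbf{1}_E\,d\mu\le \mu(E)^2/V(t)$ is needed; the substitution $s=1/h(t)$ then yields the stated bound just as in the paper.
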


\begin{proof}
We suitable adapt the proof of \cite[Proposition 6.5]{ABCRST1} which goes back to ideas by M. Ledoux~\cite{ledoux}. The first part of the argument is similar but we reproduce it for the sake of completeness. Since the semigroup is symmetric and conservative,
\begin{align*}
\Vert P_t \mathbf{1}_E -\mathbf 1_E  \Vert_{L^1(X,\mu)} =& \int_E (1-P_t \mathbf 1_E ) d\mu + \int_{E^c} P_t \mathbf 1_E\,  d\mu\\
 =& \int_E (1-P_t \mathbf 1_E ) d\mu + \int_E P_t \mathbf1_{E^c}\, d\mu\\
 =& 2 \left( \mu(E)- \int_E P_t \mathbf 1_E \, d\mu \right).
\end{align*}
Moreover, assumption~\eqref{eq:Ptinfty_to_L1} implies
\begin{equation*}
	\int_E P_t \mathbf 1_E \, d\mu \leq \frac{1}{V(t)} \mu(E)^2.
	\end{equation*}
Combining these equations with the pseudo-Poincar\'e inequality from Lemma~\ref{pseudo-poincare 1} we obtain, for $t>0$,
\begin{equation*}
2\mu(E) \leq  \|P_t \mathbf 1_E - \mathbf 1_E\|_{L^1(X,\mu)}+2 \int_E P_t \mathbf 1_E \, d\mu
	\leq   h(t) \ {\rm Per}_h(E) +\frac{2}{V(t)}   \mu(E)^2.
\end{equation*}
If ${\rm Per}_h(E)=0$, then letting $t\to\infty$ we also see that $\mu(E)=0$, and so without loss of generality we may
assume that ${\rm Per}_h(E)>0$, in which case we also have $\mu(E)>0$. Setting $u:=\frac{1}{h(t)}$, the previous inequality can be rewritten as
\[
\frac{2}{\mu(E)} u- \frac{u}{V\left(h^{-1}(1/u) \right)} \le \frac{{\rm Per}_h(E)}{\mu(E)^2}.
\]
Taking the supremum over $u>0$ on the left hand side finishes the proof.
\end{proof}

A second geometric quantity of interest is the $h$-Cheeger constant of $X$, which is defined as 
\begin{equation}\label{E:def_Cheeger_const}
    C_{h}:=\inf\Big\{\frac{{\rm Per}_h(E)}{\mu(E)}\colon E\subseteq X\text{ with }\mu(E)<1/2\text{ and }{\rm Per}_h(E)<\infty\Big\}
\end{equation}
assuming $\mu(X)=1$. 
When the Dirichlet form $(\mathcal{E},\mathcal{F})$ associated with the semigroup $\{P_t\}_{t\geq 0}$ admits the spectral inequality
\begin{equation}\label{E:spectral_ineq}
    \int_Xf^2d\mu-\bigg(\int_X f\,d\mu\bigg)^2\leq \frac{\mathcal{E}(f,f)}{\lambda_1},
\end{equation}
then it is possible to bound from below~\eqref{E:def_Cheeger_const} in a similar way as in~\cite[Theorem 7.1]{ABCRST1}. 

\begin{theorem}\label{T:Cheeger_const_bound}
    Assume $\mu(X)=1$. If the spectral inequality~\eqref{E:spectral_ineq} holds, then
    \begin{equation}\label{E:Cheeger_const_bound}
        C_h\geq \sup_{t>0}\frac{1-e^{-\lambda_1t}}{h(t)}.
    \end{equation}
\end{theorem}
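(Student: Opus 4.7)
The plan is to adapt the strategy already used in the proof of Proposition~\ref{P:general_isoperimetric}: express $\|P_t\mathbf 1_E-\mathbf 1_E\|_{L^1}$ in terms of the overlap $\int_E P_t \mathbf 1_E\,d\mu$, bound this overlap from above using the spectral gap assumption \eqref{E:spectral_ineq}, and bound it from below using the pseudo-Poincar\'e inequality from Lemma~\ref{pseudo-poincare 1}. The key new ingredient compared to Proposition~\ref{P:general_isoperimetric} is that instead of the $L^1\to L^\infty$ ultracontractive bound, we use variance decay coming from the Poincar\'e inequality \eqref{E:spectral_ineq}.

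More concretely, I would first record the identity
\[
\|P_t\mathbf 1_E-\mathbf 1_E\|_{L^1(X,\mu)}=2\Big(\mu(E)-\int_E P_t\mathbf 1_E\,d\mu\Big),
\]
which is established in the proof of Proposition~\ref{P:general_isoperimetric} using only symmetry and conservativeness of $\{P_t\}_{t\ge 0}$. By symmetry again,
\[
\int_E P_t\mathbf 1_E\,d\mu=\int_X (P_{t/2}\mathbf 1_E)^2\,d\mu=\operatorname{Var}_\mu(P_{t/2}\mathbf 1_E)+\mu(E)^2,
\]
since $P_t$ preserves the integral. The next step is to deduce from \eqref{E:spectral_ineq} the exponential variance decay
\[
\operatorname{Var}_\mu(P_{t/2}\mathbf 1_E)\le e^{-\lambda_1 t}\operatorname{Var}_\mu(\mathbf 1_E)=e^{-\lambda_1 t}\mu(E)(1-\mu(E)),
\]
by differentiating $t\mapsto \operatorname{Var}_\mu(P_t\mathbf 1_E)$, recognizing $-2\mathcal E(P_t\mathbf 1_E,P_t\mathbf 1_E)$, applying the spectral inequality to $P_t\mathbf 1_E$, and concluding with Gronwall.

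Combining these three observations with the pseudo-Poincar\'e bound $\|P_t\mathbf 1_E-\mathbf 1_E\|_{L^1(X,\mu)}\le h(t)\,{\rm Per}_h(E)$, a short computation yields
\[
2\mu(E)(1-\mu(E))(1-e^{-\lambda_1 t})\le h(t)\,{\rm Per}_h(E).
\]
For $E$ with $\mu(E)<1/2$ the factor $2(1-\mu(E))$ exceeds $1$, so after dividing by $\mu(E)$ one obtains ${\rm Per}_h(E)/\mu(E)\ge (1-e^{-\lambda_1 t})/h(t)$, and taking the infimum over such $E$ and then the supremum over $t>0$ gives \eqref{E:Cheeger_const_bound}.

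The only delicate point is the variance decay step, since the paper does not state it explicitly; one needs the usual argument that the Poincar\'e inequality $\operatorname{Var}_\mu(f)\le \mathcal E(f,f)/\lambda_1$, applied along the semigroup, integrates to $\operatorname{Var}_\mu(P_t f)\le e^{-2\lambda_1 t}\operatorname{Var}_\mu(f)$. Everything else is a bookkeeping exercise already templated by the proof of Proposition~\ref{P:general_isoperimetric}.
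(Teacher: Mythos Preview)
Your proposal is correct and follows essentially the same route as the paper: the identity for $\|P_t\mathbf 1_E-\mathbf 1_E\|_{L^1}$, the rewriting $\int_E P_t\mathbf 1_E\,d\mu=\mu(E)^2+\|P_{t/2}(\mathbf 1_E-\mu(E))\|_{L^2}^2$, the variance decay bound with factor $e^{-\lambda_1 t}$, and the pseudo-Poincar\'e inequality are exactly the ingredients the paper uses, combined in the same way. The only cosmetic difference is that the paper invokes the spectral theorem directly to get $\|P_{t/2}(\mathbf 1_E-\mu(E))\|_{L^2}^2\le e^{-\lambda_1 t}\|\mathbf 1_E-\mu(E)\|_{L^2}^2$, whereas you derive the equivalent variance decay via differentiation and Gronwall.
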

\begin{proof}
    Note first that, since the semigroup is symmetric and conservative
    \begin{align*}
        \|P_t\mathbf{1}_E-\mathbf{1}_E\|_{L^1(X,\mu)}
        &=\int_E(1-P_t\mathbf{1}_E)\,d\mu+\int_EP_t\mathbf{1}_{X{\setminus}E}d\mu
        =2\Big(\mu(E)-\int_EP_t\mathbf{1}_E\,d\mu\Big)\\
        &=2\big(\mu(E)-\|P_{t/2}\mathbf{1}_E\|^2_{L^2(X,\mu)}\big).
    \end{align*}
    Applying Lemma~\ref{pseudo-poincare 1} we therefore get
    \begin{equation}\label{E:Cheeger_const_01}
        \mu(E)\leq \frac{h(t)}{2}{\rm Per}_h(E)+\|P_{t/2}\mathbf{1}_E\|^2_{L^2(X,\mu)}.
    \end{equation}
    Further, by virtue of the spectral theorem
    \begin{equation*}
      \|P_{t/2}\mathbf{1}_E\|^2_{L^2(X,\mu)}=\mu(E)^2+\|P_{t/2}(\mathbf{1}_E-\mu(E))\|_{L^2(X,\mu)}^2\leq \mu(E)^2+e^{-\lambda_1t}\|\mathbf{1}_E-\mu(E)\|_{L^2(X,\mu)}^2  
    \end{equation*}
    which together with~\eqref{E:Cheeger_const_01} yields 
    \begin{equation*}
        \frac{h(t)}{2}{\rm Per}_h(E)\geq \mu(E)(1-\mu(E))(1-e^{-\lambda_1t}).
    \end{equation*}
    Thus,
    \begin{equation*}
    \frac{{\rm Per}_h(E)}{\mu(E)}\geq \frac{2}{h(t)}(1-\mu(E))(1-e^{-\lambda_1t})
    \end{equation*}
    for all $t>0$ and $E\subset X$ with $\mu(E)\leq \frac{1}{2}$, and so~\eqref{E:Cheeger_const_bound} follows.
\end{proof}

\subsection{Orlicz-Sobolev inequalities}\label{S:Orlicz-Sobolev}
The goal of this section is to prove Orlicz-Sobolev inequalities using the general results of~\cite{BCLS95}. 
To do so we will require that the seminorm $\|\cdot\|_{p,h}$ satisfies the monotonicity property~\eqref{E:PPI}. In Section~\ref{S:case_p_1} we provide a condition that guarantees the latter with $p=1$  in the metric spaces mentioned in Remark~\ref{R:examples}.

\begin{assumption}\label{A:PPI}
There exists a constant $C>0$ such that for every $f \in \mathbf{B}^{p,h}(X)$
\begin{equation}\label{E:PPI}
\| f \|_{p,h}\le C \liminf_{t\to 0^+} \frac{1}{h(t)} \left( \int_X P_t (|f-f(y)|^p)(y) d\mu(y) \right)^{1/p}.
\end{equation}
\end{assumption}

\begin{theorem}\label{T:pol}
Assume that $h$ is increasing and that $\lim_{t\to +\infty}h(t)=+\infty$ and that there exists a non-decreasing function $V\colon [0,\infty)\to [0,\infty)$ satisfying $\lim\limits_{t \to +\infty} V(t)=+\infty$ such that for every $f \in L^1(X,\mu) $
\begin{equation}\label{eq:subGauss-upper4}
\| P_t f \|_{L^\infty(X,\mu)} \leq \frac{1}{V(t)}\|  f \|_{L^1(X,\mu)}
\end{equation}
for any $t> 0$. Assume that $\phi$ is a Young function satisfying the doubling condition~\eqref{E:Cond_Delta2} and such that for every $s >0$
\begin{equation}\label{E:ass_hV}
(h\circ V^{-1}) (s)^p \le s\phi^{-1}(1/s),
\end{equation}
where we denote $V^{-1}(t)=\inf \{ s : V(s) >t\}$.
Then, for every $f \in \mathbf{B}^{p,h}(X)$,
\begin{equation}\label{E:pol}
\| f^p \|_\phi^{1/p} \le C  \| f \|_{p,h}.
\end{equation}
\end{theorem}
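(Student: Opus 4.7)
The strategy is the truncation argument of~\cite{BCLS95} for deriving a Sobolev-type bound from the combined use of a pseudo-Poincar\'e inequality (Lemma~\ref{pseudo-poincare 1}) and an ultracontractive estimate~\eqref{eq:subGauss-upper4}, here adapted to yield an Orlicz norm bound in place of a Lebesgue one. By homogeneity normalize $\|f\|_{p,h}=1$, and since $||f(x)|-|f(y)||\le |f(x)-f(y)|$ one may assume $f \ge 0$. For each $\lambda>0$, introduce the truncation $g_\lambda:=(f-\lambda)_+$. As $u \mapsto (u-\lambda)_+$ is $1$-Lipschitz, the pointwise bound $|g_\lambda(x)-g_\lambda(y)|\le |f(x)-f(y)|$ yields $\|g_\lambda\|_{p,h}\le \|f\|_{p,h}=1$, and since $f \in L^p(X,\mu)$, an elementary estimate gives $\|g_\lambda\|_{L^1(X,\mu)} \le \|f\|_{L^p(X,\mu)}^p/\lambda^{p-1}<+\infty$ for each $\lambda>0$.

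The key step is to bound the distribution function $F(\lambda):=\mu(\{f>\lambda\})$. Using the inclusion $\{f>2\lambda\}\subseteq \{|g_\lambda-P_t g_\lambda|>\lambda/2\}\cup\{P_t g_\lambda>\lambda/2\}$, Markov's inequality and Lemma~\ref{pseudo-poincare 1} applied to $g_\lambda$ give $\mu(\{|g_\lambda-P_t g_\lambda|>\lambda/2\})\le 2^p h(t)^p/\lambda^p$, while~\eqref{eq:subGauss-upper4} forces $\{P_t g_\lambda>\lambda/2\}=\emptyset$ once $V(t) \ge 2\|g_\lambda\|_{L^1(X,\mu)}/\lambda$. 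Choosing $t:=V^{-1}(2\|g_\lambda\|_{L^1(X,\mu)}/\lambda)$ and invoking~\eqref{E:ass_hV} with $s=2\|g_\lambda\|_{L^1(X,\mu)}/\lambda$ produces the key estimate
\[
F(2\lambda)\le \frac{2^{p+1}\,\|g_\lambda\|_{L^1(X,\mu)}}{\lambda^{p+1}}\,\phi^{-1}\!\left(\frac{\lambda}{2\,\|g_\lambda\|_{L^1(X,\mu)}}\right).
\]

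To pass from this bound to~\eqref{E:pol}, I will use the doubling property~\eqref{E:Cond_Delta2}, which makes $\|\cdot\|_\phi$ equivalent to the Luxemburg norm~\eqref{E:Luxembourg}, so it suffices to show $\int_X \phi(f^p/c)\,d\mu \le 1$ for $c$ proportional to $\|f\|_{p,h}^p$. The layer-cake identity converts this integral into a one-dimensional expression against $F(\lambda)$; inserting the above distribution bound and exploiting the Young-conjugate identity $\phi^{-1}(s)\psi^{-1}(s)\le 2s$ reduces the task to a closed one-dimensional inequality. The main obstacle I expect is precisely that this inequality is \emph{implicit}, since $\|g_\lambda\|_{L^1(X,\mu)}=\int_\lambda^\infty F(s)\,ds$ depends on $F$ itself. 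The plan to close it is a dyadic decomposition on levels $\lambda_k=2^k$, exploiting the monotonicity of $\lambda \mapsto \|g_\lambda\|_{L^1(X,\mu)}$ together with the doubling~\eqref{E:Cond_Delta2} of $\phi$ (which transfers to a $\Delta_2$-type bound on $\phi^{-1}$), so that the resulting geometric series sums to a finite constant $C$ depending only on $p$ and the doubling constant in~\eqref{E:Cond_Delta2}.
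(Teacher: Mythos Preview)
Your overall strategy — pseudo-Poincar\'e plus ultracontractivity, optimized in $t$, to control the distribution function — is exactly the BCLS mechanism the paper invokes, and your derivation of the weak-type bound
\[
F(2\lambda)\le \frac{2^{p+1}\,\|g_\lambda\|_{L^1}}{\lambda^{p+1}}\,\phi^{-1}\!\left(\frac{\lambda}{2\,\|g_\lambda\|_{L^1}}\right)
\]
is correct. The gap is in the final ``closing'' step, which you defer to a sketch. Two concrete points:

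\textbf{(1) Single versus double truncation.} With $g_\lambda=(f-\lambda)_+$ you only have $\|g_\lambda\|_{L^1}=\int_\lambda^\infty F(s)\,ds$, which depends on \emph{all} levels above $\lambda$. Monotonicity of $\lambda\mapsto\|g_\lambda\|_{L^1}$ and doubling of $\phi$ alone do not make the resulting dyadic recursion sum to a geometric series; the implicit system need not close. What BCLS actually uses (and what the paper applies as a black box via \cite[Corollary 10.7]{BCLS95}) is the \emph{doubly} truncated family $f_k=(f-2^k)_+\wedge 2^k$, for which one has both the pointwise localization $\|f_k\|_{L^1}\le 2^k F(2^k)$ and the summability condition $(\mathrm H_p^\rho)$:
\[
\sum_{k\in\mathbb{Z}}\|f_k\|_{p,h}^p \le C\,\|f\|_{p,h}^p.
\]
These two inputs together are what convert the weak-type bound into the strong Orlicz inequality.

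\textbf{(2) You never use Assumption~\ref{A:PPI}.} This is the crucial missing ingredient. Because $\|\cdot\|_{p,h}$ is a \emph{supremum} over $t$, the trivial Lipschitz bound gives $\|f_k\|_{p,h}\le\|f\|_{p,h}$ for each $k$, but not the summed estimate above. The paper obtains $(\mathrm H_p^\rho)$ precisely by invoking Assumption~\ref{A:PPI} to replace the $\sup$ by a $\liminf$, then using superadditivity of $\liminf$ together with the elementary pointwise inequality $\sum_k|f_k(x)-f_k(y)|^p\le 2(p+1)|f(x)-f(y)|^p$. Your proposal bypasses this entirely, which is why the recursion does not close.

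In short: keep your distribution estimate, but switch to the doubly truncated $f_k$ and verify $(\mathrm H_p^\rho)$ via Assumption~\ref{A:PPI}; this is exactly how the paper proceeds (through \cite[Corollary 10.7]{BCLS95}).
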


\begin{proof}
We apply~\cite[Corollary 10.7]{BCLS95} with $r=p$ and $M_s=P_{h^{-1}(s)}$. Note however that there is a typo in the statement of~\cite[Corollary 10.7]{BCLS95}, and that $\phi(t) \le t^{p/r} \Phi^{-1}(1/t)$ should read $\phi(t)^p \le t^{p/r} \Phi^{-1}(1/t)$. We first observe that Lemma \ref{pseudo-poincare 1} yields
\[
\| P_t f -f \|_{L^p(X,\mu)}\le h(t) \| f \|_{p,h}.
\]
Thus, it remains to prove that the seminorm $\| \cdot \|_{p,h}$ satisfies condition $(\mathrm{H}_p^\rho)$ in~\cite[Corollary 10.7]{BCLS95} for some $\rho >0$. Let us denote $f_k:=(f-2^k)_+\wedge 2^k$, $k\in\mathbb{Z}$. Verbatim to the proof of~\cite[Lemma 2.6]{ARB21} (with $\rho=2$ there) one proves that
\begin{equation*}
    \sum_{k \in \mathbb{Z}} \int_X P_t(|f_k-f_k(y)|^p)(y)\,d\mu(y)\leq 2(p+1) \int_X\int_X P_t(|f-f(y)|^p)(y)\,d\mu(y)
\end{equation*}
whence
\begin{multline*}
\liminf_{t \to 0^+} \left(\sum_{k \in \mathbb{Z}}  \frac{1}{h(t)} \int_X P_t(|f_k-f_k(y)|^p)(y)\,d\mu(y) \right) \\
\le  2(p+1)\liminf_{t \to 0^+} \frac{1}{h(t)^p} \int_X\int_X P_t(|f-f(y)|^p)(y)\,d\mu(y).
\end{multline*}
By virtue of Assumption~\ref{A:PPI} and the superadditivity of the $\liminf$ it thus holds that
\begin{align*}
\sum_{k\in\mathbb{Z}}\|f_k\|_{p,h}^p&
\leq \sum_{k \in \mathbb{Z}}  \liminf_{t \to 0^+} \frac{1}{h(t)^p} \int_X P_t(|f_k-f_k(y)|^p)(y)\,d\mu(y) \\
&\le 2(p+1) \liminf_{t \to 0^+} \frac{1}{h(t)^p} \int_X P_t(|f_k-f_k(y)|^p)(y)\,d\mu(y)\leq 2(p+1)\|f\|_{p,h}^p.
\end{align*}
\end{proof}

\begin{remark}
Theorem~\ref{T:pol} generalizes~\cite[Theorem 6.9]{ABCRST1}, for which $V(s)=Cs^\beta$, $h(s)=s^\alpha$ and $\phi(s)= s^{\frac{\beta}{\beta -p\alpha }}$.
\end{remark}

\begin{remark}
  In the case $p=1$, Theorem~\ref{T:pol} also yields an isoperimetric inequality although under stronger assumptions:  Given a Young function $\phi$ satisfying~\eqref{E:Cond_Delta2} and~\eqref{E:ass_hV}, applying Lemma~\ref{P:Orlicz_properties} and Theorem~\ref{T:pol} with $f=1_E$, where $E\subset X$ is of finite $h$-perimeter, we get
    \begin{equation*}
        {\rm Per}_h(E)\geq C\mu(E) \psi^{-1}\bigg(\frac{1}{\mu(E)}\bigg),
    \end{equation*}
    where $\psi$ is the Young conjugate of $\phi$.
\end{remark}

\section{Korevaar-Schoen-type characterization}\label{S:KS_char}
From this section on, the underlying space $(X,\mu)$ is equipped with a distance $d$. We assume that $(X,d)$ is locally compact, complete, and that $\mu$ is a Radon measure. Open metric balls will be denoted
\[
B(x,r)= \{ y \in X, d(x,y)<r \}.
\] 

We will always assume that the measure $\mu$ is doubling and positive in the sense that there exists a constant $C>0$ such that for every $x \in X, r>0$,
\begin{equation}\label{A:VD}
0< \mu (B(x,2r)) \le C \mu(B(x,r)) <\infty.\tag{$\rm VD$}
\end{equation}


The aim of this section is to obtain functional inequalities in terms of the Orlicz spaces discussed earlier. Due to the nature of the underlying spaces that will be considered, we introduce the following notation to spell out the necessary assumptions.

\medskip

\textbf{Notation:} For $\alpha,\beta > 0$, define
\[
\sigma_{\alpha,\beta} (x):=x^\alpha 1_{[0,1]}(x) +x^\beta 1_{(1,+\infty)}(x).
\]

We record several basic properties of $\sigma_{\alpha,\beta}$ that will repeatedly be used throughout the next sections.

\begin{proposition}\label{P:props_sigma}
Let $\alpha,\beta,\tilde{\alpha},\tilde{\beta},q>0$.
\begin{enumerate}[wide=0em,label={\rm (\roman*)}]
\item $\sigma_{\alpha,\beta}$ is an increasing homeomorphism with $\sigma^{-1}_{\alpha,\beta} (x)=\sigma_{\frac{1}{\alpha},\frac{1}{\beta}}(x)$.
\item $(\sigma_\alpha(x))^q=\sigma_{\alpha q}(x)$.
\item  $\sigma_{\alpha,\beta}(x) \cdot \sigma_{\tilde{\alpha},\tilde{\beta}}(x)=\sigma_{\alpha+\tilde{\alpha},\beta+\tilde{\beta}}(x)$.
\item For $0< r \le R$,
\[
\left( \frac{R}{r} \right)^{ \alpha \wedge \beta} \le \frac{\sigma_{\alpha,\beta}(R)}{\sigma_{\alpha,\beta}(r)} \le \left( \frac{R}{r} \right)^{ \alpha \vee \beta}.
\]
\end{enumerate}
\end{proposition}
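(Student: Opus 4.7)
My plan is to verify each item by working directly from the definition $\sigma_{\alpha,\beta}(x)=x^\alpha\mathbf{1}_{[0,1]}(x)+x^\beta\mathbf{1}_{(1,+\infty)}(x)$, splitting into the two regimes $x\in[0,1]$ and $x>1$ and checking consistency at the transition point $x=1$ where both branches take the value $1$.

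For (i), on each of the subintervals $[0,1]$ and $(1,\infty)$ the map is a strictly increasing power function, and the values match at $x=1$, so $\sigma_{\alpha,\beta}$ is a continuous, strictly increasing bijection of $[0,\infty)$. The inverse is computed branch by branch: if $y\in[0,1]$ solve $y=x^\alpha$ to get $x=y^{1/\alpha}$, and if $y>1$ solve $y=x^\beta$ to get $x=y^{1/\beta}$, recognizing these as the two branches of $\sigma_{1/\alpha,1/\beta}$. For (ii) (which I read as $(\sigma_{\alpha,\beta}(x))^q=\sigma_{\alpha q,\beta q}(x)$, since $\sigma_\alpha$ has not been defined), raise each branch to the power $q$. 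For (iii), multiply the formulas branch by branch and use $x^\alpha x^{\tilde\alpha}=x^{\alpha+\tilde\alpha}$ on $[0,1]$ and the analogous identity on $(1,\infty)$.

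The only item that truly requires care is (iv). The ratio $\sigma_{\alpha,\beta}(R)/\sigma_{\alpha,\beta}(r)$ has three cases depending on where $r\le R$ sit relative to $1$. If $R\le 1$ the ratio is $(R/r)^\alpha$; if $r\ge 1$ the ratio is $(R/r)^\beta$; in both cases the claim is immediate because the exponent lies in $[\alpha\wedge\beta,\alpha\vee\beta]$ and $R/r\ge 1$. The mixed case $r\le 1<R$ is the main obstacle and the genuine content of the estimate. Here the ratio equals $R^\beta/r^\alpha$, and I will handle it by rewriting this quantity in two equivalent ways,
\[
\frac{R^\beta}{r^\alpha}=R^{\beta-\alpha}\Bigl(\frac{R}{r}\Bigr)^{\!\alpha}=r^{\beta-\alpha}\Bigl(\frac{R}{r}\Bigr)^{\!\beta},
\]
and then using that $R\ge 1$ and $r\le 1$ to control the prefactors $R^{\beta-\alpha}$ and $r^{\beta-\alpha}$ depending on the sign of $\beta-\alpha$. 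Choosing the first representation with exponent $\alpha$ when $\beta\ge\alpha$ (so that $R^{\beta-\alpha}\ge 1$) gives a lower bound by $(R/r)^\alpha=(R/r)^{\alpha\wedge\beta}$; the other sign and the second representation give a lower bound by $(R/r)^\beta=(R/r)^{\alpha\wedge\beta}$. The upper bound $(R/r)^{\alpha\vee\beta}$ is obtained symmetrically by choosing the representation whose prefactor is bounded above by $1$.

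No auxiliary results are needed; the entire proof is a case analysis combined with the elementary inequalities above, and the writing should amount to a short paragraph per item.
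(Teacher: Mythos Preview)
Your proof is correct. The paper states this proposition without proof, treating the four items as basic facts about $\sigma_{\alpha,\beta}$ to be used repeatedly; your case-by-case verification, including the two algebraic rewritings $R^\beta/r^\alpha=R^{\beta-\alpha}(R/r)^\alpha=r^{\beta-\alpha}(R/r)^\beta$ to handle the mixed regime in (iv), is exactly the kind of elementary argument the authors are implicitly relying on, and your reading of (ii) as $(\sigma_{\alpha,\beta}(x))^q=\sigma_{\alpha q,\beta q}(x)$ is the intended one.
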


For the remainder of this section we add the following assumptions to those made in Section~\ref{S:hBesov_spaces}. Hereby we use the convention that $A\simeq B$ when there is a constant $C>0$ such that $C^{-1}A\leq B\leq CA$, and $A\asymp B$ if $cA\leq B\leq CA$ with possibly different and irrelevant constants $c,C>0$.

\begin{assumption}\label{A:VR_cond}
There exist $\alpha_1,\alpha_2 \ge 1$ such that 
\begin{equation}\label{E:VR_cond}
\mu(B(x,r))\simeq \sigma_{\alpha_1,\alpha_2} (r)
\end{equation}
for all $x\in X$ and $r>0$.
\end{assumption}
%

\begin{assumption}\label{A:HKE}
The heat semigroup $\{P_t\}_{t\geq 0}$ has a heat kernel satisfying the estimates
\begin{equation}\label{E:assump_HKE}
    p_t(x,y) \asymp
    \begin{cases}
    \frac{1}{\sigma_{\alpha_1/\beta_1,\alpha_2/\beta_2 }(t)}\exp\Big(\!\!-c_1\Big(\frac{d(x,y)^{\beta_1}}{t}\Big)^{\frac{1}{\beta_1-1}}\Big), \quad t < d(x,y), \\
    \frac{1}{\sigma_{\alpha_1/\beta_1,\alpha_2/\beta_2 }(t)}\exp\Big(\!\!-c_2\Big(\frac{d(x,y)^{\beta_2}}{t}\Big)^{\frac{1}{\beta_2-1}}\Big), \quad t \ge d(x,y),
    \end{cases}
\end{equation}
for $x,y\in X$, where $\alpha_i,\beta_i>0$ satisfy 
\begin{equation}\label{E:assump_exps_HKE}
2\leq \beta_i\leq 1+\alpha_i
\end{equation}
for each $i=1,2$.
\end{assumption}

Note that the estimate~\eqref{E:assump_HKE} can more concisely be written as
\begin{align}\label{concise upper bound}
 p_t(x,y) \asymp \frac{1}{\sigma_{\alpha_1/\beta_1,\alpha_2/\beta_2 }(t)} \exp\bigg(\!\!-c t \sigma_{\beta_2/(\beta_2-1),\beta_1/(\beta_1-1) } \bigg( \frac{d(x,y)}{t}\bigg)\bigg).
\end{align}

\subsection{Function space characterization}\label{SS:KS_char}

We now connect the Besov space $\mathbf{B}^{p,h}(X)$ with the Korevaar-Schoen type space arising from  Korevaar-Schoen type energies. For an increasing homeomorphism $\Psi: [0,\infty)\to [0,\infty)$ such that $\Psi(0)=0$ we define the functional
\begin{align}
E_{p,\Psi}(f,r):=\frac{1}{\Psi(r)^p}\int_X\fint_{B(y,r)}|f(x)-f(y)|^p d\mu(x)\,d\mu(y)
\end{align}
and the function space
\begin{align*}
KS^{p,\Psi}(X):=\{f\in L^p(X,\mu)\colon \sup_{r>0}E_{p,\Psi}(f,r)<\infty \}
\end{align*}
equipped with the seminorm
\begin{equation}\label{E:def_KS_snorm}
    \|f\|_{KS^{p,\Psi}}^p:=\sup_{r>0 }E_{p,\Psi}(f,r).
\end{equation}

Generalizing ideas in~\cite{KS05} and~\cite{ABCRST3} we find the correspondence between the parameters that give equivalence of the spaces under the heat kernel estimates~\eqref{E:assump_HKE}.

\begin{theorem}\label{T:KS_char}
Assume that the underying space satisfies the assumptions from Section~\ref{S:KS_char}. Let $\nu_1,\nu_2 >0$. Then, there exist constants $c, C>0$ such that
\begin{equation}\label{E:KS_char}
        c\sup_{r>0}E_{p,\sigma_{\nu_1 \beta_1,\nu_2 \beta_2 }}(f,r)\leq \|f\|_{p,\sigma_{\nu_1,\nu_2}}^p\leq C\sup_{r>0}E_{p,\sigma_{\nu_1 \beta_1,\nu_2 \beta_2 }}(f,r)
        \end{equation}
\end{theorem}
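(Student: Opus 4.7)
The correspondence that drives everything is the time/space rescaling $t \longleftrightarrow r$ given by $t = \sigma_{\beta_1,\beta_2}(r)$, equivalently $r = \sigma_{1/\beta_1,1/\beta_2}(t)$. Under this choice, Proposition~\ref{P:props_sigma} and Assumption~\ref{A:VR_cond} give the matching identities
\[
\mu(B(y,r)) \simeq \sigma_{\alpha_1,\alpha_2}(r) \simeq \sigma_{\alpha_1/\beta_1,\alpha_2/\beta_2}(t),\qquad \sigma_{\nu_1,\nu_2}(t)^p = \sigma_{\nu_1\beta_1,\nu_2\beta_2}(r)^p,
\]
so the denominators of the Besov and Korevaar--Schoen functionals are of the same order. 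Since $P_t(|f-f(y)|^p)(y) = \int_X |f(x)-f(y)|^p\, p_t(x,y)\, d\mu(x)$, both inequalities in~\eqref{E:KS_char} reduce to comparing $p_t(x,y)$ with the normalized indicator $\mu(B(y,r))^{-1}\mathbf 1_{B(y,r)}(x)$, which is exactly what the two-sided bound~\eqref{E:assump_HKE} provides.

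\textbf{Lower bound on the Besov seminorm.} Fix $r>0$ and set $t := \sigma_{\beta_1,\beta_2}(r)$, so that $r = t^{1/\beta_1}$ when $t\le 1$ and $r=t^{1/\beta_2}$ when $t>1$. For $x\in B(y,r)$ we have $d(x,y)\le r\le t^{1/\beta_i}$ in the relevant regime, hence $d(x,y)^{\beta_i}/t \le 1$ and the exponential in the lower bound of~\eqref{E:assump_HKE} is bounded below by a positive constant. Thus
\[
p_t(x,y) \gtrsim \frac{1}{\sigma_{\alpha_1/\beta_1,\alpha_2/\beta_2}(t)} \simeq \frac{1}{\mu(B(y,r))}\qquad\text{for } x\in B(y,r).
\]
Restricting the $x$-integral to $B(y,r)$ and integrating over $y\in X$ yields
\[
\int_X P_t(|f-f(y)|^p)(y)\,d\mu(y) \gtrsim \int_X \fint_{B(y,r)}|f(x)-f(y)|^p\,d\mu(x)\,d\mu(y),
\]
and dividing by $\sigma_{\nu_1,\nu_2}(t)^p=\sigma_{\nu_1\beta_1,\nu_2\beta_2}(r)^p$ gives the left inequality in~\eqref{E:KS_char} after taking the supremum over $r$.

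\textbf{Upper bound on the Besov seminorm.} Fix $t>0$, set $r_t := \sigma_{1/\beta_1,1/\beta_2}(t)$, and decompose $X$ into $B(y,r_t)$ and the annuli $A_k(y):= B(y,2^{k+1}r_t)\setminus B(y,2^k r_t)$, $k\ge 0$. On $A_k(y)$ the concise upper bound~\eqref{concise upper bound} together with Proposition~\ref{P:props_sigma}(iv) gives
\[
p_t(x,y)\lesssim \frac{1}{\mu(B(y,r_t))}\exp\!\bigl(-c\, 2^{k\gamma_k}\bigr),
\]
where $\gamma_k\ge \min\{\beta_1/(\beta_1-1),\beta_2/(\beta_2-1)\}>1$ uniformly; this is where one must separate the cases $t\le 1$, $t>1$, and $d(x,y)\lessgtr t$, but in all cases the exponent in $2^k$ is bounded below by a positive power. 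Volume doubling yields $\mu(B(y,2^{k+1}r_t))/\mu(B(y,r_t))\lesssim 2^{k(\alpha_1\vee\alpha_2)}$, so after integrating in $y$ and using Fubini on each annulus,
\[
\int_X\!\!\int_{A_k(y)}|f(x)-f(y)|^p p_t(x,y)\,d\mu(x)d\mu(y) \lesssim 2^{k(\alpha_1\vee\alpha_2)} e^{-c\, 2^{k\gamma_k}}\,\sigma_{\nu_1\beta_1,\nu_2\beta_2}(2^{k+1}r_t)^p\, \|f\|_{KS^{p,\sigma_{\nu_1\beta_1,\nu_2\beta_2}}}^p.
\]
The ball $B(y,r_t)$ itself is handled by the same estimate with $k=0$. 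Dividing by $\sigma_{\nu_1,\nu_2}(t)^p$ and using Proposition~\ref{P:props_sigma}(iv) to bound $\sigma_{\nu_1\beta_1,\nu_2\beta_2}(2^{k+1}r_t)^p/\sigma_{\nu_1\beta_1,\nu_2\beta_2}(r_t)^p \lesssim 2^{k\cdot p(\nu_1\beta_1\vee\nu_2\beta_2)}$ produces a sum of the form $\sum_{k\ge 0} 2^{kC}e^{-c2^{k\gamma_k}}$, which converges. Taking the supremum over $t$ delivers the right inequality in~\eqref{E:KS_char}.

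\textbf{Main obstacle.} The delicate point is not any single estimate but the bookkeeping across the four regimes generated by the piecewise-power structure of~\eqref{E:assump_HKE} (namely $t \lessgtr 1$ and $d(x,y)\lessgtr t$). One must verify that the scale $r_t$ (and hence the annulus radii $2^k r_t$) lands in regions where the exponent $\gamma_k$ in the Gaussian factor remains bounded away from $0$ in $k$, so that the super-polynomial decay $\exp(-c\,2^{k\gamma_k})$ beats the polynomial growth coming from both the volume doubling factor $2^{k(\alpha_1\vee\alpha_2)}$ and the scaling factor $2^{kp(\nu_1\beta_1\vee\nu_2\beta_2)}$ of $\sigma_{\nu_1\beta_1,\nu_2\beta_2}$. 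The hypothesis $\beta_i\ge 2$ in~\eqref{E:assump_exps_HKE} ensures $\beta_i/(\beta_i-1)>1$, which is precisely what makes the series summable uniformly in $t$.
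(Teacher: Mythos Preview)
Your proof is correct; the lower bound is essentially identical to the paper's, but your upper bound takes a genuinely different route and is arguably more direct.

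For the upper bound, the paper fixes an auxiliary scale $r>0$ and splits $X$ into a near part $B(y,r)$ and a far part $X\setminus B(y,r)$. The far part is controlled by $\|f\|_{L^p}^p$ via $|f(x)-f(y)|^p\le 2^{p-1}(|f(x)|^p+|f(y)|^p)$, producing a remainder $C\,\sigma_{\nu_1\beta_1,\nu_2\beta_2}(r)^{-p}\|f\|_{L^p}^p$. The near part is handled by an \emph{inward} dyadic decomposition into annuli $B(y,2^{1-k}r)\setminus B(y,2^{-k}r)$, each bounded by $E_{p,\sigma_{\nu_1\beta_1,\nu_2\beta_2}}(f,2^{1-k}r)$, and summed using a lemma from the appendix. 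The argument concludes by letting $r\to\infty$ to kill the $\|f\|_{L^p}^p$ term. By contrast, you fix $t$, set $r_t=\sigma_{1/\beta_1,1/\beta_2}(t)$, and use a single \emph{outward} dyadic decomposition into $B(y,2^{k+1}r_t)\setminus B(y,2^kr_t)$, bounding each shell by the Korevaar--Schoen energy at the \emph{larger} scale $2^{k+1}r_t$ and relying on the super-polynomial factor $\exp(-c\,2^{k\gamma})$ to sum. Your approach avoids the $L^p$ remainder and the limit $r\to\infty$ altogether, at the price of requiring control of $E_{p,\sigma}(f,\cdot)$ at arbitrarily large scales---which is exactly what $\sup_{r>0}E_{p,\sigma}(f,r)$ provides. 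The paper's route, on the other hand, yields the intermediate inequality
\[
\|f\|_{p,\sigma_{\nu_1,\nu_2}}^p \le c\sup_{s\in(0,r]}E_{p,\sigma_{\nu_1\beta_1,\nu_2\beta_2}}(f,s)+\frac{C}{\sigma_{\nu_1\beta_1,\nu_2\beta_2}(r)^p}\|f\|_{L^p}^p,
\]
which is occasionally useful in its own right. One small remark: for the summability of $\sum_k 2^{kC}e^{-c\,2^{k\gamma}}$ any $\gamma>0$ suffices, and $\beta_i/(\beta_i-1)>1$ holds for all $\beta_i>1$; the hypothesis $\beta_i\ge 2$ is not actually needed here.
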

\begin{proof}
To prove the first inequality in~\eqref{E:KS_char}, Assumption~\ref{A:HKE} yields for $r,t>0$
\begin{align*}
    &\frac{1}{\sigma_{\nu_1,\nu_2}(t)^p}\int_XP_t(|f-f(y)|^p)(y)\,d\mu(y)\\
    =&\frac{1}{\sigma_{\nu_1,\nu_2}(t)^p}\int_X\int_X p_t(x,y)|f(x)-f(y)|^pd\mu(y)\,d\mu(x)\\
    \ge &\frac{1}{\sigma_{\nu_1,\nu_2}(t)^p}\int_X\int_{B(x,r)} p_t(x,y)|f(x)-f(y)|^pd\mu(y)\,d\mu(x) \\
    \ge & \frac{c_1}{\sigma_{\nu_1,\nu_2}(t)^p\sigma_{\alpha_1/\beta_1,\alpha_2/\beta_2 }(t)}\exp\Big(\!\!-c_2 t \sigma_{\beta_2/(\beta_2-1),\beta_1/(\beta_1-1) } \left( \frac{r}{t}\right)\Big)\int_X\int_{B(x,r)} |f(x)-f(y)|^pd\mu(y)\,d\mu(x)
\end{align*}

Choosing $r=\sigma_{1/\beta_1,1/\beta_2} (t)$ we obtain
\begin{align*}
 &    \frac{1}{\sigma_{\nu_1,\nu_2}(t)^p}\int_XP_t(|f-f(y)|^p)(y)\,d\mu(y)\\
    \ge & \frac{c}{\sigma_{\nu_1,\nu_2}(t)^p\sigma_{\alpha_1/\beta_1,\alpha_2/\beta_2 }(t)}\int_X\int_{B(x,\sigma_{1/\beta_1,1/\beta_2} (t))} |f(x)-f(y)|^pd\mu(y)\,d\mu(x)
 \end{align*}  
 Therefore we have 
\begin{align*}
   \|f\|_{p,\sigma_{\nu_1,\nu_2}}^p  &\ge  \sup_{t >0} \frac{c}{\sigma_{\nu_1,\nu_2}(t)^p\sigma_{\alpha_1/\beta_1,\alpha_2/\beta_2 }(t)}\int_X\int_{B(x,\sigma_{1/\beta_1,1/\beta_2} (t))} |f(x)-f(y)|^pd\mu(y)\,d\mu(x) \\
     & =c\sup_{r>0}E_{p,\sigma_{\nu_1 \beta_1,\nu_2 \beta_2 }}(f).
 \end{align*}  
 
We now turn to the upper bound. Fixing $r>0$, we set
\begin{align}
A(t,r)&:=\int_X\int_{X\setminus B(y,r)}p_{t}(x,y)|f(x)-f(y)|^{p}\,d\mu(x)\,d\mu(y),\label{E:A(t)}\\
B(t,r)&:=\int_X \int_{B(y,r)}p_{t}(x,y)|f(x)-f(y)|^{p}\,d\mu(x)\,d\mu(y),\label{E:B(t)}
\end{align}
so that $ \int_X \int_X |f(x)-f(y) |^p p_t (x,y) d\mu(x) d\mu(y) =A(t,r)+B(t,r)$.

\medskip

Applying~\eqref{concise upper bound} and the inequality $|f(x)-f(y)|^{p}\leq 2^{p-1}(|f(x)|^{p}+|f(y)|^{p})$ yields

\begin{align*}
&A(t,r)\leq\frac{c}{\sigma_{\alpha_1/\beta_1,\alpha_2/\beta_2 }(t)}\int_X\int_{X\setminus B(y,r)} \exp\Big(\!\!-C t \sigma_{\beta_2/(\beta_2-1),\beta_1/(\beta_1-1) } \left( \frac{d(x,y)}{t}\right)\Big)|f(y)|^{p}\,d\mu(x)\,d\mu(y)
\notag\\
&=
\frac{c}{\sigma_{\alpha_1/\beta_1,\alpha_2/\beta_2 }(t)}\sum_{k=1}^{\infty}\int_X\int_{B(y,2^{k}r)\setminus B(y,2^{k-1}r)}\exp\Big(\!\!-C t \sigma_{\beta_2/(\beta_2-1),\beta_1/(\beta_1-1) } \left( \frac{d(x,y)}{t}\right)\Big)|f(y)|^{p}\,d\mu(x)\,d\mu(y)
\notag\\
&\leq
\frac{c}{\sigma_{\alpha_1/\beta_1,\alpha_2/\beta_2 }(t)}\sum_{k=1}^{+\infty}\int_X\mu\bigl(B(y,2^{k}r)\bigr)\exp\Big(\!\!-C t \sigma_{\beta_2/(\beta_2-1),\beta_1/(\beta_1-1) } \left( \frac{2^{k-1}r}{t}\right)\Big)|f(y)|^{p}\,d\mu(y)
\notag\\
&\leq
\frac{c}{\sigma_{\alpha_1/\beta_1,\alpha_2/\beta_2 }(t)}\sum_{k=1}^{+\infty}\sigma_{\alpha_1,\alpha_2}(2^k r) \|f\|_{L^{p}}^{p}\exp\Big(\!\!-C t \sigma_{\beta_2/(\beta_2-1),\beta_1/(\beta_1-1) } \left( \frac{2^{k-1}r}{t}\right)\Big)
\notag\\
&\leq
\frac{c}{\sigma_{\alpha_1/\beta_1,\alpha_2/\beta_2 }(t)}\sum_{k=1}^{+\infty}\sigma_{\alpha_1,\alpha_2}(2^k r) \|f\|_{L^{p}}^{p}\exp\Big(\!\!-C t \sigma_{\beta_2/(\beta_2-1),\beta_1/(\beta_1-1) } \left( \frac{2^{k-1}r}{t}\right)\Big)
\notag\\
&\le 
\frac{c\|f\|_{L^{p}}^{p}}{\sigma_{\alpha_1/\beta_1,\alpha_2/\beta_2 }(t)} \int_{r}^{+\infty}\sigma_{\alpha_1,\alpha_2}(u)\exp\Big(\!\!-C t \sigma_{\beta_2/(\beta_2-1),\beta_1/(\beta_1-1) } \left( \frac{u}{t}\right)\Big) \frac{du}{u} \\
&\le \frac{c\|f\|_{L^{p}}^{p}}{\sigma_{\alpha_1/\beta_1,\alpha_2/\beta_2 }(t)} \int_{0}^{+\infty}\sigma_{\alpha_1,\alpha_2}(u)\exp\Big(\!\!-\frac{C}{2} t \sigma_{\beta_2/(\beta_2-1),\beta_1/(\beta_1-1) } \left( \frac{u}{t}\right)\Big) \frac{du}{u} \\
 &\, \, \, \, \, \, \,  \exp\Big(\!\!-\frac{C}{2} t \sigma_{\beta_2/(\beta_2-1),\beta_1/(\beta_1-1) } \left( \frac{r}{t}\right)\Big) \\
 & \le c\|f\|_{L^{p}}^{p}  \exp\Big(\!\!-C t \sigma_{\beta_2/(\beta_2-1),\beta_1/(\beta_1-1) } \left( \frac{r}{t}\right)\Big).
\end{align*}

On the other hand, the heat kernel estimate~\eqref{concise upper bound} allows to bound $B(t,r)$ by
\begin{align}
& \frac{c}{\sigma_{\alpha_1/\beta_1,\alpha_2/\beta_2 }(t)}\int_X \int_{B(y,r)} \exp\Big(\!\!-c t \sigma_{\beta_2/(\beta_2-1),\beta_1/(\beta_1-1) } \left( \frac{d(x,y)}{t}\right)\Big) |f(x)-f(y)|^{p}\,d\mu(x)\,d\mu(y)\notag\\
\leq &
\frac{c}{\sigma_{\alpha_1/\beta_1,\alpha_2/\beta_2 }(t)}\sum_{k=1}^{+\infty}\int\limits_X\int\limits_{B(y,2^{1-k}r)\setminus B(y,2^{-k}r)}\exp\Big(\!\!-c t \sigma_{\beta_2/(\beta_2-1),\beta_1/(\beta_1-1) } \left( \frac{d(x,y)}{t}\right)\Big) |f(x)-f(y)|^{p}\,d\mu(x)\,d\mu(y)\notag\\
\leq &
\frac{c}{\sigma_{\alpha_1/\beta_1,\alpha_2/\beta_2 }(t)}\sum_{k=1}^{+\infty}\int\limits_X\int\limits_{B(y,2^{1-k}r)\setminus B(y,2^{-k}r)}\exp\Big(\!\!-c t \sigma_{\beta_2/(\beta_2-1),\beta_1/(\beta_1-1) } \left( \frac{2^{-k}r}{t}\right)\Big) |f(x)-f(y)|^{p}\,d\mu(x)\,d\mu(y)\notag\\
\leq &
\frac{c}{\sigma_{\alpha_1/\beta_1,\alpha_2/\beta_2 }(t)}\sum_{k=1}^{+\infty} \exp\Big(\!\!-c t \sigma_{\beta_2/(\beta_2-1),\beta_1/(\beta_1-1) } \left( \frac{2^{-k}r}{t}\right)\Big) \int\limits_X\int\limits_{B(y,2^{1-k}r)} |f(x)-f(y)|^{p}\,d\mu(x)\,d\mu(y)\notag\\
\leq & c
\sum_{k=1}^{+\infty} \frac{\exp\Big(\!\!-c t \sigma_{\beta_2/(\beta_2-1),\beta_1/(\beta_1-1) } \left( \frac{2^{-k}r}{t}\right)\Big) \sigma_{\nu_1\beta_1,\nu_2\beta_2}(2^{1-k}r)^p\sigma_{\alpha_1,\alpha_2 }(2^{1-k}r)}{\sigma_{\alpha_1/\beta_1,\alpha_2/\beta_2 }(t)} E_{p,\sigma_{\nu_1 \beta_1,\nu_2 \beta_2 }}(f,2^{1-k}r)\notag\\
\leq & c \sup_{s \in (0,r]} E_{p,\sigma_{\nu_1 \beta_1,\nu_2 \beta_2 }}(f,s)
\sum_{k=1}^{+\infty} \frac{\exp\Big(\!\!-c t \sigma_{\beta_2/(\beta_2-1),\beta_1/(\beta_1-1) } \left( \frac{2^{-k}r}{t}\right)\Big) \sigma_{\nu_1\beta_1,\nu_2\beta_2}(2^{1-k}r)^p\sigma_{\alpha_1,\alpha_2 }(2^{1-k}r)}{\sigma_{\alpha_1/\beta_1,\alpha_2/\beta_2 }(t)} \notag\\
\leq &c  \sigma_{\nu_1,\nu_2}(t)^p \sup_{s \in (0,r]} E_{p,\sigma_{\nu_1 \beta_1,\nu_2 \beta_2 }}(f,s).
\end{align}
Combining both estimates it follows that
\begin{align*}
  & \int_X \int_X |f(x)-f(y) |^p p_t (x,y) d\mu(x) d\mu(y) \\
 \le & c_1  \exp\Big(\!\!-C_1 t \sigma_{\beta_2/(\beta_2-1),\beta_1/(\beta_1-1) } \left( \frac{r}{t}\right)\Big) \|f\|_{L^{p}}^{p}+c_2  \sigma_{\nu_1,\nu_2}(t)^p \sup_{s \in (0,r]} E_{p,\sigma_{\nu_1 \beta_1,\nu_2 \beta_2 }}(f,s),
 \end{align*}
which yields
\begin{multline*}
    \sup_{t>0}\frac{1}{\sigma_{\nu_1,\nu_2}(t)^p} \int_X \int_X |f(x)-f(y) |^p p_t (x,y) d\mu(x) d\mu(y) \\
\le c \sup_{s \in (0,r]} E_{p,\sigma_{\nu_1 \beta_1,\nu_2 \beta_2 }}(f,s)+\frac{C}{\sigma_{\nu_1 \beta_1,\nu_2 \beta_2 }(r)^p} \|f\|_{L^{p}(X,\mu)}^{p}.
\end{multline*}
The proof is completed by letting $r \to +\infty$.
\end{proof}

From Theorem~\ref{T:KS_char} one now directly finds the connection between the Korevaar-Schoen-type spaces in this section and the Besov spaces from the previous section.

\begin{corollary}\label{C:KS_char}
Let the underlying space satisfy the assumptions from Section~\ref{S:KS_char}. Then, for any $\nu_1,\nu_2>0$ it holds that 
\begin{equation}\label{E:KS_char_a}
KS^{p,\sigma_{\nu_1,\nu_2}}(X)=\mathbf{B}^{p,\sigma_{\nu_1/\beta_1,\nu_2/\beta_2}}(X),
\end{equation}
equivalently
\begin{equation}\label{E:KS_char_b}
KS^{p,\sigma_{\nu_1\beta_1,\nu_2\beta_2}}(X)=\mathbf{B}^{p,\sigma_{\nu_1,\nu_2}}(X),
\end{equation}
with equivalent seminorms.
\end{corollary}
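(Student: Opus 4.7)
The statement is advertised as a direct consequence of Theorem~\ref{T:KS_char}, so my plan is essentially to translate the seminorm equivalence into an equality of function spaces via a substitution of parameters.

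First, I would observe that both spaces are defined as subspaces of $L^p(X,\mu)$: the Besov space $\mathbf{B}^{p,\sigma_{\nu_1,\nu_2}}(X)$ collects the $f \in L^p$ with $\|f\|_{p,\sigma_{\nu_1,\nu_2}} < \infty$, while $KS^{p,\sigma_{\nu_1\beta_1,\nu_2\beta_2}}(X)$ collects the $f \in L^p$ with $\sup_{r>0} E_{p,\sigma_{\nu_1\beta_1,\nu_2\beta_2}}(f,r) < \infty$. Theorem~\ref{T:KS_char} exactly asserts
\[
c\sup_{r>0}E_{p,\sigma_{\nu_1 \beta_1,\nu_2 \beta_2 }}(f,r)\leq \|f\|_{p,\sigma_{\nu_1,\nu_2}}^p\leq C\sup_{r>0}E_{p,\sigma_{\nu_1 \beta_1,\nu_2 \beta_2 }}(f,r),
\]
so the finiteness of one seminorm is equivalent to the finiteness of the other. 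This immediately gives the set-theoretic equality~\eqref{E:KS_char_b}, and the chain of inequalities above shows that the seminorm $\|\cdot\|_{p,\sigma_{\nu_1,\nu_2}}$ is equivalent to $\|\cdot\|_{KS^{p,\sigma_{\nu_1\beta_1,\nu_2\beta_2}}}$ (with the $p$-th power accounting for the exponent in the definition~\eqref{E:def_KS_snorm}). Taking the associated Banach norms obtained by adding the common $L^p$-norm, one concludes the equivalence of norms.

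For the first equality~\eqref{E:KS_char_a}, my plan is simply to apply Theorem~\ref{T:KS_char} with $\nu_i$ replaced by $\widetilde{\nu}_i := \nu_i/\beta_i$ for $i=1,2$, which is legal since the theorem holds for arbitrary positive exponents. Under this substitution the Besov seminorm becomes $\|f\|_{p,\sigma_{\nu_1/\beta_1,\nu_2/\beta_2}}$ while the Korevaar--Schoen exponents $\widetilde{\nu}_i \beta_i$ simplify to $\nu_i$, yielding the equivalence between $\mathbf{B}^{p,\sigma_{\nu_1/\beta_1,\nu_2/\beta_2}}(X)$ and $KS^{p,\sigma_{\nu_1,\nu_2}}(X)$ with equivalent seminorms, exactly as claimed.

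Since Theorem~\ref{T:KS_char} does all the analytic work, there is no real obstacle here; the only thing to be careful about is matching indices correctly (the KS scale is by a factor $\beta_i$ coarser than the Besov scale) and pointing out that the underlying $L^p$-space is the same on both sides, so that the seminorm equivalence lifts to an equality of Banach spaces with equivalent norms.
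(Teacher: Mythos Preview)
Your proposal is correct and matches the paper's approach exactly: the paper simply states that the corollary follows directly from Theorem~\ref{T:KS_char}, and your argument---reading off~\eqref{E:KS_char_b} from the seminorm equivalence and obtaining~\eqref{E:KS_char_a} by the substitution $\nu_i \mapsto \nu_i/\beta_i$---is precisely what that entails.
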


\subsection{Weak Bakry-\'Emery condition}

Assuming the volume regular condition~\eqref{E:VR_cond} it has recently been proved in~\cite{GY24} that, under suitable conditions, the heat semigroup $\{P_t\}_{t\geq 0}$ satisfies a H\"older estimate that is generalized in the next definition.

\begin{definition}
The semigroup $\{P_t\}_{t\geq 0}$ is said to satisfy the weak Bakry-\'Emery condition ${\rm wBE}(\kappa_1,\kappa_2)$ if there exist $C_{\rm wBE},\kappa_1,\kappa_2>0$ such that
\begin{equation}\label{E:wBE}
    |P_tf(x)-P_tf(y)|\leq C_{\rm wBE}\frac{\sigma_{\kappa_1,\kappa_2}\big(d(x,y)\big)}{\sigma_{\frac{\kappa_1}{\beta_1},\frac{\kappa_2}{\beta_2}}(t)}\|f\|_{L^\infty}
\end{equation}
for any $x,y\in X$, $t>0$ and $f\in L^\infty (X,\mu)$. 
\end{definition}

This property will play a pivotal role in proving the continuity of the heat semigroup in  a given Besov class, see Proposition \ref{T:continuity}. We prove below that it is in fact satisfied, together with the previous assumptions, in a large class of spaces.

\begin{prop}\label{P:wBE_product}
    Let $(X,d,\mu)$ be a metric Dirichlet space as before which has a heat kernel that satisfies the estimates \eqref{E:assump_HKE} with $\alpha_i <\beta_i$, $i=1,2$. Then, for every $n \ge 1$, the semigroup on the product metric Dirichlet space $(X^n,d_{X^n},\mu^{\otimes n})$ satisfies ${\rm wBE}(\beta_1-\alpha_1,\beta_2-\alpha_2)$. Here, the distance on the product space is defined by
    \[
    d_{X^n}(\mathbf{x},\mathbf{y}):=\sum_{i=1}^nd(x_i,y_i).
    \]
\end{prop}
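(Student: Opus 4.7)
My plan is to reduce the product-space statement to the single-factor case $n=1$ via a telescoping argument across the $n$ coordinates, and to handle the single-factor case using the heat-kernel H\"older estimates that become available under the strict gap $\alpha_i<\beta_i$.

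The case $n=1$ is essentially known: the heat kernel estimates~\eqref{E:assump_HKE} with $\alpha_i<\beta_i$ imply a near-diagonal H\"older regularity estimate for $x\mapsto p_t(x,y)$, which is precisely the main output of~\cite{GY24} in this setting. Integrating that estimate against an $L^\infty$ function produces exactly the ${\rm wBE}(\beta_1-\alpha_1,\beta_2-\alpha_2)$ inequality on $X$, so here I would either invoke the result directly or sketch the short derivation from the off-diagonal heat-kernel H\"older bound.

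For $n\ge 2$, the product semigroup factors as the composition $P_t^{(n)}=P_t^{(1)}P_t^{(2)}\cdots P_t^{(n)}$, where $P_t^{(k)}$ denotes the single-factor semigroup $\{P_t\}_{t\geq 0}$ acting in the $k$-th coordinate only; the factors commute and each is an $L^\infty$ contraction. Given $\mathbf{x}=(x_1,\ldots,x_n)$ and $\mathbf{y}=(y_1,\ldots,y_n)$, I would write the difference as a telescoping sum
\[
P_t^{(n)}f(\mathbf{x})-P_t^{(n)}f(\mathbf{y})=\sum_{k=1}^n \Delta_k,
\]
where $\Delta_k$ is the increment that changes only the $k$-th coordinate from $x_k$ to $y_k$ (with the earlier coordinates set to $y_i$ and the later ones to $x_i$). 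Viewed as a function of the $k$-th variable with the remaining coordinates frozen, the quantity inside $\Delta_k$ is of the form $P_t^{(k)} g_k$ for some $g_k\in L^\infty(X)$ satisfying $\|g_k\|_{L^\infty}\le \|f\|_{L^\infty}$. The single-factor wBE established in the first step then gives
\[
|\Delta_k|\le C_{\rm wBE}\,\frac{\sigma_{\kappa_1,\kappa_2}\bigl(d(x_k,y_k)\bigr)}{\sigma_{\kappa_1/\beta_1,\kappa_2/\beta_2}(t)}\,\|f\|_{L^\infty},\qquad \kappa_i=\beta_i-\alpha_i.
\]

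Summing over $k$, the desired conclusion reduces to the elementary comparison
\[
\sum_{k=1}^n \sigma_{\kappa_1,\kappa_2}\bigl(d(x_k,y_k)\bigr)\le C_n\,\sigma_{\kappa_1,\kappa_2}\Big(\sum_{k=1}^n d(x_k,y_k)\Big)=C_n\,\sigma_{\kappa_1,\kappa_2}\bigl(d_{X^n}(\mathbf{x},\mathbf{y})\bigr),
\]
for a constant $C_n$ depending only on $n,\kappa_1,\kappa_2$. This is a piecewise power-mean check: when $\sum_k d(x_k,y_k)\le 1$ all individual $d(x_k,y_k)\le 1$ and the bound with exponent $\kappa_1$ is a direct application of Jensen's inequality; when $\sum_k d(x_k,y_k)>1$, split the indices according to whether $d(x_k,y_k)\le 1$ or not, bound the first contribution by the trivial estimate $\le n$, and apply the power-mean inequality with exponent $\kappa_2$ to the second, using $\sigma_{\kappa_1,\kappa_2}(\sum_k d(x_k,y_k))=(\sum_k d(x_k,y_k))^{\kappa_2}\ge 1$ to absorb the first contribution. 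The main technical point is Step~1, namely establishing wBE on $X$ itself in this precise form, which genuinely requires the strict gap $\alpha_i<\beta_i$; once that is in place, the product step is a pure bookkeeping argument describing how the sum-distance on $X^n$ interacts with the piecewise power function $\sigma_{\kappa_1,\kappa_2}$.
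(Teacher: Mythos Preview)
Your proposal is correct and follows essentially the same route as the paper: the case $n=1$ is attributed to~\cite{GY24}, and the product case is handled by a coordinate-wise telescoping argument together with the elementary comparison $\sum_k\sigma_{\kappa_1,\kappa_2}(a_k)\le C\,\sigma_{\kappa_1,\kappa_2}\bigl(\sum_k a_k\bigr)$. The only cosmetic difference is that the paper phrases the product step as an induction (writing out $n=2$), whereas you treat general $n$ directly; also, your justification of the comparison when $\sum_k d(x_k,y_k)\le1$ is really just monotonicity of $t\mapsto t^{\kappa_1}$ (giving the bound with constant $n$) rather than Jensen, but this is a harmless mislabel.
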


\begin{proof}
The case $n=1$ follows from~\cite{GY24}. The general case proceeds by induction, and we show the case $n=2$, the general case following the same line of arguments. Let $f\in L^{\infty}(X^2, \mu^{\otimes 2})$.   Given $\mathbf x=(x_1,y_1), \mathbf x'=(x_2,y_2)\in X^2$ we can estimate the change in $P_t^{X^2}f$ due to changing a single component of $\mathbf x$ using $wBE(\beta_1-\alpha_1,\beta_2-\alpha_2)$ on $X$:
\begin{align*}
&\left|P_t^{X^2} f(x_1, y_1)-P_t^{X^2} f(x_1, y_2) \right|
\\ 
= & 
\left|\int_X p_t(x_1,z)P_t[f (z,\cdot)](y_1) d\mu(z)-\int_X p_t(x_1,z)P_t[f (z,\cdot)](y_2) d\mu(z)\right|
\\ 
\le & 
\int_X p_t(x_1,z)\left| P_t[f (z,\cdot)](y_1) - P_t[f (z,\cdot)](y_2)\right| d\mu(z)
\\ 
\le &
C \frac{\sigma_{\beta_1-\alpha_1,\beta_2-\alpha_2}\big(d(y_1,y_2)\big)}{\sigma_{\frac{\beta_1-\alpha_1}{\beta_1},\frac{\beta_2-\alpha_2}{\beta_2}}(t)} \|f\|_{L^{\infty}(X^2)}.
\end{align*}
Estimating the change in the other component in the same manner and summing over components it follows that
\begin{align*}
\bigl|P_t^{X^n} f(\mathbf x)-P_t^{X^n} f(\mathbf x')\bigr|
& \le \frac{C}{\sigma_{\frac{\beta_1-\alpha_1}{\beta_1},\frac{\beta_2-\alpha_2}{\beta_2}}(t)}  \bigg(\sum_{i=1}^2\sigma_{\beta_1-\alpha_1,\beta_2-\alpha_2}\big(d(x_i,y_i)\big)\bigg) \|f\|_{L^{\infty}(X^n)} \\
 & \le
C \frac{\sigma_{\beta_1-\alpha_1,\beta_2-\alpha_2}(d_{X^2}(\mathbf x,\mathbf x'))}{\sigma_{\frac{\beta_1-\alpha_1}{\beta_1},\frac{\beta_2-\alpha_2}{\beta_2}}(t)} \|f\|_{L^{\infty}(X^n)},
\end{align*}
where we used in the last inequality the fact that there exists a constant $C>0$ such that for every $a_1,a_2 \ge 0$
\[
\sum_{i=1}^2 \sigma_{\beta_1-\alpha_1,\beta_2-\alpha_2} (a_i) \le C \sigma_{\beta_1-\alpha_1,\beta_2-\alpha_2} \left(\sum_{i=1}^2 a_i\right).
\]
\end{proof}

We point out that the product metric Dirichlet space $(X^n,d_{X^n},\mu^{\otimes n})$ satisfies the assumptions from this Section~\ref{S:KS_char}. For instance, the heat kernel is
\[
p_t^{X^n}(\mathbf{x},\mathbf{y})=p_t(x_1,y_1)\cdots p_t(x_n,y_n).
\]
Thus, it follows that
\begin{align*}
p_t^{X^n}(\mathbf{x},\mathbf{y}) &\asymp \frac{1}{\sigma_{\alpha_1/\beta_1,\alpha_2/\beta_2 }(t)^n} \exp\bigg(\!\!-c \sum_{i=1}^nt \sigma_{\beta_2/(\beta_2-1),\beta_1/(\beta_1-1) } \bigg( \frac{d(x_i,y_i)}{t}\bigg)\bigg) \\
 &\asymp \frac{1}{\sigma_{n\alpha_1/\beta_1,n\alpha_2/\beta_2 }(t)} \exp\bigg(\!\!-c t \sigma_{\beta_2/(\beta_2-1),\beta_1/(\beta_1-1) } \bigg( \frac{\sum_{i=1}^n d(x_i,y_i)}{t}\bigg)\bigg),
\end{align*}
where in the last inequality we use the fact that there exist constants $c,C>0$ such that for every $a_1,\cdots,a_n \ge 0$
\[
c \sigma_{\beta_1-\alpha_1,\beta_2-\alpha_2} \left(\sum_{i=1}^n a_i\right) \le \sum_{i=1}^n \sigma_{\beta_1-\alpha_1,\beta_2-\alpha_2} (a_i) \le C \sigma_{\beta_1-\alpha_1,\beta_2-\alpha_2} \left(\sum_{i=1}^n a_i\right).
\]

\begin{remark}\label{R:examples}
    The weak Bakry-\'Emery condition~\eqref{E:wBE} has been proved to be satisfied for Sierpinski gasket cable system and several fractal blow-ups in~\cite[Section 9]{GY24}. For instance, in the case of the Sierpinski cable system, 
    \begin{equation*}
    \alpha_1=1<2=\beta_1,\quad\alpha_2=\alpha:=\frac{\log 8}{\log 3}<\beta=\beta_2\quad\text{and}\quad\kappa_i=\beta_i-\alpha_i,\; i=1,2,
    \end{equation*}
    where $\beta>\frac{\log 8}{\log 3}$ is a suitable parameter depending on the walk dimension of the standard Sierpinski carpet. 
    In view of Proposition~\ref{P:wBE_product}, ${\rm wBE}(\kappa_1,\kappa_2)$ is also satisfied by non-trivial products of these fractals.
\end{remark}

\begin{remark}\label{R:open_Q}
    The validity of the weak Bakry-\'Emery condition~\eqref{E:wBE} on fractal-like manifolds as considered  e.g. in~\cite{BBK06} is still an open problem. 
    In that direction, we point out that, thanks to~\cite[Corollary 2.3]{CCFR17}, a weak Bakry-\'Emery condition would follow from the uniform gradient estimate on the heat kernel $| \nabla_x p_t (x,y)| \le C | \nabla_y p_t (x,y)| $.
\end{remark}

\subsection{Continuity of the semigroup in the Besov class}
We now turn to study some analytic properties of the heat semigroup $\{P_t\}_{t\geq 0}$ as an operator in a suitable Besov space from Section~\ref{S:hBesov_spaces}.
\begin{theorem}\label{T:continuity}
    Let $2\leq p<\infty$. Under the assumptions of Section~\ref{S:KS_char}, there exists $C>0$ such that for every $f \in L^p(X,\mu)$
    \begin{equation}\label{E:continuity}
    \|P_tf\|_{p,\Psi_p}\leq  
    \frac{C}{\Psi_p(t)}\|f\|_{L^p(X,\mu)}
    \end{equation}
    for any $t>0$, where 
    \begin{equation*}
    \begin{aligned}
    \Psi_p(r)&=\sigma_{(1-\frac{2}{p})\frac{\kappa_1}{\beta_1}+\frac{1}{p},(1-\frac{2}{p})\frac{\kappa_2}{\beta_2}+\frac{1}{p}}(r).
    \end{aligned}
    \end{equation*}
\end{theorem}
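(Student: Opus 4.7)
The strategy is to interpolate between the $p=2$ case, which is handled by spectral calculus for the symmetric Dirichlet form, and the $p=\infty$ case, which is essentially the content of the weak Bakry--Émery condition~\eqref{E:wBE}. The target exponents $\nu_i=(1-2/p)\kappa_i/\beta_i+1/p$ are precisely those produced by complex interpolation with parameter $\theta=1-2/p$ between the endpoint weights $\sigma_{1/2,1/2}$ and $\sigma_{\kappa_1/\beta_1,\kappa_2/\beta_2}$. I plan to realize this interpolation concretely through the pointwise decomposition $|P_tf(x)-P_tf(y)|^p=|P_tf(x)-P_tf(y)|^{p-2}\cdot|P_tf(x)-P_tf(y)|^2$, combining an $L^\infty$-type H\"older bound on the first factor with an $L^2$-type spectral estimate on the second.

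Concretely, the first step is to upgrade the wBE bound from $\|\cdot\|_{L^\infty}$ to $\|\cdot\|_{L^p}$ by writing $P_t=P_{t/2}\circ P_{t/2}$ and invoking the ultracontractivity $\|P_{t/2}f\|_{L^\infty}\le C\|f\|_{L^p}/\sigma_{\alpha_1/\beta_1,\alpha_2/\beta_2}(t)^{1/p}$, which follows from the heat kernel upper bound in Assumption~\ref{A:HKE} together with log-convexity of $L^q$-norms. This yields the $L^p$-version
\[
|P_tf(x)-P_tf(y)|\le \frac{C\,\sigma_{\kappa_1,\kappa_2}(d(x,y))\,\|f\|_{L^p}}{\sigma_{\kappa_1/\beta_1,\kappa_2/\beta_2}(t)\,\sigma_{\alpha_1/\beta_1,\alpha_2/\beta_2}(t)^{1/p}}.
\]
Raising this to the power $p-2$ controls the first factor in the decomposition. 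For the remaining factor $|P_tf(x)-P_tf(y)|^2$, I would use the $p=2$ case (which itself follows from spectral calculus applied to $P_tf=P_{t/2}(P_{t/2}f)$ and gives $\int_X P_s(|P_tf-P_tf(y)|^2)(y)\,d\mu(y)\le Cst^{-1}\|P_{t/2}f\|_{L^2}^2$). When the powers of $\sigma_{\kappa_i/\beta_i}(t)$ and $\sigma_{\alpha_i/\beta_i}(t)$ are combined, they add up to exactly $\Psi_p(t)^p$; to match the weight $\Psi_p(s)^p$ one uses that $p_s$ is effectively supported on scale $\sigma_{1/\beta_1,1/\beta_2}(s)$, so the extra factor $\sigma_{\kappa_1,\kappa_2}(d(x,y))^{p-2}$ produced in the decomposition contributes $\sigma_{(p-2)\kappa_1/\beta_1,(p-2)\kappa_2/\beta_2}(s)$ after integration against $p_s$. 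It would be cleanest to execute this last step via the Korevaar--Schoen reformulation of Corollary~\ref{C:KS_char}, which converts the $P_s$-average to a ball-average $\fint_{B(y,r)}$ and makes the bound $d(x,y)\le r$ transparent.

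The main obstacle is the final control of $\|P_{t/2}f\|_{L^2}$ by $\|f\|_{L^p}$. On an infinite-measure space $L^p$ is not contained in $L^2$ for $p>2$, and $P_{t/2}$ does not map $L^p$ into $L^2$ in general, so the naive spectral argument breaks down. A clean way out is to avoid passing through $\|P_{t/2}f\|_{L^2}^2$ altogether, by replacing the classical $L^2$-Dirichlet-form bound with its $L^p$-analogue, exploiting the identity
\[
-\frac{d}{dt}\|P_tf\|_{L^p}^p = p(p-1)\int_X |P_tf|^{p-2}\,\Gamma(P_tf)\,d\mu
\]
(and its non-local, heat-kernel incarnation) to control $\int_X |P_tf(y)|^{p-2}P_s(|P_tf-P_tf(y)|^2)(y)\,d\mu(y)$ directly in terms of $\|f\|_{L^p}^p/\Psi_p(t)^p$. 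Alternatively, one may establish the endpoint estimates at $p=2$ and $p=\infty$ rigorously and then invoke a complex interpolation theorem for the linear maps $f\mapsto (x,y)\mapsto P_tf(x)-P_tf(y)$, viewed as operators from $L^p(X,\mu)$ into $L^p(X\times X,p_s\,d\mu\otimes d\mu)$, identifying the interpolated target with the Besov space $\mathbf{B}^{p,\Psi_p}(X)$.
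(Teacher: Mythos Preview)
Your second alternative --- Riesz--Thorin interpolation for the linear map $f\mapsto\big((x,y)\mapsto P_tf(x)-P_tf(y)\big)$ --- is exactly the paper's approach, and it is the one that works cleanly. The pointwise decomposition you try first does have the gap you identify (on an infinite-measure space $P_{t/2}$ does not map $L^p$ to $L^2$ for $p>2$), and the carr\'e du champ fix you sketch is not obviously implementable: the identity you write controls $\int |P_tf|^{p-2}\Gamma(P_tf)\,d\mu$, which has the wrong weight $|P_tf(y)|^{p-2}$ instead of $|P_tf(x)-P_tf(y)|^{p-2}$, and there is no evident route from it to the Besov seminorm.

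The one refinement the paper makes over your Riesz--Thorin proposal is the choice of target measure. With your choice $p_s\,d\mu\otimes d\mu$, the $L^\infty$ endpoint fails: wBE gives $|P_tf(x)-P_tf(y)|\le C\sigma_{\kappa_1,\kappa_2}(d(x,y))\sigma_{\kappa_1/\beta_1,\kappa_2/\beta_2}(t)^{-1}\|f\|_{L^\infty}$, which is not uniform in $(x,y)$. The paper instead uses the ball-restricted measure $d\nu_r:=\mathbf{1}_{\{d(x,y)<r\}}\,d\mu\otimes d\mu$; on its support $d(x,y)<r$, so wBE gives a genuine $L^\infty\to L^\infty$ operator bound, and the $L^2\to L^2$ endpoint follows from the spectral estimate $\sup_{s>0}s^{-1/2}\big(\int\!\!\int p_s|P_tf(x)-P_tf(y)|^2\big)^{1/2}\le Ct^{-1/2}\|f\|_{L^2}$ combined with the heat kernel lower bound. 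Riesz--Thorin then gives the $L^p\to L^p(d\nu_r)$ bound for each $r$, and the Korevaar--Schoen characterization (Theorem~\ref{T:KS_char}) converts the sup over $r$ into the Besov seminorm $\|\cdot\|_{p,\Psi_p}$. You already anticipated this last step when you remarked that the KS picture ``makes the bound $d(x,y)\le r$ transparent''; the point is that this is not merely a cosmetic convenience but what makes the $L^\infty$ endpoint of the interpolation go through.
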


\begin{remark}\label{R:sanity_check_continuity}
The estimate~\eqref{E:continuity} is a generalization of~\cite[Theorem 3.9]{ABCRST3}. Indeed, in the case of standard sub-Gaussian heat kernel estimates and the standard volume doubling property, $\alpha_1=\alpha_2=d_H$ and $\beta_1=\beta_2=d_W$. Moreover, the weak Bakry-\'Emery condition ${\rm wBE}(\kappa)$ from~\cite[Definition 3.1]{ABCRST3} corresponds to $\kappa_1=\kappa_2=\kappa$, whence 
\[
\Psi_p(r)=r^{(1-\frac{2}{p})\frac{\kappa}{d_W}+\frac{1}{p}}.
\]
\end{remark}
\begin{proof}[Proof of Theorem~\ref{T:continuity}]
    We follow ideas from an updated proof of~\cite[Theorem 3.9]{ABCRST3}. Let $r>0$ and define the measure $d\mu_r(x,y):=\mathbf{1}_{\{d(x,y)<r\}}d\mu(x)\otimes d\mu(y)$ as well as the linear operator 
    \begin{equation}\label{E:aux_op_Pt}
    \begin{aligned}
    \mathcal{P}_t\colon& L^2(X,\mu)\longrightarrow L^2(X\times X,d\nu_r)\\    
    &\qquad f\quad\longmapsto\quad P_tf(x)-P_tf(y).
    \end{aligned}
    \end{equation}
    From the lower bound in Assumption~\ref{A:HKE} and the volume regular condition~\eqref{E:VR_cond} it follows that 
    \begin{multline*}
        \int_X\int_X(P_tf(x)-P_tf(y))^2d\nu_r(x,y)=\int_X\int_{B(x,r)}(P_tf(x)-P_tf(y))^2d\mu(y)\,d\mu(x)\\
        \leq Cr^{\alpha_1}\mathbf{1}_{\{0<r<1\}}(r)\int_X\int_{B(x,r)}p_{r^{\beta_1}}(x,y)(P_tf(x)-P_tf(y))^2d\mu(y)\,d\mu(x)\\
        + Cr^{\alpha_2}\mathbf{1}_{\{r>1\}}(r)\int_X\int_{B(x,r)}p_{r^{\beta_2}}(x,y)(P_tf(x)-P_tf(y))^2d\mu(y)\,d\mu(x)
    \end{multline*}
    Further, by virtue of~\cite[Theorem 5.1]{ABCRST1} we have the estimate
    \[
    \sup_{s>0}s^{-1/2}\bigg(\int_X\int_Xp_s(x,y)(P_tf(x)-P_tf(y))^2d\mu(y)\,d\mu(x)\bigg)^{1/2}\leq Ct^{-1/2}\|f\|_{L^2(X,\mu)},
    \]
    which applied to the previous bound (with $s=r^{\beta_1}$ and $s=r^{\beta_2}$ respectively) yields
    \begin{equation*}
        \int_X\int_X(P_tf(x)-P_tf(y))^2d\nu_r(x,y)
       \leq C\sigma_{\alpha_1+\beta_1,\alpha_2+\beta_2}(r)t^{-1}\|f\|_{L^2(X,\mu)}^2
    \end{equation*}
    and hence
    \begin{equation*}
    \|\mathcal{P}_t\|_{L^2(X,\mu)\to L^2(X\times X,d\nu_r)}<\frac{C}{t^{1/2}}\sigma_{\alpha_1+\beta_1,\alpha_2+\beta_2}(r)^{1/2}.    
    \end{equation*}
    In addition, the weak Bakry-\'Emery estimate~\eqref{E:wBE} implies 
    \begin{equation*}
        \|\mathcal{P}_t\|_{L^\infty(X,\mu)\to L^\infty(X\times X,d\nu_r)}\\
        <C_{\rm wBE}\sigma_{\kappa_1,\kappa_2}(r)\sigma_{\frac{-\kappa_1}{\beta_1},\frac{-\kappa_2}{\beta_2}}(t).
    \end{equation*}    
    By virtue of the Riesz-Thorin interpolation theorem and Proposition~\ref{P:props_sigma}, it follows that
    \begin{align}
        &\|\mathcal{P}_t\|_{L^p(X,\mu)\to L^p(X\times X,d\nu_r)}\notag\\
        &<C\big(\sigma_{\alpha_1+\beta_1,\alpha_2+\beta_2}(r)t^{-1}\big)^{\frac{1}{p}}
       \big(\sigma_{\kappa_1,\kappa_2}(r)\sigma_{\frac{-\kappa_1}{\beta_1},\frac{-\kappa_2}{\beta_2}}(t) \big)^{1-\frac{2}{p}}\notag\\
       &=C\sigma_{\frac{\alpha_1+\beta_1}{p},\frac{\alpha_2+\beta_2}{p}}(r)\sigma_{\kappa_1(1-\frac{2}{p}),\kappa_2(1-\frac{2}{p})}(r)t^{-\frac{1}{p}}\sigma_{\frac{\kappa_1}{\beta_1}(\frac{2}{p}-1),\frac{\kappa_2}{\beta_2}(\frac{2}{p}-1)}(t)\notag\\
       &=C\sigma_{\frac{1}{p}(\alpha_1+\beta_1+(p-2)\kappa_1),\frac{1}{p}(\alpha_2+\beta_2+(p-2)\kappa_2)}(r)\sigma_{\frac{\kappa_1}{\beta_1}(\frac{2}{p}-1)-\frac{1}{p},\frac{\kappa_2}{\beta_2}(\frac{2}{p}-1)-\frac{1}{p}}(t)\label{E:aux_po_Lp_est}
    \end{align}
    for any $2\leq p<\infty$. In other words, and using the volume regularity assumption~\eqref{E:VR_cond},
    \begin{multline*}
       \frac{\sigma_{\alpha_1,\alpha_2}(r)^{1/p}}{\sigma_{\frac{1}{p}(\alpha_1+\beta_1+(p-2)\kappa_1),\frac{1}{p}(\alpha_2+\beta_2+(p-2)\kappa_2)}(r)}\bigg(\int_X\fint_{B(x,r)}|P_tf(x)-P_tf(y)|^pd\mu(y)\,d\mu(x)\bigg)^{1/p}\\
       \leq C \sigma_{\frac{\kappa_1}{\beta_1}(\frac{2}{p}-1)-\frac{1}{p},\frac{\kappa_2}{\beta_2}(\frac{2}{p}-1)-\frac{1}{p}}(t)\|f\|_{L^p(X,\mu)}.
    \end{multline*}
    Taking $\sup_{r>0}$ on both sides of the above inequality and applying Theorem~\ref{T:KS_char} with $\nu_i=\frac{1}{p}(\beta_i+(p-2)\kappa_i)$ it finally follows that
    \begin{equation*}
        \|f\|_{p,\sigma_{(1-\frac{2}{p})\frac{\kappa_1}{\beta_1}+\frac{1}{p},(1-\frac{2}{p})\frac{\kappa_2}{\beta_2}+\frac{1}{p}}}
        \leq C \sigma_{\frac{\kappa_1}{\beta_1}(\frac{2}{p}-1)-\frac{1}{p},\frac{\kappa_2}{\beta_2}(\frac{2}{p}-1)-\frac{1}{p})}(t)\|f\|_{L^p(X,\mu)}
    \end{equation*}
    as we wanted to prove.
 \end{proof}

 \subsection{Pseudo Poincar\'e inequality}
 An important tool that will later on lead to proving the weak monotonicity from Assumption~\ref{A:PPI} is the following Pseudo Poincar\'e inequality.
\begin{proposition}\label{P:pseudoPI}
Under the main assumptions of this section, there exists $C>0$ such that for every $f \in \mathbf{B}^{p,\Psi_p}(X)$ and $t \ge 0$ 
    \begin{equation}\label{E:pseudoPI}
        \|P_tf-f\|_{L^p(X,\mu)}\leq C\Psi_p(t)\bigg(\liminf_{\tau\to 0^+}\frac{1}{\tau^{(1-\frac{2}{p})\frac{\kappa_1}{\beta_1}+\frac{1}{p}}}\int_XP_\tau(|f-f(y)|)^pd\mu(y)\bigg)^{1/p}
    \end{equation}
    with 
    \[
    \Psi_p(t)=\sigma_{(1-\frac{2}{p})\frac{\kappa_1}{\beta_1}+\frac{1}{p},(1-\frac{2}{p})\frac{\kappa_2}{\beta_2}+\frac{1}{p}}(t)
    \]
    for $1\leq p\leq 2$.
\end{proposition}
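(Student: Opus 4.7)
The plan is a telescoping argument whose main building block is an estimate of $\|P_tf - P_{t+s}f\|_{L^p}$ derived from Theorem~\ref{T:continuity} at the conjugate exponent $q = p/(p-1)$, which lies in $[2, \infty)$ for $p \in (1, 2]$ (the endpoint $p = 1$ is to be recovered at the end by passing to the limit $p \to 1^+$). After telescoping $f - P_tf = (f - P_\delta f) + \sum_{k=1}^M(P_{k\delta}f - P_{(k+1)\delta}f)$ over a fine uniform partition $(M+1)\delta = t$, sending $\delta \to 0^+$ will produce the $\liminf$ on the right.

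For the building block, Theorem~\ref{T:continuity} applied with exponent $q$ reads
\[
\Big(\iint p_s(x,y)\,|P_tg(x) - P_tg(y)|^q\,d\mu(x)\,d\mu(y)\Big)^{1/q} \le C\,\frac{\Psi_q(s)}{\Psi_q(t)}\,\|g\|_{L^q},
\]
i.e.\ an operator bound on $M_{t,s}\colon g \mapsto P_tg(x) - P_tg(y)$ from $L^q(X, \mu)$ to $L^q(X \times X,\, p_s\,d\mu \otimes d\mu)$. $L^q$--$L^p$ duality gives the same norm for the adjoint $M_{t,s}^*\colon L^p(X \times X, p_s\,d\mu \otimes d\mu) \to L^p(X, \mu)$, which a direct computation identifies as $M_{t,s}^*h(z) = P_t(H_1 - H_2)(z)$ with $H_1(z) = \int p_s(z,y)h(z,y)\,d\mu(y)$ and $H_2(z) = \int p_s(x,z)h(x,z)\,d\mu(x)$. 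Plugging in $h(x,y) = f(x) - f(y)$ and using $P_s 1 = 1$ together with the symmetry of $p_s$, one finds $H_1 - H_2 = 2(f - P_sf)$, so $M_{t,s}^*h = 2(P_tf - P_{t+s}f)$, while $\|h\|_{L^p(p_s\,d\mu\otimes d\mu)} = J_p(s, f)$ where $J_p(s, f) := (\int P_s(|f-f(y)|^p)(y)\,d\mu(y))^{1/p}$. The key estimate follows:
\[
\|P_tf - P_{t+s}f\|_{L^p} \le C\,\frac{\Psi_q(s)}{\Psi_q(t)}\,J_p(s, f), \qquad s, t > 0.
\]

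Carrying out the telescoping with Jensen's inequality for the $k = 0$ piece and the above estimate for the rest yields
\[
\|f - P_tf\|_{L^p} \le J_p(\delta, f)\Big(1 + C\sum_{k=1}^M\frac{\Psi_q(\delta)}{\Psi_q(k\delta)}\Big).
\]
The decisive algebraic identity is $\big((1-\tfrac{2}{p})\tfrac{\kappa_i}{\beta_i} + \tfrac{1}{p}\big) + \big((1-\tfrac{2}{q})\tfrac{\kappa_i}{\beta_i} + \tfrac{1}{q}\big) = 1$ for $i = 1, 2$, a direct consequence of $\tfrac{1}{p} + \tfrac{1}{q} = 1$; writing these exponents as $s_p^{(i)}$ and $s_q^{(i)}$, the identity reflects $\Psi_p \cdot \Psi_q \simeq \mathrm{id}$ on each scaling regime of $\sigma_{\cdot,\cdot}$. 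Using this together with $s_q^{(i)} < 1$ (ensured by $\kappa_i < \beta_i$, as in Proposition~\ref{P:wBE_product}), a Riemann-sum computation---with a split at $k^* \asymp 1/\delta$ when $t > 1$ to accommodate the transition between the two regimes of $\Psi_q$---shows that $\sum_{k=1}^M \Psi_q(\delta)/\Psi_q(k\delta) \le C\,\Psi_p(t)/\Psi_p(\delta)$. Therefore $\|f - P_tf\|_{L^p} \le C\,\Psi_p(t)\,J_p(\delta, f)/\Psi_p(\delta)$ for every small $\delta > 0$, and choosing $\delta_n \to 0^+$ along a sequence realising the $\liminf$ on the right concludes the proof for $p \in (1, 2]$; $p = 1$ follows by sending $p \to 1^+$ with uniformly controlled constants. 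The main technical obstacle is the summation step in the two-scale regime, but the identity $s_p^{(i)} + s_q^{(i)} = 1$ makes both regimes telescope cleanly into $\Psi_p(t)/\Psi_p(\delta)$.
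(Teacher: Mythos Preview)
For $p\in(1,2]$ your argument is correct and shares its core with the paper's proof: both exploit Theorem~\ref{T:continuity} at the conjugate exponent $q\in[2,\infty)$ via duality, and both rely on the identity $s_p^{(i)}+s_q^{(i)}=1$ (equivalently $\Psi_p\Psi_q=\mathrm{id}$). The difference is only organizational. You discretize $f-P_tf$ into a telescoping sum with step $\delta$ and bound $\sum_{k=1}^M\Psi_q(\delta)/\Psi_q(k\delta)$ by a two-regime Riemann sum; the paper instead pairs $P_tf-f$ against a test function $g$, writes $\int_X(P_tf-f)g\,d\mu=-\lim_{\tau\to0^+}\int_0^t\tau^{-1}\int_XP_sf(I-P_\tau)g\,d\mu\,ds$, rewrites the inner integral through the symmetric kernel, applies H\"older and Theorem~\ref{T:continuity} to the $P_sg$ factor, and then integrates $\int_0^t\Psi_q(s)^{-1}ds\le C\Psi_p(t)$ via Lemma~\ref{L:integral_sigma}. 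Your Riemann sum is precisely the discrete analogue of this integral, so neither route adds an essentially new ingredient.

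The gap is the endpoint $p=1$. Sending $p\to1^+$ is not justified: a function $f\in\mathbf{B}^{1,\Psi_1}(X)$ need not lie in any $L^p$ with $p>1$, so your inequality for $p>1$ is simply unavailable for such $f$; and even for better $f$ you would have to commute the limits $p\to1^+$ and $\liminf_{\delta\to0^+}$ while tracking the $p$-dependence of all constants. More fundamentally, your building-block estimate degenerates at $q=\infty$: Theorem~\ref{T:continuity} is stated only for finite exponents, and its natural $L^\infty$ substitute is the weak Bakry--\'Emery bound~\eqref{E:wBE}, which carries an extra factor $\sigma_{\kappa_1,\kappa_2}(d(x,y))$ rather than a clean $\Psi_q(s)/\Psi_q(t)$. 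The paper therefore treats $p=1$ by a separate direct argument: it bounds $|P_sg(x)-P_sg(y)|$ by~\eqref{E:wBE} and then absorbs the factor $\sigma_{\kappa_1,\kappa_2}(d(x,y))$ into the heat kernel via a pointwise inequality of the form $\sigma_{\kappa_1,\kappa_2}(d(x,y))\,p_\tau(x,y)\le C\,\sigma_{\cdot,\cdot}(\tau)\,p_{c\tau}(x,y)$, combined with a near/far splitting at $d(x,y)=\tau$. That extra step is not recoverable from the $p>1$ argument by a limiting procedure.
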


\begin{remark}\label{R:sanity_check_pseudoPI}
    The estimate~\eqref{E:pseudoPI} is a generalization of~\cite[Proposition 3.10]{ABCRST3}. Again, in the case of standard sub-Gaussian heat kernel estimates and the standard volume doubling property, $\alpha_1=\alpha_2=d_H$ and $\beta_1=\beta_2=d_W$. Under the weak Bakry-\'Emery condition ${\rm wBE}(\kappa)$, it holds that $\frac{\kappa_1}{\beta_1}=\frac{\kappa_2}{\beta_2}=\frac{\kappa}{d_W}$, whence
    \[
    \Psi_p(t)=t^{(1-\frac{2}{p})\frac{\kappa}{d_W}+\frac{1}{p}}
    \]
    for $1\leq p\leq 2$, which coincides with~\cite[Proposition 3.10]{ABCRST3}.
\end{remark}

\begin{proof}[Proof of Proposition~\ref{P:pseudoPI}]
    We prove the estimate by duality following a similar argument as in~\cite[Proposition 3.10]{ABCRST3}. The strong continuity of $\{P_t\}_{t\geq 0}$ in $L^1(X,\mu)$ implies
    \begin{equation}\label{E:pseudoPI_01}
    \int_X(P_tf-f)g\,d\mu=-\lim_{\tau\to 0^+}\int_0^t\frac{1}{\tau}\int_XP_sf(I-P_\tau)g\,d\mu\,ds
    \end{equation}
    for any bounded Borel measurable $g$. Further, the symmetry of the heat kernel yields
    \begin{align}
       \frac{1}{\tau}\int_XP_sf(I-P_\tau)g\,d\mu
        &=\frac{1}{\tau}\int_Xf(I-P_\tau)P_sg\,d\mu\label{E:pseudoPI_02}\\
        &=\frac{1}{2\tau}\int_X\int_Xp_\tau(x,y)(P_sg(x)-P_sg(y))(f(x)-f(y))\,d\mu(y)\,d\mu(x).\notag
    \end{align}
    In the case $1<p\leq 2$, applying H\"older's inequality and Theorem~\ref{T:continuity}, the above expression is bounded by
    \begin{align}
        &\frac{1}{\tau}\bigg(\int_X\int_Xp_\tau(x,y)|P_sg(x)-P_sg(y)|^qd\mu(y)\,d\mu(x)\bigg)^{\frac{1}{q}}\bigg(\int_X\int_Xp_\tau (x,y)|f(x)-f(y)|^pd\mu(y)\,d\mu(x)\bigg)^{\frac{1}{p}}\\
        &\leq C\frac{\Psi_q(\tau)}{\tau}\|P_sg\|_{q,\Psi_q}\bigg(\int_X\int_Xp_\tau (x,y)|f(x)-f(y)|^pd\mu(y)\,d\mu(x)\bigg)^{\frac{1}{p}}\\
        &\leq C\frac{\Psi_q(\tau)}{\tau \Psi_q(s)}\|g\|_{L^q(X,\mu)}\bigg(\int_X\int_Xp_\tau (x,y)|f(x)-f(y)|^pd\mu(y)\,d\mu(x)\bigg)^{\frac{1}{p}},\label{E:pseudoPI_05}
    \end{align}
    where 
    \[
    \Psi_q(\tau)=\sigma_{(1-\frac{2}{q})\frac{\kappa_1}{\beta_1}+\frac{1}{q},(1-\frac{2}{q})\frac{\kappa_2}{\beta_2}+\frac{1}{q}}(\tau).
    \]
    Note also, that for $\frac{1}{p}+\frac{1}{q}=1$,
    \[
    \frac{\Psi_q(\tau)}{\tau}=\sigma_{(1-\frac{2}{q})\frac{\kappa_1}{\beta_1}+\frac{1}{q}-1,(1-\frac{2}{q})\frac{\kappa_2}{\beta_2}+\frac{1}{q}-1}(\tau)
    =\sigma_{(\frac{2}{p}-1)\frac{\kappa_1}{\beta_1}-\frac{1}{p},(\frac{2}{p}-1)\frac{\kappa_2}{\beta_2}-\frac{1}{p}}(\tau)=\frac{1}{\Psi_p(\tau)}.
    \]
    Thus, integrating~\eqref{E:pseudoPI_05} over $s\in (0,t)$, taking $\liminf_{\tau\to 0^+}$ and applying Lemma~\ref{L:integral_sigma} with $\nu_1= (1-\frac{2}{q})\frac{\kappa_1}{\beta_1}+\frac{1}{q}$ and $\nu_2=(1-\frac{2}{q})\frac{\kappa_2}{\beta_2}+\frac{1}{q}$ it follows from~\eqref{E:pseudoPI_01} that
    \begin{align*}
        &\bigg|\int_X(P_tf-f)g\,d\mu\bigg|\\
        &\leq C\|g\|_{L^q(X,\mu)}\int_0^t\frac{1}{\Psi_q(s)}ds\liminf_{\tau\to 0^+} \frac{\Psi_q(\tau)}{\tau}\bigg(\int_X\int_Xp_\tau (x,y)|f(x)-f(y)|^pd\mu(y)\,d\mu(x)\bigg)^{\frac{1}{p}}\\
        &\leq C\|g\|_{L^q(X,\mu)}\Psi_p(t)\liminf_{\tau\to 0^+} \frac{1}{\Psi_p(\tau)}\bigg(\int_X\int_Xp_\tau (x,y)|f(x)-f(y)|^pd\mu(y)\,d\mu(x)\bigg)^{\frac{1}{p}},
    \end{align*}
    with
    \[
    \Psi_p(t)=\sigma_{(1-\frac{2}{p})\frac{\kappa_1}{\beta_1}+\frac{1}{p},(1-\frac{2}{p})\frac{\kappa_2}{\beta_2}+\frac{1}{p}}(t).
    \]
    Finally, by duality we arrive at~\eqref{E:pseudoPI}. 

    \medskip
    
    In the case $p=1$, we apply the weak Bakry-\'Emery estimate~\eqref{E:wBE} to bound~\eqref{E:pseudoPI_02} by
    \begin{equation}\label{E:pseudoPI_wBE}
    C_{\rm wBE}\sigma_{\frac{\kappa_1}{\beta_1},\frac{\kappa_2}{\beta_2}}(s)\frac{\|g\|_{L^\infty}}{\tau s}\int_X\int_X p_\tau(x,y)\sigma_{\kappa_1,\kappa_2}(d(x,y))|f(x)-f(y)|\,d\mu(y)\,d\mu(x).
    \end{equation}

    Now, it follows from~\cite[Lemma 2.3]{ABCRST3} that 
    \begin{equation*}
        \sigma_{\nu_1,\nu_2}(d(x,y)) p_\tau(x,y)\leq 
        \begin{cases}
            C \sigma_{\frac{\kappa_1}{\beta_1},\frac{\kappa_2}{\beta_1}}(\tau) p_{c\tau}(x,y)&\text{if }\tau<d(x,y),\\
            C \sigma_{\frac{\kappa_1}{\beta_2},\frac{\kappa_2}{\beta_2}}(\tau) p_{c\tau}(x,y)&\text{if }\tau\geq d(x,y)
        \end{cases}
    \end{equation*}
    for any $\kappa_1,\kappa_2>0$. Therefore,
    \begin{multline}\label{E:pseudoPI_03}
     \int_X\int_X p_\tau(x,y)\sigma_{\kappa_1,\kappa_2}(d(x,y))|f(x)-f(y)|\,d\mu(y)\,d\mu(x) \\
    \le C\sigma_{\frac{\kappa_1}{\beta_2},\frac{\kappa_2}{\beta_2}}(\tau) \int_X\int_{B(x,\tau)} p_{c\tau}(x,y)|f(x)-f(y)|\,d\mu(y)\,d\mu(x) \\
     + C\sigma_{\frac{\kappa_1}{\beta_1},\frac{\kappa_2}{\beta_1}}(\tau)\int_X\int_{X{\setminus}B(x,\tau)} p_{c\tau}(x,y) |f(x)-f(y)|\,d\mu(y)\,d\mu(x).
    \end{multline}
    In addition, it follows from the heat kernel estimate~\eqref{E:assump_HKE} that, for $0 < \tau <1$ small enough,
    \begin{align*}
        &\int_X\int_{B(x,\tau)} p_{c\tau}(x,y)|f(x)-f(y)|\,d\mu(y)\,d\mu(x)\\
        & \le \frac{1}{\sigma_{\alpha_1/\beta_1,\alpha_2/\beta_2 }(c \tau)} \int_X\int_{B(x,\tau)}|f(x)-f(y)|\,d\mu(y)\,d\mu(x)\\
        & \le \frac{C\tau^{\alpha_1}}{\sigma_{\alpha_1/\beta_1,\alpha_2/\beta_2 }(c \tau)} \int_X\fint_{B(x,\tau)}|f(x)-f(y)|\,d\mu(y)\,d\mu(x) \\
        &\le \frac{C\tau^{\alpha_1} \sigma_{\alpha_1,\alpha_2 }( \tau)}{\sigma_{\alpha_1/\beta_1,\alpha_2/\beta_2 }(c \tau)} \|f\|_{1,\sigma_{\alpha_1/\beta_1,\alpha_2/\beta_2}}
        =C\tau^{\alpha_1(2-\frac{1}{\beta_1})}\|f\|_{1,\sigma_{\alpha_1/\beta_1,\alpha_2/\beta_2}}
    \end{align*}    
    and thus
    \begin{align}
        &\liminf_{\tau\to 0^+}\frac{1}{\tau}\sigma_{\frac{\kappa_1}{\beta_2},\frac{\kappa_2}{\beta_2}}(\tau)\int_X\int_{B(x,\tau)} p_{c\tau}(x,y)|f(x)-f(y)|\,d\mu(y)\,d\mu(x)\notag\\
        &\leq C\liminf_{\tau\to 0^+}\tau^{\frac{\kappa_1}{\beta_2}+\alpha_1(2-\frac{1}{\beta_1})-1}\|f\|_{1,\sigma_{\alpha_1/\beta_1,\alpha_2/\beta_2}}=0\label{E:pseudoPI_04}
    \end{align}
    since due to Assumption~\ref{A:HKE} and Assumption~\ref{A:VR_cond}, it holds that $\alpha_1(2-\frac{1}{\beta_1})-1\geq \frac{1}{2}$ and $\kappa_1>0>-\frac{1}{2}$.
    Finally, combining~\eqref{E:pseudoPI_02}-\eqref{E:pseudoPI_04}, it follows from~\eqref{E:pseudoPI_01} and Lemma~\ref{L:integral_sigma} that
    \begin{align*}
        &\bigg|\int_X(P_tf-f)g\,d\mu\bigg|\\
        &\leq C\|g\|_{L^\infty}\int_0^t\!\!\!\sigma_{\frac{\kappa_1}{\beta_1},\frac{\kappa_2}{\beta_2}}(s)\frac{ds}{s}\liminf_{\tau\to 0^+}\frac{\sigma_{\frac{\kappa_1}{\beta_1},\frac{\kappa_2}{\beta_2}}(\tau)}{\tau}\int_X\int_{X{\setminus}B(x,\tau)} p_{c\tau}(x,y)|f(x)-f(y)|\,d\mu(y)\,d\mu(x)\\
        &\leq C\|g\|_{L^\infty}\sigma_{1-\frac{\kappa_1}{\beta_1},1-\frac{\kappa_2}{\beta_2}}(t)\liminf_{\tau\to 0^+}\tau^{\frac{\kappa_1}{\beta_1}-1}\int_X\int_X p_{c\tau}(x,y)|f(x)-f(y)|\,d\mu(y)\,d\mu(x)\\
        &=C\|g\|_{L^\infty}\Psi_1(t)\liminf_{\tau\to 0^+}\frac{1}{\Psi_1(\tau)}\int_X\int_X p_{\tau}(x,y)|f(x)-f(y)|\,d\mu(y)\,d\mu(x)
    \end{align*}
    with $\Psi_1(t)=\sigma_{1-\frac{\kappa_1}{\beta_1},1-\frac{\kappa_2}{\beta_2}}(t)$.
\end{proof}

\section{\texorpdfstring{The case $p=1$}{[small]}} \label{S:case_p_1}
The aim of this section is to analyze in more detail the case $p=1$, corresponding to a theory of BV functions which in the fractal setting has not yet been treated with other existing constructions of $p$-energies, c.f.~\cite{KS24,MS25}. The main result in the section is that the weak Bakry-\'Emery estimate implies the $L^1$ weak monotonicity \eqref{A:PPI}, c.f. Theorem~\ref{T:weak_monotonicity}.

\medskip

We continue under the assumptions from Section~\ref{S:KS_char}, in particular we recall the functions $\Psi_1$ and $\Phi_1$ from the pseudo Poincar\'e inequality~\eqref{E:pseudoPI}.

\subsection{Weak monotonicity}
In the case $p=1$, Assumption~\ref{A:PPI} is often referred to as \emph{weak monotonicity}, c.f.~\cite{CGYZ24,KS24}. The main goal of this section is precisely to show that Assumption~\ref{A:PPI} actually holds true with $h_1=\Psi_1$ from the pseudo Poincar\'e inequality of Proposition~\ref{P:pseudoPI}.

\begin{theorem}\label{T:weak_monotonicity}
    Under the assumptions of Section~\ref{S:KS_char}, there exists $C>0$ such that 
    \begin{equation}\label{E:weak_monotonicity}
      \|f\|_{1,\Psi_1}\leq C\liminf_{t\to 0^+}\frac{1}{\Psi_1(t)}\int_XP_t(|f-f(y)|)(y)\,d\mu(y)
    \end{equation}
    for any $f\in \mathbf{B}^{1,\Psi_1}(X)$, where $\Psi_1(t)=\sigma_{1-\frac{\kappa_1}{\beta_1},1-\frac{\kappa_2}{\beta_2}}(t)$.
\end{theorem}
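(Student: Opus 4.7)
The plan is to reduce the general case to that of indicator functions via a layer-cake (co-area) representation, and to handle indicators directly using the pseudo-Poincar\'e inequality of Proposition~\ref{P:pseudoPI}. For notational brevity I write
\[
\mathcal J(g,t):=\int_X\int_X p_t(x,y)|g(x)-g(y)|\,d\mu(x)\,d\mu(y),\qquad\mathcal L(g):=\liminf_{\tau\to 0^+}\frac{\mathcal J(g,\tau)}{\Psi_1(\tau)},
\]
so that the theorem amounts to the inequality $\mathcal J(f,t)\leq C\,\Psi_1(t)\,\mathcal L(f)$ for every $t>0$.

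First I will dispense with the indicator case. For any measurable $E\subset X$ with $\mu(E)<\infty$, the symmetry and conservativeness of the semigroup yield---exactly as in the first computation of the proof of Proposition~\ref{P:general_isoperimetric}---the identity
\[
\mathcal J(\mathbf{1}_E,t)=\|P_t\mathbf{1}_E-\mathbf{1}_E\|_{L^1(X,\mu)}.
\]
The pseudo-Poincar\'e inequality (Proposition~\ref{P:pseudoPI}) with $p=1$ and $f=\mathbf{1}_E$ therefore gives $\mathcal J(\mathbf{1}_E,t)\leq C\,\Psi_1(t)\,\mathcal L(\mathbf{1}_E)$ for every $t>0$, which is already~\eqref{E:weak_monotonicity} at the level of characteristic functions, and in particular yields ${\rm Per}_{\Psi_1}(E)\leq C\,\mathcal L(\mathbf{1}_E)$.

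For general $f\in\mathbf{B}^{1,\Psi_1}(X)\subset L^1(X,\mu)$, set $E_s:=\{f>s\}$. Noting that $E_s$ has finite measure for $s>0$ and that $\mathbf{1}_{E_s}$ may be replaced by $\mathbf{1}_{E_s^c}$ without altering the integrand when $s<0$ (with $\mu(E_s^c)<\infty$ there), the layer-cake identity
\[
|f(x)-f(y)|=\int_{-\infty}^{+\infty}|\mathbf{1}_{E_s}(x)-\mathbf{1}_{E_s}(y)|\,ds
\]
combined with Fubini yields $\mathcal J(f,t)=\int_{\mathbb R}\mathcal J(\mathbf{1}_{E_s},t)\,ds$. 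Inserting the bound from the indicator step and factoring $\Psi_1(t)$ out,
\[
\mathcal J(f,t)\leq C\,\Psi_1(t)\int_{\mathbb R}\mathcal L(\mathbf{1}_{E_s})\,ds.
\]
Finally, applying Fatou's lemma with a sequence $\tau_n\to 0^+$ realizing $\mathcal L(f)$,
\[
\int_{\mathbb R}\mathcal L(\mathbf{1}_{E_s})\,ds\leq\liminf_{\tau\to 0^+}\int_{\mathbb R}\frac{\mathcal J(\mathbf{1}_{E_s},\tau)}{\Psi_1(\tau)}\,ds=\liminf_{\tau\to 0^+}\frac{\mathcal J(f,\tau)}{\Psi_1(\tau)}=\mathcal L(f),
\]
so $\mathcal J(f,t)\leq C\,\Psi_1(t)\,\mathcal L(f)$ for all $t>0$, which, taking the supremum over $t>0$, is the desired conclusion.

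The decisive observation making this strategy work is the identity $\mathcal J(\mathbf{1}_E,t)=\|P_t\mathbf{1}_E-\mathbf{1}_E\|_{L^1(X,\mu)}$, which converts the already-proved pseudo-Poincar\'e bound directly into weak monotonicity for indicators; the subsequent coarea-plus-Fatou argument is then routine. Accordingly I do not anticipate a significant obstacle; the only minor point of care is that Fatou's lemma for the continuous-parameter liminf must be applied through a realizing sequence $\tau_n\to 0^+$, which is standard.
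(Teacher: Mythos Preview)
Your proof is correct and follows essentially the same route as the paper: reduce to indicators via layer-cake, apply the pseudo-Poincar\'e inequality (Proposition~\ref{P:pseudoPI}) to indicators, and close with Fatou. The only cosmetic differences are that the paper packages the coarea step as the inequality in Lemma~\ref{L:prep4weak_mono_a} and handles signed $f$ through the truncation $f_n=(f+n)_+$ rather than your complementation trick $E_s\mapsto E_s^c$ for $s<0$; your use of the exact identity $\mathcal J(\mathbf{1}_E,t)=\|P_t\mathbf{1}_E-\mathbf{1}_E\|_{L^1(X,\mu)}$ in place of that inequality is a minor sharpening but not a different idea.
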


The proof consists of several lemmas and follows a strategy developed in~\cite[Section 4]{ABCRST3}. For the ease of notation, we start by defining the level sets
\begin{equation}\label{E:def_level_set}
    S_\lambda(f):=\{x\in X\colon f(x)>\lambda\}
\end{equation}
for any $f\in L^1(X,\mu)$ and $\lambda\in \mathbb{R}$, as well as the $h$-variation
that is naturally associated with the seminorm $\|f\|_{1,h}$, i.e.
\begin{equation*}
    {\rm\bf Var}_{h}(f):=\liminf_{t\to 0^+}\frac{1}{h(t)}\int_XP_t(|f-f(y)|)(y)\,d\mu(y).
\end{equation*}
With this notation, the weak monotonicity property~\eqref{E:weak_monotonicity} we want to prove reads
\begin{equation}\label{E:weak_monotonicity_w_Var}
    \|f\|_{1,\Psi_1}\leq C{\rm\bf Var}_{\Psi_1}(f)
\end{equation}
and Assumption~\ref{A:PPI} holds with $h_1(s)=\Psi_1(s)=\sigma_{1-\frac{\kappa_1}{\beta_1},1-\frac{\kappa_2}{\beta_2}}(s)$.

\medskip

We start by recalling two observations analogue to~\cite[Lemma 4.10, Lemma 4.11]{ABCRST3} that are still true in this generalized setting. They are proved in almost verbatim fashion and we thus refer the reader to~\cite{ABCRST3} for the details.
\begin{lemma}\label{L:prep4weak_mono_a}
    For any a.e. non-negative $f\in L^1(X,\mu)$ and $g\in L^\infty(X,\mu)$, and any $t>0$,
    \begin{multline*}
        \int_X\int_Xg(y)p_t(x,y)|f(x)-f(y)|\,d\mu(y)\,d\mu(x)\\
        \leq \int_0^\infty \Big(\|P_t(g\mathbf{1}_{S_\lambda (f)})-g\mathbf{1}_{S_\lambda(f)}\|_{L^1(X,\mu)}+\|P_t(g\mathbf{1}_{X{\setminus}S_\lambda(f)})-g\mathbf{1}_{X{\setminus}S_\lambda(f)}\|_{L^1(X,\mu)}\Big)\,d\lambda.
    \end{multline*}
    In particular, for $g\equiv 1$ it holds that
    \begin{equation}\label{E:prep4weak_mono_a_1}
      \int_X\int_Xp_t(x,y)|f(x)-f(y)|\,d\mu(y)\,d\mu(x)
        \leq  2\int_0^\infty\|P_t(\mathbf{1}_{S_\lambda(f)})-\mathbf{1}_{S_\lambda(f)}\|_{L^1(X,\mu)}d\lambda.
    \end{equation}
\end{lemma}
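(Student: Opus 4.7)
The plan is to use a layer-cake decomposition of $|f(x)-f(y)|$, following the strategy of \cite[Lemma 4.10]{ABCRST3}. Since $f\geq 0$ a.e., the elementary observation that for any two non-negative reals $a,b$ the set $\{\lambda\geq 0 \colon \mathbf{1}_{(\lambda,\infty)}(a)\neq \mathbf{1}_{(\lambda,\infty)}(b)\}$ has length $|a-b|$ yields the coarea-type identity
\[
|f(x)-f(y)|=\int_0^\infty |\mathbf{1}_{S_\lambda(f)}(x)-\mathbf{1}_{S_\lambda(f)}(y)|\,d\lambda.
\]

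First I would substitute this into the left-hand side of the claimed estimate and apply Tonelli's theorem, justified by the non-negativity of $g$, $p_t$ and the integrand, to exchange the order of integration and obtain
\[
\int_X\int_Xg(y)p_t(x,y)|f(x)-f(y)|\,d\mu(y)\,d\mu(x)=\int_0^\infty I(\lambda)\,d\lambda,
\]
where, setting $E_\lambda:=S_\lambda(f)$,
\[
I(\lambda)=\int_X\int_Xg(y)p_t(x,y)|\mathbf{1}_{E_\lambda}(x)-\mathbf{1}_{E_\lambda}(y)|\,d\mu(y)\,d\mu(x).
\]

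Next I would use the pointwise identity $|\mathbf{1}_E(x)-\mathbf{1}_E(y)|=\mathbf{1}_E(x)\mathbf{1}_{X\setminus E}(y)+\mathbf{1}_{X\setminus E}(x)\mathbf{1}_E(y)$, valid because the indicator takes values in $\{0,1\}$, together with the definition of $P_t$ via the heat kernel measure to split
\[
I(\lambda)=\int_{E_\lambda}P_t(g\mathbf{1}_{X\setminus E_\lambda})\,d\mu+\int_{X\setminus E_\lambda}P_t(g\mathbf{1}_{E_\lambda})\,d\mu.
\]
The last key step is the observation that since $g\mathbf{1}_{X\setminus E_\lambda}$ vanishes on $E_\lambda$, one has
\[
\int_{E_\lambda}P_t(g\mathbf{1}_{X\setminus E_\lambda})\,d\mu=\int_{E_\lambda}\big[P_t(g\mathbf{1}_{X\setminus E_\lambda})-g\mathbf{1}_{X\setminus E_\lambda}\big]\,d\mu\leq \|P_t(g\mathbf{1}_{X\setminus E_\lambda})-g\mathbf{1}_{X\setminus E_\lambda}\|_{L^1(X,\mu)},
\]
the last inequality being the triangle inequality (using $g\geq 0$, under which $P_t(g\mathbf{1}_{X\setminus E_\lambda})-g\mathbf{1}_{X\setminus E_\lambda}\geq 0$ on $E_\lambda$). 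The symmetric argument applied to the other term and integration over $\lambda$ gives the general bound.

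For the special case $g\equiv 1$, conservativity ($P_t 1=1$) gives $P_t\mathbf{1}_E-\mathbf{1}_E=-(P_t\mathbf{1}_{X\setminus E}-\mathbf{1}_{X\setminus E})$, so the two $L^1$-norms in the integrand coincide, producing the factor of $2$ in \eqref{E:prep4weak_mono_a_1}. There is no genuine obstacle in the argument; the only subtle point is the fourth step, where the triangle inequality relies on the non-negativity of the data (which is implicit when applying the lemma with $g\equiv 1$, the only case needed for the weak monotonicity proof).
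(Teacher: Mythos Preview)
Your argument is correct and follows exactly the layer-cake strategy of \cite[Lemma 4.10]{ABCRST3} that the paper explicitly invokes (the paper gives no independent proof, only the reference). Your observation that the step bounding $\int_{E_\lambda}P_t(g\mathbf{1}_{X\setminus E_\lambda})\,d\mu$ by the $L^1$-norm tacitly uses $g\ge 0$ is accurate; this hypothesis is implicit in the cited source and is all that is needed for the applications here.
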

\begin{lemma}\label{L:prep4weak_mono_b}
    For any $h$ as in~\eqref{E:def_seminorm_ph} and a.e. non-negative $f\in L^1(X,\mu)$,
    \begin{equation}\label{E:prep4weak_mono_b}
        \int_0^\infty {\rm\bf Var}_{h}(\mathbf{1}_{S_\lambda(f)})\,d\lambda\leq {\rm\bf Var}_{h}(f).
    \end{equation}
\end{lemma}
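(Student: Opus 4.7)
The plan is to reduce the variation of $f$ to a $\lambda$-integral of variations of level set indicators via the classical layer-cake (coarea) identity, and then pull the $\liminf_{t\to 0^+}$ out of the $\lambda$-integral using Fatou's lemma.

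First, for non-negative reals $a,b\geq 0$ one has the coarea identity
\[
|a-b|=\int_0^\infty \bigl|\mathbf{1}_{(\lambda,\infty)}(a)-\mathbf{1}_{(\lambda,\infty)}(b)\bigr|\,d\lambda,
\]
which, applied pointwise to $f$, gives
\[
|f(x)-f(y)|=\int_0^\infty \bigl|\mathbf{1}_{S_\lambda(f)}(x)-\mathbf{1}_{S_\lambda(f)}(y)\bigr|\,d\lambda
\]
for $\mu$-a.e. $x,y\in X$. I would substitute this into the definition of the $h$-variation, write $P_t(|f-f(y)|)(y)$ as the double integral $\int_X|f(x)-f(y)|p_t(y,dx)$, and apply Fubini (all integrands are non-negative) to interchange the $\lambda$-integration with the integration against $p_t(y,dx)\,d\mu(y)$. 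This yields the exact identity
\begin{equation*}
\int_X P_t(|f-f(y)|)(y)\,d\mu(y)=\int_0^\infty \!\!\int_X P_t(|\mathbf{1}_{S_\lambda(f)}-\mathbf{1}_{S_\lambda(f)}(y)|)(y)\,d\mu(y)\,d\lambda.
\end{equation*}

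Dividing by $h(t)>0$ and taking $\liminf_{t\to 0^+}$, the right-hand side is the $\liminf$ of an integral over $\lambda\in(0,\infty)$ of non-negative integrands. Fatou's lemma then gives
\begin{equation*}
\int_0^\infty \liminf_{t\to 0^+}\frac{1}{h(t)}\int_X P_t(|\mathbf{1}_{S_\lambda(f)}-\mathbf{1}_{S_\lambda(f)}(y)|)(y)\,d\mu(y)\,d\lambda\leq {\rm\bf Var}_h(f),
\end{equation*}
and the inner $\liminf$ is exactly ${\rm\bf Var}_h(\mathbf{1}_{S_\lambda(f)})$, which is the desired inequality.

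The two places one has to be a little careful are the Fubini step, which is harmless because everything is non-negative and the kernel $p_t(y,dx)$ is a probability measure by conservativeness, and the Fatou step, where one must invoke Fatou in the direction that produces an upper bound on the outer $\liminf$ (the only direction one gets for free). No additional structure on $h$ is needed beyond $h(t)>0$ for small $t$, so the statement holds under the mere hypothesis that $h$ is non-decreasing with $h(0)=0$. I do not anticipate a real obstacle here; the argument is the $L^1$ coarea-plus-Fatou template already used to pass from functions to their level sets in related BV theories.
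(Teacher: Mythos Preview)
Your proposal is correct and follows precisely the standard coarea-plus-Fatou argument that the paper points to (the paper does not reproduce the proof but refers verbatim to \cite[Lemma~4.11]{ABCRST3}, whose proof is exactly the layer-cake identity followed by Fatou's lemma applied to the $\lambda$-integral). There is nothing to add; your treatment of the Fubini and Fatou steps is accurate, and the remark that only $h(t)>0$ for small $t$ is needed is also correct.
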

With these in hand, we are now ready to prove the weak monotonicity assumption.
\begin{proof}[Proof of Theorem~\ref{T:weak_monotonicity}]
    Let $f_n:=(f+n)_{+}$.  
    Applying~\eqref{E:prep4weak_mono_a_1} to the function $f_n$ yields
    \[
    \int_X\int_X p_t(x,y)|f_n(x)-f_n(y)|\,d\mu(y)\,d\mu(x)\leq 2\int_0^\infty\|P_t\mathbf{1}_{S_\lambda(f_n)}-\mathbf{1}_{S_\lambda(f_n)}\|_{L^1(X,\mu)}d\lambda.
    \]
    By virtue of the Pseudo-Poincar\'e inequality from Proposition~\ref{P:pseudoPI} and Lemma~\ref{L:prep4weak_mono_b} it follows from the above that
    \begin{align*}
        \int_XP_t(|f_n-f_n(x)|)\,d\mu(x)&\leq 2\int_0^\infty \|P_t\mathbf{1}_{S_\lambda(f_n)}-\mathbf{1}_{S_\lambda(f_n)}\|_{L^1(X,\mu)}\,d\lambda\\
        &\leq C \Psi_1(t)\int_0^\infty{\rm\bf Var}_{\Psi_1}(\mathbf{1}_{S_\lambda(f_n)})\,d\lambda\\
        &\leq C\Psi_1(t){\rm\bf Var}_{\Psi_1}(f_n) \leq C \Psi_1(t){\rm\bf Var}_{\Psi_1}(f).
    \end{align*}
    
    Letting $n\to\infty$ we obtain
    \[
        \int_XP_t(|f-f(x)|)\,d\mu(x)\leq C\Psi_1(t){\rm\bf Var}_{\Psi_1}(f).
    \]
    Finally, dividing both sides of the inequality by $\Psi_1(t)$, and afterwards taking $\sup_{t>0}$ yields~\eqref{E:weak_monotonicity}.
\end{proof}

As a direct consequence of Theorem~\ref{T:weak_monotonicity} we obtain the following basic properties of ${\rm Var}_{\Psi_1}(f)$.

\begin{proposition}\label{P:props_Var}
    Let $f,g\in B^{1,\Psi_1}(X)$. Then,
    \begin{enumerate}[wide=0em,label={\rm (\roman*)}]
        \item ${\rm Var}_{\Psi_1}(f)=0$ implies $f\equiv 0$,
        \item there exists $C>0$ such that 
    \begin{equation*}
        {\rm Var}_{\Psi_1}(f+g)\leq C({\rm Var}_{\Psi_1}(f)+{\rm Var}_{\Psi_1}(g)),
    \end{equation*}
        \item if in addition $f,g\in L^\infty(X,\mu)$, then
        \begin{equation*}
            {\rm Var}_{\Psi_1}(fg)\leq C\|f\|_{L^\infty(X,\mu)}{\rm Var}_{\Psi_1}(g)+\|g\|_{L^\infty(X,\mu)}{\rm Var}_{\Psi_1}(g).
        \end{equation*}
    \end{enumerate}
\end{proposition}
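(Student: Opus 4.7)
The plan is to derive all three assertions from the two-sided equivalence
\begin{equation*}
{\rm Var}_{\Psi_1}(f)\le \|f\|_{1,\Psi_1}\le C\,{\rm Var}_{\Psi_1}(f),
\end{equation*}
in which the left-hand inequality is trivial (a $\liminf$ is always dominated by the corresponding $\sup$) and the right-hand one is precisely the content of Theorem~\ref{T:weak_monotonicity}. Once this equivalence is in hand, the various seminorm-type properties of $\|\cdot\|_{1,\Psi_1}$ transfer, up to a multiplicative constant, to ${\rm Var}_{\Psi_1}$, and each of (i)--(iii) is reduced to a one-line manipulation.

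For part (i), I would observe that ${\rm Var}_{\Psi_1}(f)=0$ forces $\|f\|_{1,\Psi_1}=0$, so $\int_X P_t(|f-f(y)|)(y)\,d\mu(y)=0$ for every $t>0$. Because the lower bound in Assumption~\ref{A:HKE} guarantees that the heat kernel density $p_t(x,y)$ is strictly positive, this in turn forces $|f(x)-f(y)|=0$ for $\mu\otimes\mu$-a.e.\ $(x,y)$, i.e.\ $f$ is essentially constant. In the settings of interest (unbounded fractal cable systems, where $\mu(X)=\infty$) the $L^1$ condition then pins this constant to zero.

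Parts (ii) and (iii) share the same mechanism. For (ii), the pointwise triangle inequality
\begin{equation*}
|(f+g)(x)-(f+g)(y)|\le |f(x)-f(y)|+|g(x)-g(y)|
\end{equation*}
survives application of the positive linear operator $P_t$, integration in $y$, and division by $\Psi_1(t)$; taking $\sup_{t>0}$ yields the subadditivity $\|f+g\|_{1,\Psi_1}\le \|f\|_{1,\Psi_1}+\|g\|_{1,\Psi_1}$, and the equivalence then produces the constant $C$. For (iii), the analogous pointwise Leibniz-type estimate
\begin{equation*}
|(fg)(x)-(fg)(y)|\le \|g\|_{L^\infty(X,\mu)}|f(x)-f(y)|+\|f\|_{L^\infty(X,\mu)}|g(x)-g(y)|
\end{equation*}
leads to $\|fg\|_{1,\Psi_1}\le \|g\|_{L^\infty}\|f\|_{1,\Psi_1}+\|f\|_{L^\infty}\|g\|_{1,\Psi_1}$, and one more application of the equivalence delivers the stated bound. (I would also flag a small typographical slip in the statement: the second summand on the right of (iii) should carry ${\rm Var}_{\Psi_1}(f)$, not ${\rm Var}_{\Psi_1}(g)$.)

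The only step that is not purely formal is concluding $f\equiv 0$ rather than merely ``essentially constant'' in (i); this requires the strict positivity of $p_t$ together with the infinite-volume convention (or, equivalently, a tacit quotient modulo constants). Everything else is automatic from the seminorm structure of $\|\cdot\|_{1,\Psi_1}$ combined with Theorem~\ref{T:weak_monotonicity}, so the whole proof is essentially a short corollary of the weak monotonicity principle.
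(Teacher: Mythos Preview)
Your proposal is correct and follows essentially the same approach as the paper: establish the two-sided comparison ${\rm Var}_{\Psi_1}\le \|\cdot\|_{1,\Psi_1}\le C\,{\rm Var}_{\Psi_1}$ via Theorem~\ref{T:weak_monotonicity}, then transfer the pointwise triangle and Leibniz inequalities through the seminorm. Your treatment of (i) is in fact more careful than the paper's, which simply asserts ``$\|f\|_{1,\Psi_1}=0$ and hence $f\equiv 0$'' without spelling out the strict positivity of $p_t$ or the infinite-volume issue; and you are right that the second ${\rm Var}_{\Psi_1}(g)$ in (iii) is a typo for ${\rm Var}_{\Psi_1}(f)$.
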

\begin{proof}
    By virtue of Theorem~\ref{T:weak_monotonicity}, ${\rm Var}_{\Psi_1}(f)=0$ implies $\|f\|_{1,\Psi_1}= 0$ and hence $f\equiv 0$. This proves (i). Note next that 
    \[
    {\rm Var}_{\Psi_1}(h)\leq \|h\|_{1,\Psi_1}=\sup_{t>0}\frac{1}{\Psi_1(t)}\int_Xp_t(x,y)|f(x)-f(y)|\,d\mu(y).
    \]
    Setting $h=f+g$ and $h=fg$, respectively, and using
    \begin{align*}
     |(f+g)(x)-(f+g)(y)|&\leq |f(x)-f(y)|+|g(x)-g(y)|\\
     |fg(x)-fg(y)|&\leq |g(x)||f(x)-f(y)|+|g(y)||f(x)-f(y)|,
    \end{align*}
    the remaining properties (ii) and (iii) follow from Theorem~\ref{T:weak_monotonicity}.
\end{proof}

\subsection{Coarea formula and sets of finite perimeter}
The space $\mathbf{B}^{1,\Psi_1}(X)$ plays the role of bounded variation functions, ${\rm BV}(X)$ and the variance ${\rm Var}_{\Psi_1}$ now allows to define a notion of perimeter measure for measurable sets. In that analogy, Theorem~\ref{T:coarea_formula} provides the corresponding co-area formula, whose proof is analogue to that of~\cite[Theorem 4.15]{ABCRST3}. 

\begin{definition}
    The perimeter of a $E\subset X$ is defined as
    \begin{equation}\label{E:def_perimeter}
        {\rm Per}_{\Psi_1}(E):={\rm Var}_{\Psi_1}(\mathbf{1}_E).
    \end{equation}
\end{definition}

The details of the next proof are included for completeness. To that end, recall the notation $S_\lambda(f)$ for level sets introduced in~\eqref{E:def_level_set}.

\begin{theorem}\label{T:coarea_formula}
    For any a.e. non-negative function $f\in L^1(X,\mu)$,
    \begin{equation}\label{E:coarea_formula}
        {\rm Var}_{\Psi_1}(f)\simeq \int_0^\infty{\rm Var}_{\Psi_1}(\mathbf{1}_{S_\lambda(f)})\,d\lambda.
    \end{equation}
\end{theorem}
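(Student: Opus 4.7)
The plan is to prove the equivalence by handling its two inequalities separately. One direction, namely $\int_0^\infty {\rm Var}_{\Psi_1}(\mathbf{1}_{S_\lambda(f)})\,d\lambda \le {\rm Var}_{\Psi_1}(f)$, is already delivered by Lemma~\ref{L:prep4weak_mono_b} applied with $h=\Psi_1$, so essentially all the work lies in establishing the reverse inequality up to a multiplicative constant.

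For the reverse direction, my strategy is to combine the layer-cake type decomposition of Lemma~\ref{L:prep4weak_mono_a} with the $L^1$ pseudo-Poincar\'e inequality of Proposition~\ref{P:pseudoPI}. Taking $g\equiv 1$ in Lemma~\ref{L:prep4weak_mono_a} first gives
\[
\int_X P_t(|f-f(y)|)(y)\,d\mu(y) \le 2\int_0^\infty \|P_t \mathbf{1}_{S_\lambda(f)} - \mathbf{1}_{S_\lambda(f)}\|_{L^1(X,\mu)}\,d\lambda.
\]
Next, applying Proposition~\ref{P:pseudoPI} with $p=1$ to each indicator $\mathbf{1}_{S_\lambda(f)}$ bounds the integrand above by $C\,\Psi_1(t)\,{\rm Var}_{\Psi_1}(\mathbf{1}_{S_\lambda(f)})$. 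Substituting back, dividing both sides by $\Psi_1(t)$, and taking $\liminf_{t\to 0^+}$ makes the factor $\Psi_1(t)$ cancel and the right-hand side become independent of $t$, yielding
\[
{\rm Var}_{\Psi_1}(f) \le 2C \int_0^\infty {\rm Var}_{\Psi_1}(\mathbf{1}_{S_\lambda(f)})\,d\lambda.
\]

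The step I expect to be the most technical, though not truly deep, is ensuring the measurability of $\lambda \mapsto {\rm Var}_{\Psi_1}(\mathbf{1}_{S_\lambda(f)})$ so that the integral on the right is well-defined, and similarly justifying the use of Fubini between the $d\lambda$ integral and the $L^1$ integrals that appear in Lemma~\ref{L:prep4weak_mono_a}. Both facts should follow from the joint measurability of $(t,\lambda,x,y)\mapsto p_t(x,y)\,|\mathbf{1}_{S_\lambda(f)}(x)-\mathbf{1}_{S_\lambda(f)}(y)|$ together with the fact that a $\liminf$ of measurable functions is measurable. Conceptually the key mechanism is already in place: Proposition~\ref{P:pseudoPI} is precisely the tool that produces a factor of $\Psi_1(t)$ matching the scaling needed to turn the layer-cake estimate into a bound on the variation.
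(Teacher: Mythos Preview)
Your proposal is correct and follows essentially the paper's argument: layer-cake via Lemma~\ref{L:prep4weak_mono_a}, a pseudo-Poincar\'e bound producing the factor $\Psi_1(t)$, then divide by $\Psi_1(t)$ and take $\liminf_{t\to 0^+}$. The only cosmetic difference is that the paper routes through the elementary Lemma~\ref{pseudo-poincare 1} combined with Theorem~\ref{T:weak_monotonicity} rather than invoking Proposition~\ref{P:pseudoPI} directly on the indicators, but since Theorem~\ref{T:weak_monotonicity} is itself proved by applying Proposition~\ref{P:pseudoPI} to indicators, the underlying mechanism is identical.
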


\begin{proof}
    The left inequality in~\eqref{E:coarea_formula} is a direct consequence from Lemma~\ref{L:prep4weak_mono_b}. To prove the inequality on the right-hand side, applying~\eqref{E:prep4weak_mono_a_1} and the definition of the seminorm $\|\cdots\|_{1,\Psi_1}$ we obtain
    \begin{align*}
        \int_X P_t(f-f(x))\,d\mu(x)
        &\leq 2\int_0^\infty\int_X|(P_t-I)\mathbf{1}_{S_\lambda(f)}|\,d\mu\,d\lambda\\
        &\leq \Psi_1(t)\int_0^\infty\|\mathbf{1}_{S_\lambda(f)}\|_{1,\Psi_1}d\lambda.
    \end{align*}
    Dividing now both sides of the inequality by $\Psi_1(t)$, applying Theorem~\ref{T:weak_monotonicity}, and eventually taking $\liminf_{t\to 0^+}$, the latter implies
    \begin{align*}
        {\rm Var}_{\Psi_1}(f)
        &=\liminf_{t\to 0^+}\frac{1}{\Psi_1(t)}\int_X P_t(f-f(x))\,d\mu(x)\\
        &\leq C \liminf_{\tau\to\infty}\frac{1}{\Psi_1(\tau)}\int_XP_\tau(\mathbf{1}_{S_\lambda(f)}-\mathbf{1}_{S_\lambda(f)}(x)|)\,d\mu(x)
        ={\rm Var}_{\Psi_1}(\mathbf{1}_{S_\lambda(f)})
    \end{align*}
    as required.
\end{proof}

\subsection{Orlicz-Sobolev embedding}
In this section we turn our attention toward the Sobolev embedding in this setting. Orlicz norms become handy at this point since they are able to capture the presence of different scaling factors. In addition, the isoperimetric-type inequality that follows from the embedding, see~\eqref{E:Orlicz_isoperimetric}, provides a first non-trivial lower bound for the perimeter of sets, which is sensitive to the size of the measure of the set.

\begin{theorem}\label{T:Sobolev_embedding}
    Let the assumptions of Section~\ref{S:KS_char} and ${\rm wBE}(\kappa_1,\kappa_2)$ be satisfied with $0<\kappa_i<\alpha_i-\beta_i$, $i=1,2$. Further, let 
    \begin{equation*}
      \phi_1(s):=\sigma_{\frac{\alpha_1}{\alpha_1-\beta_1+\kappa_1},\frac{\alpha_2}{\alpha_2-\beta_2+\kappa_2}}(s)\quad\text{and}\quad \Phi_1(s):=\sigma_{1+\frac{\kappa_1-\beta_1}{\alpha_1},1+\frac{\kappa_2-\beta_2}{\alpha_2}}(s).
    \end{equation*}
    Then, there exists $C>0$ such that
    \begin{equation}\label{E:Sobolev_embedding}
        \|f\|_{\phi_1}\leq C{\rm Var}_{\Psi_1}(f)
    \end{equation}
    for any $f\in\mathbf{B}^{1,\Psi_1}(X)$. In particular, for any set $E\subset X$ of finite $\Psi_1$ perimeter,
    \begin{equation}\label{E:Orlicz_isoperimetric}
        \Phi_1(\mu(E))\leq C {\rm Per}_{\Psi_1}(E).
    \end{equation}
\end{theorem}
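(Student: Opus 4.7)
The plan is to deduce~\eqref{E:Sobolev_embedding} from Theorem~\ref{T:pol} applied with $p=1$, $h=\Psi_1$ and $\phi=\phi_1$, and then combine it with Theorem~\ref{T:weak_monotonicity} to upgrade $\|f\|_{1,\Psi_1}$ on the right-hand side to ${\rm Var}_{\Psi_1}(f)$. The first task is to check the hypotheses of Theorem~\ref{T:pol}. Assumption~\ref{A:PPI} at $p=1$ with $h=\Psi_1$ is precisely the content of Theorem~\ref{T:weak_monotonicity}, so it is free. The ultracontractive estimate~\eqref{eq:subGauss-upper4} with $V(t)\asymp\sigma_{\alpha_1/\beta_1,\alpha_2/\beta_2}(t)$ follows from the diagonal of the heat kernel upper bound in Assumption~\ref{A:HKE}; it is increasing with $V(t)\to\infty$. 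Doubling~\eqref{E:Cond_Delta2} of $\phi_1$ is automatic since it is piecewise a positive power.

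The substantive check is the compatibility condition~\eqref{E:ass_hV}. Using Proposition~\ref{P:props_sigma}, a direct calculation gives
\[
V^{-1}(s)\asymp \sigma_{\beta_1/\alpha_1,\beta_2/\alpha_2}(s), \qquad (h\circ V^{-1})(s)\asymp \sigma_{(\beta_1-\kappa_1)/\alpha_1,(\beta_2-\kappa_2)/\alpha_2}(s),
\]
while inverting $\phi_1$ yields $\phi_1^{-1}(u)=\sigma_{(\alpha_1-\beta_1+\kappa_1)/\alpha_1,(\alpha_2-\beta_2+\kappa_2)/\alpha_2}(u)$. Substituting $u=1/s$ and multiplying by $s$ identifies $s\phi_1^{-1}(1/s)$ as a piecewise power whose exponents $(\beta_i-\kappa_i)/\alpha_i$ match those of $(h\circ V^{-1})(s)$; the hypothesis $0<\kappa_i<\alpha_i-\beta_i$ is precisely what guarantees positivity of every exponent that appears and delivers the required inequality~\eqref{E:ass_hV}. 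Theorem~\ref{T:pol} then yields $\|f\|_{\phi_1}\leq C\|f\|_{1,\Psi_1}$, and chaining with the bound $\|f\|_{1,\Psi_1}\leq C\,{\rm Var}_{\Psi_1}(f)$ of Theorem~\ref{T:weak_monotonicity} produces~\eqref{E:Sobolev_embedding}.

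The isoperimetric inequality~\eqref{E:Orlicz_isoperimetric} then follows by applying~\eqref{E:Sobolev_embedding} to $f=\mathbf{1}_E$, which gives $\|\mathbf{1}_E\|_{\phi_1}\leq C\,{\rm Per}_{\Psi_1}(E)$. Lemma~\ref{P:Orlicz_properties} rewrites the left-hand side as $\mu(E)\psi_1^{-1}(1/\mu(E))$, where $\psi_1$ denotes the Young conjugate of $\phi_1$, and the classical duality $s\leq\phi_1^{-1}(s)\psi_1^{-1}(s)\leq 2s$ produces $\psi_1^{-1}(t)\asymp t/\phi_1^{-1}(t)$. Substituting the explicit form of $\phi_1^{-1}$ from the previous step, one verifies in both regimes $\mu(E)\leq 1$ and $\mu(E)>1$ that
\[
\mu(E)\,\psi_1^{-1}(1/\mu(E))\asymp \Phi_1(\mu(E)),
\]
from which~\eqref{E:Orlicz_isoperimetric} is immediate.

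The main obstacle is the piecewise exponent bookkeeping when composing, inverting, and Fenchel-transforming the $\sigma_{\cdot,\cdot}$ functions; all of the substantive analytic content is inherited from Theorems~\ref{T:pol} and~\ref{T:weak_monotonicity}, and the role of the definitions of $\phi_1$ and $\Phi_1$ is precisely to bookkeep the exponents arising on each side.
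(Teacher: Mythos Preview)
Your proposal is correct and follows essentially the same route as the paper: verify the hypotheses of Theorem~\ref{T:pol} with $p=1$, $h=\Psi_1$, $V=\sigma_{\alpha_1/\beta_1,\alpha_2/\beta_2}$ and $\phi=\phi_1$ (using Theorem~\ref{T:weak_monotonicity} for Assumption~\ref{A:PPI}, the heat kernel upper bound for ultracontractivity, and the exponent bookkeeping in Proposition~\ref{P:props_sigma} for~\eqref{E:ass_hV}), then specialize to $f=\mathbf 1_E$ and invoke Lemma~\ref{P:Orlicz_properties}. The only cosmetic difference is that the paper computes the Young conjugate $\psi_1\asymp\sigma_{\alpha_1/(\beta_1-\kappa_1),\alpha_2/(\beta_2-\kappa_2)}$ directly, whereas you reach $\psi_1^{-1}$ through the duality relation $s\leq\phi_1^{-1}(s)\psi_1^{-1}(s)\leq 2s$; both routes are equivalent.
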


\begin{proof}
To prove~\eqref{E:Sobolev_embedding}, it suffices to find the function $\phi$ verifying the hypothesis from Theorem~\ref{T:pol}. The heat kernel estimate~\eqref{E:assump_HKE} implies \eqref{eq:subGauss-upper4} with $V(t)=\sigma_{\frac{\alpha_1}{\beta_1},\frac{\alpha_2}{\beta_2}}(t)$, while the monotonicity property from Theorem~\ref{T:weak_monotonicity} gives $h_1(t)=\Psi_1(t)=\sigma_{1-\frac{\kappa_1}{\beta_1},1-\frac{\kappa_2}{\beta_2}}(t)$. By virtue of Proposition~\ref{P:props_sigma},
\begin{align*}
\sigma_{1-\frac{\kappa_1}{\beta_1},1-\frac{\kappa_2}{\beta_2}}(\sigma_{\frac{\alpha_1}{\beta_1},\frac{\alpha_2}{\beta_2}}^{-1}(s))
&=\sigma_{1-\frac{\kappa_1}{\beta_1},1-\frac{\kappa_2}{\beta_2}}(\sigma_{\frac{\beta_1}{\alpha_1},\frac{\beta_2}{\alpha_2}}(s))
=\sigma_{\frac{\beta_1-\kappa_1}{\alpha_1},\frac{\beta_2-\kappa_2}{\alpha_2}}(s)\\
&=s\sigma_{\frac{\beta_1-\kappa_1}{\alpha_1}-1,\frac{\beta_2-\kappa_2}{\alpha_2}-1}(s)
=s\sigma_{1-\frac{\beta_1-\kappa_1}{\alpha_1},1-\frac{\beta_2-\kappa_2}{\alpha_2}}(1/s)\\
&=s\sigma_{\frac{\alpha_1}{\alpha_1-\beta_1+\kappa_1},\frac{\alpha_2}{\alpha_2-\beta_2+\kappa_2}}^{-1}(1/s)
\end{align*}
hence the hypothesis from Theorem~\ref{T:pol} are satisfied with $\phi(s)=\phi_1(s)$. 

\medskip

To obtain~\eqref{E:Orlicz_isoperimetric}, we first compute the Young conjugate of $\phi_1$, which is bounded above and below by $\psi_1=\sigma_{\frac{\alpha_1}{\beta_1-\kappa_1},\frac{\alpha_2}{\beta_2-\kappa_2}}$. By virtue of~\eqref{E:Norm_1E}, it now follows that $\|\mathbf{1}_E\|_{\phi_1}$ is bounded above and below by
\begin{align*}
&\mu(E)\sigma_{\frac{\alpha_1}{\beta_1-\kappa_1},\frac{\alpha_2}{\beta_2-\kappa_2}}^{-1}(1/\mu(E))
=\mu(E)\sigma_{\frac{\beta_1-\kappa_1}{\alpha_1},\frac{\beta_2-\kappa_2}{\alpha_2}}(1/\mu(E))\\
&=\mu(E)\sigma_{\frac{\kappa_1-\beta_1}{\alpha_1},\frac{\kappa_2-\beta_2}{\alpha_2}}(\mu(E))
=\sigma_{1+\frac{\kappa_1-\beta_1}{\alpha_1},1+\frac{\kappa_2-\beta_2}{\alpha_2}}(\mu(E)).
\end{align*}
Setting $f=\mathbf{1}_E$ in~\eqref{E:Sobolev_embedding} finally yields~\eqref{E:Orlicz_isoperimetric}.
\end{proof}

\begin{remark}\label{R:sanity_check_Orlicz-Sobolev}
    In the case of standard sub-Gaussian heat kernel estimates and the standard volume doubling property, $\alpha_1=\alpha_2=d_H$ and $\beta_1=\beta_2=d_W$. Under the weak Bakry-\'Emery condition ${\rm wBE}(\kappa)$, it holds that $\frac{\kappa_1}{\beta_1}=\frac{\kappa_2}{\beta_2}=\frac{\kappa}{d_W}$ and hence
    \[
    \phi_1(s)=s^{\frac{d_H}{d_H-d_W+\kappa}}\qquad\text{and}\qquad\psi_1(s)=s^{\frac{d_H}{d_W-\kappa}}.
    \]
    Thus, the embedding~\eqref{E:Sobolev_embedding} reads 
    \[
    \|f\|_{L^{\frac{d_H}{d_H-d_W+\kappa}}}\leq C{\rm\bf Var}_{\Psi_1}(f)
    \]
    and the isoperimetric inequality~\eqref{E:Orlicz_isoperimetric} is
    \[
    \mu(E)^{1+\frac{\kappa-d_W}{d_H}}\leq C {\rm Per}_{\Psi_1}(E),
    \]
    which is equivalent to~\cite[Theorem 4.18]{ABCRST3}.
\end{remark}

We finish this section by shortly addressing the degenerate case when $\kappa_i=\alpha_i-\beta_i$, $i=1,2$. Here one can follow verbatim the proof of~\cite[Proposition 4.20]{ABCRST3} to estimate the variance ${\rm\bf Var}_{\Psi_1}(f)$ from below by the oscillations of $f$
\[
{\rm \bf Osc}(f):={\rm ess}\!-\!\sup_{x,y\in X}|f(x)-f(y)|.
\]

\begin{proposition}\label{P:Var_vs_osc}
    Let the assumptions of Section~\ref{S:KS_char} and ${\rm wBE}(\kappa_1,\kappa_2)$ be satisfied with $0<\kappa_i=\alpha_i-\beta_i$, $i=1,2$. Then, there exists $C>0$ such that 
    \[
    {\rm \bf Osc}(f)\leq C {\rm\bf Var}_{\Psi_1}(f)
    \]
    for any $f\in \mathbf{B}^{1,\Psi_1}(X)$ and $\mathbf{B}^{1,\Psi_1}(X)\subset L^\infty (X,\mu)$.
\end{proposition}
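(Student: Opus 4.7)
The plan is to adapt the strategy of~\cite[Proposition 4.20]{ABCRST3}: split $f$ into positive and negative parts, apply the coarea formula to reduce the oscillation estimate to a uniform positive lower bound on the $\Psi_1$-perimeter of nontrivial Borel sets, and establish this bound by trading the H\"older regularity of $P_t\mathbf{1}_E$ furnished by the weak Bakry-\'Emery condition against the $L^1$-convergence rate of $P_t\mathbf{1}_E$ to $\mathbf{1}_E$ coming from the pseudo-Poincar\'e inequality.

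Concretely, Theorem~\ref{T:coarea_formula} applied to $f^\pm$ gives
\[
{\rm\bf Var}_{\Psi_1}(f)\gtrsim\int_{\mathbb R}{\rm Per}_{\Psi_1}\bigl(S_\lambda(f)\bigr)\,d\lambda,
\]
where $S_\lambda(f)$ and its complement both have positive measure for every $\lambda$ strictly between $\mathrm{ess\,inf}\,f$ and $\mathrm{ess\,sup}\,f$. Thus it suffices to produce $c_0>0$, independent of $E$, such that ${\rm Per}_{\Psi_1}(E)\geq c_0$ whenever $\mu(E)>0$ and $\mu(E^c)>0$; this yields ${\rm\bf Osc}(f)\leq C\,{\rm\bf Var}_{\Psi_1}(f)$ together with the inclusion $\mathbf{B}^{1,\Psi_1}(X)\subset L^\infty(X,\mu)$.

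To obtain the uniform perimeter bound, I would set $v_t:=P_t\mathbf{1}_E\in[0,1]$ and combine the two estimates
\[
|v_t(x)-v_t(y)|\leq C_{\rm wBE}\,\frac{\sigma_{\kappa_1,\kappa_2}(d(x,y))}{\sigma_{\kappa_1/\beta_1,\kappa_2/\beta_2}(t)},\qquad \|v_t-\mathbf{1}_E\|_{L^1(X,\mu)}\leq C\,\Psi_1(t)\,{\rm Per}_{\Psi_1}(E),
\]
coming respectively from ${\rm wBE}(\kappa_1,\kappa_2)$ and from Proposition~\ref{P:pseudoPI} together with Theorem~\ref{T:weak_monotonicity}. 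For Lebesgue points $x_0\in E$ and $y_0\in E^c$, the difference of averages $\fint_{B(x_0,r)}\mathbf{1}_E-\fint_{B(y_0,r)}\mathbf{1}_E$ is close to $1$ when $r$ is small, and interposing the corresponding averages of $v_t$ via the triangle inequality, one bounds the H\"older contribution by the first estimate and, using the volume regularity~\eqref{E:VR_cond}, the $L^1$ contribution by $C\,\Psi_1(t)\,{\rm Per}_{\Psi_1}(E)/\sigma_{\alpha_1,\alpha_2}(r)$. A calibrated choice of $r$ and $t$ (with $r\simeq d(x_0,y_0)$ and $t$ chosen so that the H\"older error is at most $1/4$) then forces ${\rm Per}_{\Psi_1}(E)\geq c_0$.

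The main obstacle is precisely this calibration: one must choose $r$ and $t$ simultaneously so the two error terms remain controlled while the resulting inequality yields a constant $c_0$ independent of $E$ and of the Lebesgue points used. It is the degenerate relation $\kappa_i=\alpha_i-\beta_i$ between the weak Bakry-\'Emery exponents and the geometric exponents that aligns the scales $\sigma_{\alpha_1,\alpha_2}(r)$, $\sigma_{\kappa_1/\beta_1,\kappa_2/\beta_2}(t)$, and $\Psi_1(t)$ so as to eliminate all $r$-, $t$-, and distance-dependence in the balance. Some additional bookkeeping is required to handle the transitions of the two-regime $\sigma$-functions across $r=1$ and $t=1$, but these are non-essential since the Lebesgue density information is accessed in the limit $r\to 0$, inside the local regime of each $\sigma$.
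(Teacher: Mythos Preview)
Your overall strategy—coarea reduction to a uniform lower bound on ${\rm Per}_{\Psi_1}(E)$, then obtaining that bound by playing the H\"older regularity of $v_t=P_t\mathbf{1}_E$ (from ${\rm wBE}$) against its $L^1$-proximity to $\mathbf{1}_E$ (from the pseudo-Poincar\'e inequality)—is exactly what the paper points to in~\cite[Proposition 4.20]{ABCRST3}.

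The gap is the calibration. Once Lebesgue points $x_0\in E$, $y_0\in E^c$ are fixed, the density condition $\bigl|\fint_{B(x_0,r)}\mathbf{1}_E-\fint_{B(y_0,r)}\mathbf{1}_E\bigr|\ge 1/2$ only holds for $r$ below a threshold that depends on the specific points and bears no relation to $D:=d(x_0,y_0)$. After fixing the H\"older error at $1/4$ (which pins $t$ to $D$), the $L^1$ term gives
\[
{\rm Per}_{\Psi_1}(E)\ \gtrsim\ \frac{\sigma_{\alpha_1,\alpha_2}(r)}{\Psi_1(t)},
\]
and even when the degenerate exponent relation aligns $\Psi_1(t)$ with $\sigma_{\alpha_1,\alpha_2}(D)$, you obtain only ${\rm Per}_{\Psi_1}(E)\gtrsim \bigl(r/D\bigr)^{\alpha_1\wedge\alpha_2}$, which degenerates as the averaging radius $r\to 0$. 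Taking $x_0,y_0$ close does not rescue this, since Lebesgue differentiation provides no uniform rate at which the density threshold for $r$ can be compared to $D$; the two roles you assign to $r$ are genuinely incompatible.

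The argument in~\cite[Proposition 4.20]{ABCRST3} avoids the two-scale tension by working with a \emph{single} point. Connectedness of $X$ (a consequence of the two-sided heat-kernel bounds) together with the continuity of $v_t$ yields, for each sufficiently small $t$, a point $z$ with $v_t(z)=1/2$; then ${\rm wBE}(\kappa_1,\kappa_2)$ traps $v_t$ in $(1/4,3/4)$ on a ball $B(z,\rho)$ with $\sigma_{\kappa_1,\kappa_2}(\rho)\simeq\sigma_{\kappa_1/\beta_1,\kappa_2/\beta_2}(t)$, so that
\[
C\,\Psi_1(t)\,{\rm Per}_{\Psi_1}(E)\ \ge\ \|v_t-\mathbf{1}_E\|_{L^1}\ \ge\ \tfrac14\,\mu\bigl(B(z,\rho)\bigr)\ \simeq\ \sigma_{\alpha_1,\alpha_2}(\rho).
\]
The critical exponent relation makes the ratio $\sigma_{\alpha_1,\alpha_2}(\rho)/\Psi_1(t)$ independent of $t$, and the uniform bound ${\rm Per}_{\Psi_1}(E)\ge c_0$ follows with no Lebesgue-point input.
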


\section*{Appendix}
In this section we record several technical lemmata that are repeatedly applied in Section~\ref{S:KS_char}. Recall the definition of the function 
\[
\sigma_{\alpha,\beta} (r)=r^\alpha 1_{[0,1]}(r) +r^\beta 1_{(1,+\infty)}(r)
\]
for any $\alpha,\beta>0$.

\begin{lemma}\label{estimate comp 1}
Let $C>0$,  $\alpha_1, \alpha_2 > 0$ and $\beta_1,\beta_2>1$. There exists a constant $c>0$ such that for every $t >0$
\begin{align*}
 \int_{0}^{+\infty}\sigma_{\alpha_1,\alpha_2}(u)\exp\Big(\!\!-C t \sigma_{\beta_2/(\beta_2-1),\beta_1/(\beta_1-1) } \left( \frac{u}{t}\right)\Big) \frac{du}{u} \le c \sigma_{\alpha_1/\beta_1,\alpha_2/\beta_2 }(t)
\end{align*}
\end{lemma}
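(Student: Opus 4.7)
I would bound the integral by splitting the domain into sub-regions determined by the breakpoints of $\sigma_{\alpha_1,\alpha_2}$ (at $u=1$) and of $\sigma_{\beta_2/(\beta_2-1),\beta_1/(\beta_1-1)}(u/t)$ (at $u=t$), and by treating the two cases $t\leq 1$ and $t>1$ separately, since $\sigma_{\alpha_1/\beta_1,\alpha_2/\beta_2}(t)$ is itself piecewise. On each sub-interval both $\sigma$-factors reduce to single powers $u^a$ and $(u/t)^b$ with $b\in\{\beta_2/(\beta_2-1),\beta_1/(\beta_1-1)\}$, and the change of variable $s=Cu^b t^{1-b}$ turns the piece into a Gamma-type integral:
\[
\int u^{a-1}e^{-Cu^bt^{1-b}}\,du \;=\; \frac{t^{a/\beta_i}}{bC^{a/b}}\int s^{a/b-1}e^{-s}\,ds,
\]
where $i\in\{1,2\}$ corresponds to the chosen $b$. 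The target power is thus $t^{a/\beta_i}$ and everything reduces to checking that the $a/\beta_i$ that appears in the dominant region matches $\alpha_1/\beta_1$ (for $t\le 1$) or $\alpha_2/\beta_2$ (for $t>1$), and that the remaining pieces are subdominant.

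For $t\leq 1$ I would split as $(0,t]\cup(t,1]\cup(1,\infty)$. On $(0,t]$ the product $Cu^bt^{1-b}\leq Ct\leq C$ stays bounded, so crudely $\int_0^t u^{\alpha_1-1}\,du=t^{\alpha_1}/\alpha_1\leq t^{\alpha_1/\beta_1}/\alpha_1$ (since $\alpha_1\geq \alpha_1/\beta_1$ and $t\leq 1$). On $(t,1]$ the relevant exponent is $b=\beta_1/(\beta_1-1)$, and the substitution above gives a contribution $\leq C\, t^{\alpha_1/\beta_1}\int_0^{\infty}s^{\alpha_1/b-1}e^{-s}\,ds=C\,t^{\alpha_1/\beta_1}\Gamma(\alpha_1/b)$. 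On $(1,\infty)$ the lower endpoint of the $s$-integral becomes $s=Ct^{1-b}$, which is large because $b>1$ and $t\leq 1$; hence the integral is of order $t^{\alpha_2/\beta_1}e^{-c t^{-1/(\beta_1-1)}}$, which is $o(t^{\alpha_1/\beta_1})$. Summing the three bounds yields $\leq c\,t^{\alpha_1/\beta_1}=c\,\sigma_{\alpha_1/\beta_1,\alpha_2/\beta_2}(t)$.

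For $t>1$ I would split as $(0,1]\cup(1,t]\cup(t,\infty)$ and argue symmetrically. On $(0,1]$ the exponent is again bounded, so $\int_0^1 u^{\alpha_1-1}\,du=1/\alpha_1\leq t^{\alpha_2/\beta_2}/\alpha_1$ since $t^{\alpha_2/\beta_2}\geq 1$. On $(1,t]$ one uses $a=\alpha_2$, $b=\beta_2/(\beta_2-1)$; the substitution yields the dominant contribution of order $t^{\alpha_2/\beta_2}$ (the inner $s$-integral is bounded by $\Gamma(\alpha_2/b)$). On $(t,\infty)$ the lower $s$-endpoint is $Ct\geq C$, and the substitution gives a factor $t^{\alpha_2/\beta_1}\int_{Ct}^{\infty}s^{\alpha_2/b-1}e^{-s}\,ds$, which decays exponentially in $t$ and is therefore dominated by $t^{\alpha_2/\beta_2}$.

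The substitution itself is routine; the main obstacle is the \emph{bookkeeping}, namely ensuring that in each sub-region the correct piecewise branches of $\sigma_{\alpha_1,\alpha_2}$ and of $\sigma_{\beta_2/(\beta_2-1),\beta_1/(\beta_1-1)}$ are used, that the exponent identity $a(b-1)/b=a/\beta_i$ is applied with the right $\beta_i$, and that the $s$-integrals arising from sub-intervals whose endpoints do not stretch to $0$ or $\infty$ are correctly identified as (incomplete) Gamma integrals whose tails contribute only exponentially small error terms.
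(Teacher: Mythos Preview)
Your approach is essentially identical to the paper's: the same case split $t\le 1$ versus $t>1$, the same three-piece decomposition at the breakpoints $u=t$ and $u=1$, the same crude bound on the innermost piece (exponential $\le 1$), the same scaling substitution on the middle piece to extract $t^{\alpha_i/\beta_i}$, and the same observation that the outermost piece carries an exponentially small prefactor. The only cosmetic difference is that you phrase the substitution via $s=Cu^{b}t^{1-b}$ and Gamma integrals, whereas the paper writes $u=t^{1/\beta_i}s$; the resulting bounds are the same.
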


\begin{proof}
We first assume that $0 \le t \le 1$ and split the integral $\int_0^{+\infty}$ into $\int_0^t +\int_t^1 +\int_1^{+\infty}$. The first integral is given by
\begin{align*}
A(t):=\int_{0}^{t} u^{\alpha_1} \exp\Big(\!\!-C \Big(\frac{u^{\beta_2}}{t}\Big)^{\frac{1}{\beta_2-1}}\Big) \frac{du}{u}
\end{align*}
Using the change of variable $u=t^{1/\beta_2} s$, we get
\begin{align*}
A(t)=t^{\frac{\alpha_1}{\beta_2}}\int_0^{t^{1-\frac{1}{\beta_2}}} s^{\alpha_1-1} \exp\Big(\!\!-C s^{\frac{\beta_2}{\beta_2-1}}\Big) ds  \le  t^{\frac{\alpha_1}{\beta_2}}\int_0^{t^{1-\frac{1}{\beta_2}}} s^{\alpha_1-1}  ds=\frac{t^{\alpha_1}}{\alpha_1}.
\end{align*}
The second integral is given by
\begin{align*}
B(t):=\int_t^1 u^{\alpha_1} \exp\Big(\!\!-C \Big(\frac{u^{\beta_1}}{t}\Big)^{\frac{1}{\beta_1-1}}\Big) \frac{du}{u}.
\end{align*}
It can be estimated as follows
\begin{align*}
B(t) \le \int_0^{+\infty} u^{\alpha_1} \exp\Big(\!\!-C \Big(\frac{u^{\beta_1}}{t}\Big)^{\frac{1}{\beta_1-1}}\Big) \frac{du}{u} = t^{\frac{\alpha_1}{\beta_1}}  \int_0^{+\infty} u^{\alpha_1} \exp\Big(\!\!-C u^{\frac{\beta_1}{\beta_1-1}}\Big) \frac{du}{u}.
\end{align*}
For the third integral, we have
\[
C(t):=\int_1^{+\infty} u^{\alpha_2} \exp\Big(\!\!-C \Big(\frac{u^{\beta_1}}{t}\Big)^{\frac{1}{\beta_1-1}}\Big)\frac{du}{u},
\]
which can be estimated as follows
\[
C(t) \le \exp\Big(\!\!-\frac{C}{2t^{\frac{1}{\beta_1-1}}} \Big) \int_0^{+\infty} u^{\alpha_2} \exp\Big(\!\!-\frac{C}{2} \Big(\frac{u^{\beta_1}}{t}\Big)^{\frac{1}{\beta_1-1}}\Big)\frac{du}{u}\le c t^{\frac{\alpha_2}{\beta_1}} \exp\Big(\!\!-\frac{C}{2t^{\frac{1}{\beta_1-1}}} \Big).
\]
Putting the estimates together yields that for $0 \le t \le 1$
\[
 \int_{0}^{+\infty}\sigma_{\alpha_1,\alpha_2}(u)\exp\Big(\!\!-C t \sigma_{\beta_2/(\beta_2-1),\beta_1/(\beta_1-1) } \left( \frac{u}{t}\right)\Big) \frac{du}{u} \le A(t) +B(t) +C(t) \le C  t^{\frac{\alpha_1}{\beta_1}}.
\]
For the case $t \ge 1$, we split the integral $\int_0^{+\infty}$ as $\int_0^1 +\int_1^t +\int_t^{+\infty}$. The first integral is
\[
A(t):=\int_0^1 u^{\alpha_1} \exp\Big(\!\!-C \Big(\frac{u^{\beta_2}}{t}\Big)^{\frac{1}{\beta_2-1}}\Big) \frac{du}{u} \le \int_0^1 u^{\alpha_1-1} du.
\]
The second integral is 
\[
B(t):=\int_1^t  u^{\alpha_2} \exp\Big(\!\!-C \Big(\frac{u^{\beta_2}}{t}\Big)^{\frac{1}{\beta_2-1}}\Big) \frac{du}{u}.
\]
We estimate $B(t)$ as
\[
B(t) \le \int_0^{+\infty}  u^{\alpha_2} \exp\Big(\!\!-C \Big(\frac{u^{\beta_2}}{t}\Big)^{\frac{1}{\beta_2-1}}\Big) \frac{du}{u} \le C t^{\frac{\alpha_2}{\beta_2}}.
\]
Finally, the last integral is given by 
\[
C(t)=\int_t^{+\infty} u^{\alpha_2} \exp\Big(\!\!-C \Big(\frac{u^{\beta_1}}{t}\Big)^{\frac{1}{\beta_1-1}}\Big)\frac{du}{u}.
\]
Using the change of variable $u=t^{1/\beta_1} s$, we get
\begin{align*}
C(t)&=t^{\frac{\alpha_2}{\beta_1}}\int_{t^{1-\frac{1}{\beta_1}}}^{+\infty} s^{\alpha_2-1} \exp\Big(\!\!-C s^{\frac{\beta_1}{\beta_1-1}}\Big) ds \\
 &\le t^{\frac{\alpha_2}{\beta_1}}\int_{t^{1-\frac{1}{\beta_1}}}^{+\infty} s^{\alpha_2-1} \exp\Big(\!\!-C s^{\frac{\beta_1}{\beta_1-1}}\Big) ds \\
 & \le t^{\frac{\alpha_2}{\beta_1}} e^{-\frac{C}{2} t}\int_{0}^{+\infty} s^{\alpha_2-1} \exp\Big(\!\!-\frac{C}{2} s^{\frac{\beta_1}{\beta_1-1}}\Big) ds. 
\end{align*}
Putting the estimates together yields that for $t \ge 1$
\[
 \int_{0}^{+\infty}\sigma_{\alpha_1,\alpha_2}(u)\exp\Big(\!\!-C t \sigma_{\beta_2/(\beta_2-1),\beta_1/(\beta_1-1) } \left( \frac{u}{t}\right)\Big) \frac{du}{u} \le A(t) +B(t) +C(t) \le C  t^{\frac{\alpha_2}{\beta_2}}.
\]

\end{proof}

\begin{lemma}
Let $C>0$ $\alpha_1, \alpha_2 > 0$, $\beta_1,\beta_2>1$ and $\nu_1,\nu_2>0$. There exists a constant $c>0$ such that 
\begin{multline*}
 \sum_{k=1}^{+\infty} \exp\Big(\!\!-C t \sigma_{\beta_2/(\beta_2-1),\beta_1/(\beta_1-1) } \left( \frac{2^{-k}r}{t}\right)\Big) \sigma_{\nu_1\beta_1,\nu_2\beta_2}(2^{1-k}r)^p\sigma_{\alpha_1,\alpha_2 }(2^{1-k}r) \\
  \le  c \sigma_{\alpha_1/\beta_1,\alpha_2/\beta_2 }(t) \sigma_{\nu_1,\nu_2}(t)^p
\end{multline*}
for any $r,t >0$.
\end{lemma}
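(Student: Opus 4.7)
The plan is to convert the dyadic sum into an integral on $(0, +\infty)$ and then invoke Lemma \ref{estimate comp 1}. Set $u_k := 2^{1-k} r$ and $I_k := [2^{-k}r,\, 2^{1-k}r] = [u_k/2,\, u_k]$, so that $\int_{I_k} du/u = \log 2$ and the intervals $\{I_k\}_{k\ge 1}$ tile $(0, r]$. The idea is to bound each summand by an integral over $I_k$ and then sum.

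First, by monotonicity of $\sigma_{\alpha_1,\alpha_2}$ and $\sigma_{\nu_1\beta_1,\nu_2\beta_2}$, together with the polynomial doubling bound in Proposition \ref{P:props_sigma}(iv), there are constants $C_1, C_2$ such that for $u \in I_k$,
\[
\sigma_{\alpha_1,\alpha_2}(u_k) \le C_1\, \sigma_{\alpha_1,\alpha_2}(u), \qquad \sigma_{\nu_1\beta_1,\nu_2\beta_2}(u_k) \le C_2\, \sigma_{\nu_1\beta_1,\nu_2\beta_2}(u).
\]
For the exponential factor, note that for $u \in I_k$ one has $u/t \le 2 \cdot (2^{-k} r / t)$, so Proposition \ref{P:props_sigma}(iv) applied to $\sigma_{\beta_2/(\beta_2-1),\,\beta_1/(\beta_1-1)}$ yields a constant $C_0$ with
\[
\sigma_{\beta_2/(\beta_2-1),\,\beta_1/(\beta_1-1)}(u/t) \le C_0\, \sigma_{\beta_2/(\beta_2-1),\,\beta_1/(\beta_1-1)}(2^{-k} r/t),
\]
and therefore $\exp\bigl(-C t\, \sigma_{...}(2^{-k} r/t)\bigr) \le \exp\bigl(-(C/C_0)\, t\, \sigma_{...}(u/t)\bigr)$. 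Combining the three bounds and integrating over $I_k$ gives
\[
\exp\!\Big(-Ct\,\sigma_{\frac{\beta_2}{\beta_2-1},\frac{\beta_1}{\beta_1-1}}\!\big(\tfrac{2^{-k}r}{t}\big)\Big) \sigma_{\nu_1\beta_1,\nu_2\beta_2}(u_k)^p \sigma_{\alpha_1,\alpha_2}(u_k) \le C'\! \int_{I_k}\!\! \sigma_{\nu_1\beta_1,\nu_2\beta_2}(u)^p \sigma_{\alpha_1,\alpha_2}(u) e^{-\frac{C}{C_0} t \sigma_{...}(u/t)} \frac{du}{u}.
\]

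Summing over $k \ge 1$ and extending the integration to $(0,+\infty)$, and then using Proposition \ref{P:props_sigma}(ii)--(iii) to collapse the product of $\sigma$'s into
\[
\sigma_{\nu_1\beta_1,\nu_2\beta_2}(u)^p\, \sigma_{\alpha_1,\alpha_2}(u) = \sigma_{p\nu_1\beta_1 + \alpha_1,\, p\nu_2\beta_2 + \alpha_2}(u),
\]
the left-hand side of the claimed inequality is bounded by a constant times
\[
\int_0^{+\infty} \sigma_{p\nu_1\beta_1 + \alpha_1,\, p\nu_2\beta_2 + \alpha_2}(u)\, \exp\!\Big(\!-\tfrac{C}{C_0}\, t\, \sigma_{\frac{\beta_2}{\beta_2-1},\frac{\beta_1}{\beta_1-1}}(u/t)\Big) \frac{du}{u}.
\]
By Lemma \ref{estimate comp 1} applied with $\tilde{\alpha}_1 = p\nu_1\beta_1 + \alpha_1$ and $\tilde{\alpha}_2 = p\nu_2\beta_2 + \alpha_2$, this integral is at most $c\, \sigma_{\tilde{\alpha}_1/\beta_1,\, \tilde{\alpha}_2/\beta_2}(t)$. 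Since $\tilde{\alpha}_i/\beta_i = p\nu_i + \alpha_i/\beta_i$, one more application of Proposition \ref{P:props_sigma}(ii)--(iii) yields
\[
\sigma_{\tilde{\alpha}_1/\beta_1,\, \tilde{\alpha}_2/\beta_2}(t) = \sigma_{p\nu_1, p\nu_2}(t)\cdot \sigma_{\alpha_1/\beta_1,\alpha_2/\beta_2}(t) = \sigma_{\nu_1,\nu_2}(t)^p\, \sigma_{\alpha_1/\beta_1,\alpha_2/\beta_2}(t),
\]
which is precisely the desired bound.

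The main technical point is the comparison of the exponential factor across the dyadic interval $I_k$: one must verify that replacing the discrete argument $2^{-k}r/t$ by $u/t$ costs only a multiplicative constant inside the exponent. This is exactly where the polynomial (doubling-type) growth of $\sigma_{\beta_2/(\beta_2-1),\,\beta_1/(\beta_1-1)}$ from Proposition \ref{P:props_sigma}(iv) is essential; once this is in place, the rest is bookkeeping with the multiplicative identities in Proposition \ref{P:props_sigma} and a single appeal to Lemma \ref{estimate comp 1}.
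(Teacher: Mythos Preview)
Your proof is correct and follows the same approach as the paper: convert the dyadic sum into an integral over $(0,+\infty)$, collapse the product of $\sigma$'s via Proposition~\ref{P:props_sigma}, and apply Lemma~\ref{estimate comp 1} with exponents $p\nu_i\beta_i+\alpha_i$. The paper asserts the sum-to-integral bound in one line, while you carefully justify it via the doubling estimate of Proposition~\ref{P:props_sigma}(iv) on each dyadic interval $I_k$; this extra care is warranted and the argument is sound.
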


\begin{proof}
We have
\begin{align*}
 & \sum_{k=1}^{+\infty} \exp\Big(\!\!-C t \sigma_{\beta_2/(\beta_2-1),\beta_1/(\beta_1-1) } \left( \frac{2^{-k}r}{t}\right)\Big) \sigma_{\nu_1\beta_1,\nu_2\beta_2}(2^{1-k}r)^p\sigma_{\alpha_1,\alpha_2 }(2^{1-k}r) \\
 \le &c \int_0^r \sigma_{\nu_1\beta_1,\nu_2\beta_2}(u)^p \sigma_{\alpha_1,\alpha_2}(u)\exp\Big(\!\!-C t \sigma_{\beta_2/(\beta_2-1),\beta_1/(\beta_1-1) } \left( \frac{u}{t}\right)\Big) \frac{du}{u} \\
 \le & c \int_0^{+\infty} \sigma_{\nu_1\beta_1,\nu_2\beta_2}(u)^p \sigma_{\alpha_1,\alpha_2}(u)\exp\Big(\!\!-C t \sigma_{\beta_2/(\beta_2-1),\beta_1/(\beta_1-1) } \left( \frac{u}{t}\right)\Big) \frac{du}{u} \\
 \le &  c \sigma_{\alpha_1/\beta_1,\alpha_2/\beta_2 }(t) \sigma_{\nu_1,\nu_2}(t)^p,
\end{align*}
where the last estimate is obtained as before (use Lemma \ref{estimate comp 1} with $\sigma_{p\nu_1\beta_1+\alpha_1,p\nu_2\beta_2+\alpha_2} $ instead of $\sigma_{\alpha_1,\alpha_2}$) .
\end{proof}

\begin{lemma}\label{L:integral_sigma}
Let $\nu_1,\nu_2>1$. There exists $C>0$ such that for any $t> 0$ 
\begin{equation*}
    \int_0^t\frac{1}{\sigma_{\nu_1,\nu_2}(u)}\,du\leq C\sigma_{1-\nu_1,1-\nu_2}(t).
\end{equation*}
\end{lemma}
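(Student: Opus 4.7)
The plan is to reduce the claim to two explicit one-dimensional integrals by splitting on whether $t\le 1$ or $t>1$, exploiting the piecewise power-function structure of $\sigma_{\nu_1,\nu_2}$. Let me note at the outset that for the bound to be meaningful the relevant hypothesis should really read $0<\nu_1,\nu_2<1$: with $\nu_1>1$ the integrand $1/\sigma_{\nu_1,\nu_2}(u)$ behaves like $u^{-\nu_1}$ near $0$ and is not integrable, so the left-hand side is $+\infty$ while the right-hand side $\sigma_{1-\nu_1,1-\nu_2}(t)$ is finite for every fixed $t>0$. The uses of the lemma in the proof of Proposition~\ref{P:pseudoPI} (notably with $\nu_i=1-\kappa_i/\beta_i$ when $p=1$) are consistent with $\nu_i<1$, so I will proceed under that condition.

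For $t\le 1$, only the small-argument branch of $\sigma_{\nu_1,\nu_2}$ enters: since $\sigma_{\nu_1,\nu_2}(u)=u^{\nu_1}$ for $u\in(0,1]$, I would simply compute
\[
\int_0^t \frac{du}{\sigma_{\nu_1,\nu_2}(u)}=\int_0^t u^{-\nu_1}\,du=\frac{t^{\,1-\nu_1}}{1-\nu_1}=\frac{1}{1-\nu_1}\,\sigma_{1-\nu_1,1-\nu_2}(t),
\]
where the last equality uses $t\le 1$ together with $1-\nu_1>0$. The hypothesis $\nu_1<1$ is used precisely to make the integral converge at $0$ and to give a positive exponent on the right.

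For $t>1$, I would split the integral at $u=1$:
\[
\int_0^t\frac{du}{\sigma_{\nu_1,\nu_2}(u)}=\int_0^1 u^{-\nu_1}\,du+\int_1^t u^{-\nu_2}\,du=\frac{1}{1-\nu_1}+\frac{t^{\,1-\nu_2}-1}{1-\nu_2}.
\]
Since $t>1$ and $1-\nu_2>0$ imply $t^{\,1-\nu_2}\ge 1$, both summands are dominated by a constant multiple of $t^{\,1-\nu_2}=\sigma_{1-\nu_1,1-\nu_2}(t)$. Taking for example $C=\frac{1}{1-\nu_1}+\frac{1}{1-\nu_2}$ then handles both regimes uniformly.

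There is no genuine obstacle: the argument is a direct computation after the case split, and the only non-trivial ingredient is noticing that the condition $\nu_i<1$ simultaneously guarantees integrability at the origin and a positive exponent on the right-hand side, so that the scaling function $\sigma_{1-\nu_1,1-\nu_2}$ is well-defined under the conventions of Proposition~\ref{P:props_sigma}.
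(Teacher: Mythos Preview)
Your proof is correct and follows exactly the same approach as the paper: split at $t\le 1$ versus $t>1$ and compute the two elementary integrals directly. You are also right that the stated hypothesis $\nu_1,\nu_2>1$ is a typo and should read $0<\nu_1,\nu_2<1$; the paper's own computation implicitly uses $\nu_i<1$ (it writes $\int_0^t u^{-\nu_1}\,du=\frac{1}{1-\nu_1}t^{1-\nu_1}$, which only makes sense for $\nu_1<1$), and the applications in Proposition~\ref{P:pseudoPI} are indeed in that range.
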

\begin{proof}
    When $0< t\leq 1$, then
    \begin{equation*}
        \int_0^t\frac{1}{\sigma_{\nu_1,\nu_2}(u)}\,du
        =\int_0^tu^{-\nu_1}\,du= \frac{1}{1-\nu_1}t^{1-\nu_1}
    \end{equation*}
    whereas if $t>0$, then
    \begin{equation*}
        \int_0^t\frac{1}{\sigma_{\nu_1,\nu_2}(u)}\,du
        =\int_0^1u^{-\nu_1}\,du+\int_1^tu^{-\nu_2}du
        =\frac{1}{1-\nu_1}+\frac{1}{1-\nu_2}(t^{1-\nu_2}-1)\leq Ct^{1-\nu_2}.
    \end{equation*}
\end{proof}

\newpage
\bibliographystyle{amsplain}
\bibliography{vBesov_refs}

\vspace{5pt}
\noindent
\begin{minipage}{\textwidth}
    \small
    \textbf{Patricia Alonso Ruiz:} \\
    Department of Mathematics, Friedrich Schiller University Jena\\
    Email: patricia.alonso.ruiz@uni-jena.de
\end{minipage}

\vspace{10pt} 

\noindent
\begin{minipage}{\textwidth}
    \small
    \textbf{Fabrice Baudoin:} \\
    Department of Mathematics, Aarhus University \\
    Email: fbaudoin@math.au.dk
\end{minipage}

\end{document}